\DeclareMathOperator*{\pos}{pos}
\DeclareMathOperator*{\lin}{lin}
\DeclareMathOperator*{\conv}{conv}
\DeclareMathOperator*{\aff}{aff}
\DeclareMathOperator*{\inter}{int}
\newcommand{\R}{\mathbb{R}}
\newcommand{\N}{\mathbb{N}}
\newtheorem{theorem}{Theorem}[section] 
\newtheorem{corollary}[theorem]{Corollary}
\newtheorem{proposition}[theorem]{Proposition}
\newtheorem{lemma}[theorem]{Lemma}
\newtheorem{remark}[theorem]{Remark}
\newtheorem{theoman}{Theorem}[section]
\theoremstyle{definition}
\newtheorem{definition}[theorem]{Definition}
\theoremstyle{remark}
\newtheorem{example}[theorem]{Example}
\newcommand{\Sn}{\mathbb{S}^{n-1}}
\newcommand{\Pscc}{P_{\texttt{scc}}(U)}
\newcommand{\Pcone}{C_{\cv}(U)}
\newcommand{\clPcone}{\mathrm{cl}(\mathrm{conv}({C_{\cv}(U)}))}
\newcommand{\Utriangle}{U_{\Delta}}
\newcommand{\Usquare}{U_{\square}}
\newcommand{\vol}{\mathrm{vol}\,}
\newcommand{\sph}{\mathbb{S}}
\newcommand{\scc}{{\tt scc}} 
\newcommand{\cv}{{\tt cv}}
\newcommand{\ov}{\overline}
\DeclareMathOperator{\V}{V}
\DeclareMathOperator{\relint}{relint}
\let\emptyset\varnothing
\numberwithin{equation}{section}
\begin{document}

\title{On the discrete logarithmic Minkowski problem in the plane}
\author{Tom Baumbach}

\address{Technische Universität Berlin, Institut für Mathematik, Sekr.~MA4-1, Straße des 17.~Juni 136, 10623 Berlin, Germany}
\email{baumbach@math.tu-berlin.de}

       \begin{abstract}
         The paper characterizes the convex hull of the closure of the cone-volume set $C_\cv(U)$, consisting of all cone-volume vectors of polygons with outer unit normals vectors contained in $U$, for any finite set $U \subseteq \R^2, \pos(U) = \R^2$. We prove that this convex hull has finitely many extreme points by providing both a vertex representation as well as a half space representation. As a consequence, we derive new necessary conditions, which depend on $U$, for the existence of solutions to the logarithmic Minkowski problem in $\R^2$.
       \end{abstract}

\keywords{Logarithmic Minkowski problem, subspace concentration condition, cone-volume measure, matroid base polytope}	

   \maketitle

   \section{Introduction}
The setting for this paper is the $n$-dimensional Euclidean space $\R^n$. For two vectors $x,y \in \R^n$ we denote by $\langle x,y \rangle$ the standard scalar product of $x$ and $y$, and  $\| x \| = \sqrt{\langle x,x \rangle}$  denotes the associated Euclidean norm; $\sph^{n-1}=\{x\in\R^n : \| x \|=1\}$ is the $(n-1)$-sphere.  The convex hull of a non-empty set $M\subset\R^n$ is denoted by $\conv M$, and if $M$ is finite then $\conv M$ is called a polytope; expressing a polytope as $P = \conv(M)$ is referred to as its $\mathcal{V}$-representation. By a result attributed to Minkowski and Weyl, $P\subset\R^n$ is a polytope if and only if 
    \begin{equation*}
       P=P(U,b)=\{x\in\R^n : U^\intercal x\leq b\} = \bigcap_{i = 1}^m H^-(u_i,b_i)
    \end{equation*}
    for a  matrix $U=(u_1,\dots,u_m)\in(\Sn)^m$ with $\pos ( U)=\R^n$ and $b\in\R^m$. This is called the $\mathcal{H}$-representation of the polytope. 
    Here $\pos (U)$ means the positive hull, i.e., the set of all non-negative linear combinations of the column vectors $u_1,\dots,u_m\in\Sn$ of $U$.  Apparently, we may assume that the column vectors are pairwise different, and therefore we set
    \begin{equation*}
      \mathcal{U}(n,m)=\left \{U=(u_1,\dots,u_m)\in(\Sn)^m : \pos (U)=\R^n, u_i\ne u_j, i \ne j\right\}. 
    \end{equation*}

    For $1\leq i\leq m$ let
\begin{equation*} 
  F_i(b)=F(u_i,b)=P\cap\{x\in\R^n : \langle u_i,x\rangle =b_i\}
\end{equation*}
  which is always a face of $P$ and might be empty. If $\dim F_i(b)=\dim P-1$, $F_i(b)$ is called a facet of $P$. For $M\subset \R^n$ we denote by $\vol(M)$ its volume, i.e., its $n$-dimensional Lebesgue measure. If $M$ is contained in a
$k$-dimensional plane $A$, $\vol_k(M)$ refers to the $k$-dimensional Lebesgue measure with respect to $A$.


We will mainly assume that $b\geq 0$. This implies $0\in P$, and if $b > 0$ then  $0 \in \inter P$, i.e., $0$ is an interior point of $P$, and so $\dim P=n$.   
If  $F_i(b)$ is a facet of $P(U,b)$, 
then $\frac{1}{n}b_i\vol_{n-1}(F_i(b))$ is the volume of the cone (pyramid)
$\conv(\{0\}\cup F_i(b))$. As for $U\in\mathcal{U}(n,m)$, $P(U,b)$ is the interior-disjoint union of all these cones we can write
\begin{equation*}
            \vol(P(U,b))=\frac{1}{n}\sum_{i=1}^m b_i\vol_{n-1}(F_i(b)).
\end{equation*}
The cone-volume vector $\gamma(U,b)$ of $P(U,b)$ has as entries the volumes of the cones $\conv(\{0 \} \cup F_i)$
\begin{equation*}
    \gamma(U,b) = \left(\frac{1}{n}b_1\vol_{n-1}(F_1(b)),\dots,\frac{1}{n}b_m\vol_{n-1}(F_m(b)) \right),
\end{equation*}
and the cone-volume set is $C_{\cv}(U) = \{ \gamma(U,b) : b \in \R_{\geq 0}^m, \vol_n(P(U,b)) = 1 \}$, cf. \cite{baumbach2025polynomialinequalitiesconevolumespolytopes}. 

For such a $P=P(U,b)$, $\dim P=n$, we consider its cone-volume measure $\V_P$ which is  
the finite non-negative Borel measure $\V_P:\mathcal{B}(\sph^{n-1})\to\R_{\geq 0}$, where $\mathcal{B}(\sph^{n-1})$ contains all measurable subsets of $\Sn$,          
 given by
\begin{equation*}
  \V_P(\eta) =\sum_{i=1}^m \frac{b_i}{n}\vol_{n-1}(F_i(b))\,\delta_{u_i}(\eta) =\sum_{u_i\in\eta} \frac{b_i}{n}\vol_{n-1}(F_i(b)). 
\end{equation*}
Here $\eta\subseteq \sph^{n-1}$ is a Borel set and  $\delta_{u_i}(\cdot)$ is the Dirac measure in $u_i$, i.e., $\delta_{u_i}(\eta)=1$ if $u_i\in\eta$, otherwise it is $0$.  

The discrete logarithmic Minkowski (existence) problem introduced by Böröczky, Lutwak, Yang and Zhang \cite{boroczky2013logarithmic_JAMS} asks for necessary and sufficient conditions such that a finite discrete Borel measure  
\begin{equation}
  \mu:\mathcal{B}(\sph^{n-1})\to\R_{\geq 0}, \quad \mu(\eta)=\sum_{i=1}^m \gamma_i\,\delta_{u_i}(\eta)
\label{eq:measure}  
\end{equation}
with $u_i\in\sph^{n-1}$, $\gamma_i> 0$, is the cone-volume measure of a polytope.  We will denote such a measure also by  $\mu(U,\gamma)$, where $\gamma\in\R^m_{>0}$ is the vector with entries $\gamma_i$.

This discrete problem can be extended to the continuous setting, i.e., to the space of all convex bodies and  
the corresponding general logarithmic Minkowski problem is a corner stone of modern convex geometry. For its history, relevance and impact we refer to \cite{HuangYangZhang2025, BoeroeczkyLutwakYangEtAl2025, Stancu2016, BoeroeczkyHenk2016a, LiuSunXiong2021, Zhu2014, Boeroeczky2023, BoeroeczkyHegedusZhu2016, ChenLiZhu2019, boroczky2013logarithmic_JAMS} and to the references within. Here we will only focus on the discrete setting. 

In order to study the discrete logarithmic Minkowski problem in the plane, we consider the cone-volume set $C_\cv(U)$, for $U \in \mathcal{U}(2,m)$. 

The existence of solutions for the discrete planar logarithmic Minkowski problem has been investigated in special situations by Stancu \cite{stancu2002discrete,stancu2003number}. Specifically, Stancu studied the problem for special outer unit normal sets \(U\) and analyzed the number of solutions \cite{stancu2003number}. Pollehn \cite{Subspace_Concentration_of_Geometric_Measures} computed the cone-volume set for trapezoids. Liu et al. \cite{LiuLuSunEtAl2024} introduced a new necessary condition in the planar case for centered polygons and examined the solutions for quadrilaterals. 

For the even logarithmic Minkowski problem, we consider sets $U = U^s \in \mathcal{U}(n,2m)$, such that $u_i = - u_{i+m}$ for $i = 1,\dots,m$, and restrict attention to origin symmetric polytopes $P = P(U^s,b^s)$, with a symmetric right-hand side $b^s_i = b^s_{i+m}, i = 1,\dots,m$. The corresponding symmetric cone-volume set is $$C_{\cv}^s(U^s) = \{ \gamma(U^s,b^s) : b^s \in \R_{\geq 0}^{2m}, \vol_n(P(U^s,b^s)) = 1 \}.$$ Stancu \cite{stancu2002discrete} provided a complete characterization of the symmetric cone-volume set in the plane. In the following, $\relint(M)$ denotes the relative interior of $M\subseteq\R^n$, i.e., the set of interior points with respect to the ambient space given by $\aff M$, the affine hull of $M$.

\begin{theoman}\cite[Theorem 1.2.]{stancu2002discrete}
\label{thm:Stancu_Symmetric}
    Let $U^s \in \mathcal{U}(2,2m)$ and $m > 2$. Then it holds 
  \begin{equation*}
    C^s_\cv(U^s) \cap\R^{2m}_{>0}=\relint \left(P_\scc (U^s) \right)\cap 
    \left\{ x \in \R^{2m} : x_{i } = x_{m+ i},\,1\leq i\leq m \right\}.
 \end{equation*}
\end{theoman}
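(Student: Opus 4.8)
The plan is to prove the two inclusions separately, after disposing of the symmetry and normalisation constraints, which are immediate. If $P=P(U^s,b^s)$ is origin-symmetric with $b^s_i=b^s_{i+m}$, then the facet with outer normal $u_{i+m}=-u_i$ is the reflection $-F_i(b^s)$ of the facet with normal $u_i$; the two facets therefore have equal length and lie at equal distance $b^s_i=b^s_{i+m}$ from $0$, so $\gamma(U^s,b^s)_i=\gamma(U^s,b^s)_{i+m}$, and $\sum_i\gamma(U^s,b^s)_i=\vol_2(P)=1$. Hence every admissible cone-volume vector lies in the symmetric subspace $\{x:x_i=x_{m+i}\}$ and in the affine hull $\{\sum_i x_i=1\}$ of $P_\scc(U^s)$, and it remains to match the inequality constraints with the relative interior, once in each direction.

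\textbf{Necessity.} For the inclusion into $\relint(P_\scc(U^s))$ I must establish the \emph{strict} subspace concentration inequalities $\gamma_j+\gamma_{j+m}<\tfrac12\vol_2(P)$ for $1\le j\le m$; since in $\R^2$ each line through $0$ meets $\sph^1$ in exactly one antipodal pair $\{u_j,u_{j+m}\}$, these are the only nontrivial constraints. Rotating so that $u_j=(1,0)$, I would consider the slicing function $w(t)=\vol_1\!\left(P\cap\{x_1=t\}\right)$: by Brunn's concavity principle $w$ is concave on its support, and $P=-P$ makes it even, hence nonincreasing on $[0,\infty)$, so $w(t)\ge w(b^s_j)=\vol_1(F_j)$ for all $t\in[0,b^s_j]$, where $b^s_j=\max\{x_1:x\in P\}$. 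Integrating and using origin-symmetry gives
\begin{equation*}
  \tfrac12\vol_2(P)=\vol_2\!\left(P\cap\{x_1\ge 0\}\right)=\int_0^{b^s_j}w(t)\,dt\;\ge\; b^s_j\,\vol_1(F_j)=2\gamma_j=\gamma_j+\gamma_{j+m}.
\end{equation*}
Equality forces $w$ to be constant on $[-b^s_j,b^s_j]$, which makes $P$ a parallelogram and thus leaves only two antipodal pairs of normals with positive cone volume; as $m>2$ and all entries are positive, this is impossible, so the inequality is strict and $\gamma\in\relint(P_\scc(U^s))$.

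\textbf{Sufficiency.} The reverse inclusion is the existence statement and is the main obstacle. Given a symmetric $\gamma$ with $\gamma_i>0$, $\sum_i\gamma_i=1$ and $\gamma_j+\gamma_{j+m}<\tfrac12$, I would realise it variationally: over symmetric $b=b^s\ge 0$ with $\vol_2(P(U^s,b))=1$, maximise the logarithmic functional $\Phi(b)=\sum_{i=1}^{2m}\gamma_i\log b_i$. Using $\partial_{b_i}\vol_2(P(U^s,b))=\vol_1(F_i(b))$ together with $\gamma(U^s,b)_i=\tfrac12 b_i\vol_1(F_i(b))$, a maximiser with all $b_i>0$ satisfies, via Lagrange multipliers, $\gamma_i=2\lambda\,\gamma(U^s,b)_i$; the two normalisations then force $2\lambda=1$ and $\gamma(U^s,b)=\gamma$, so $P(U^s,b)$ realises the prescribed vector.

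\textbf{Main obstacle.} The crux is to show that the supremum of $\Phi$ is finite and attained with every $b_i$ strictly positive. The constraint $\vol_2=1$ rules out total collapse, and $\Phi\to-\infty$ as any $b_i\to 0$ prevents facets from disappearing at a maximiser; the genuine danger is a maximising sequence in which $P$ grows long and thin, concentrating in a single direction and driving some $b_i\to\infty$ so that $\Phi\to+\infty$. This is exactly where the \emph{strict} inequalities $\gamma_j+\gamma_{j+m}<\tfrac12$ enter: I expect them to furnish an a priori upper bound on $\Phi$ along every degenerating direction, by comparing the logarithmic growth rate of $\Phi$ against the mass $\gamma_j+\gamma_{j+m}$ that the limiting line can carry, thereby forcing coercivity and a nondegenerate maximiser. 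Controlling this degeneration uniformly is the step I expect to require the most care, and it is where the planar structure—only antipodal pairs can share a line—keeps the bookkeeping tractable.
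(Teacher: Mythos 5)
The paper itself offers no proof of this statement --- it is quoted from Stancu --- so there is nothing internal to compare against; what follows reviews your argument on its own terms. Your necessity half is sound and is the standard route: Brunn's concavity for the slicing function of an origin-symmetric body gives $\gamma_j+\gamma_{j+m}\le\tfrac12\vol_2(P)$, and the equality case forces a parallelogram, which is excluded by $m>2$ together with $\gamma>0$.

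The sufficiency half has a genuine gap, and it is not merely the coercivity step you flag as delicate: the variational principle is set up with the wrong sign, so that step cannot be repaired as stated. Along the elongating family you describe, say $P$ stretched perpendicular to the line $L=\lin\{u_j\}$, the volume constraint forces $b_j=b_{j+m}\sim 1/t$ while $b_i\sim t$ for $u_i\notin L$, hence
\begin{equation*}
\Phi(b(t))=\sum_i\gamma_i\log b_i(t)=\bigl(1-2(\gamma_j+\gamma_{j+m})\bigr)\log t+O(1).
\end{equation*}
The strict subspace concentration inequality $\gamma_j+\gamma_{j+m}<\tfrac12$ makes this coefficient \emph{positive}, so $\Phi\to+\infty$ and the supremum you propose to attain equals $+\infty$: the hypothesis you invoke is exactly the one that destroys the upper bound you hope it furnishes. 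The classical argument (Stancu; Böröczky--Lutwak--Yang--Zhang) runs the optimization the other way: one \emph{minimizes} $\sum_i\gamma_i\log b_i$ over symmetric $b$ with $\vol_2(P(U^s,b))=1$. Then the same computation shows $\Phi\to+\infty$ along every elongating sequence --- this is where the strict inequalities genuinely enter --- while if $\max_i b_i$ stays bounded by $R$, the symmetric volume-one body contains a ball of radius at least $1/(4R)$, so $\Phi\ge-\log(4R)$ and a nondegenerate minimizer exists; at the minimizer every $b_i>0$ and every facet is genuine (otherwise $\partial_{b_i}\vol_2=0$ contradicts $\gamma_i>0$ in the multiplier equation), after which your first-order computation closes the argument unchanged. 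So the scheme needs a sign flip plus the two compactness estimates; as written it fails.
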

In the preliminaries, we will recall the definition of the polytope $P_\scc(U)$, for $U\in\mathcal{U}(2,m)$, introduced in \cite{baumbach2025polynomialinequalitiesconevolumespolytopes}, which is called the subspace concentration polytope (of $U$). Up to scaling, the polytope $P_\scc(U)$ is (just) the matroid base polytope of the set of column vectors of $U$.

Theorem \ref{thm:Stancu_Symmetric} was later generalized in the groundbreaking paper \cite{boroczky2013logarithmic_JAMS}. It was shown that an even finite positive Borel measure $\mu(U,\gamma)$ is the cone-volume measure of an origin symmetric polytope $P(U,b)$ if and only if $\mu(U,\gamma)$ satisfies the subspace concentration condition (\scc), introduced by Böröczky et al. \cite{boroczky2013logarithmic_JAMS}.   A finite  discrete Borel measure $\mu=\mu(U,\gamma): \mathcal{B}(\sph^{n-1})\to\R_{\geq 0}$ with $U\in\mathcal{U}(n,m)$, $\gamma >0$, is said to satisfy the \scc\, if
\begin{enumerate}
\item for every proper linear subspace $L\subset\R^n$ it holds 
\begin{equation}
 \label{eq:scc1} 
           \mu(L)=  \sum_{u_i\in L} \gamma_i \leq \frac{\dim L}{n}\mu(\sph^{n-1}),
  \end{equation}
\item   equality holds in \eqref{eq:scc1} if and only if there exists a subspace $\ov L$ complementary to  $L$ such that $\{u_1,\dots,u_m\}\subset L\cup\ov L$.   
\end{enumerate}   

This characterization can be reformulated using the cone-volume set (see also \cite[Thm.~4.7]{LiuSunXiong2024b}). For $U^s\in\mathcal{U}(n,2m)$, we have
  \begin{equation*}
    C^s_\cv(U^s) \cap\R^{2m}_{>0}=\relint \left(P_\scc (U^s) \right)\cap  
    \left\{ x \in \R^{2m} : x_{i } = x_{m+ i},\,1\leq i\leq m \right\}.
 \end{equation*}


In the non-symmetric case the following equality holds. Consider a set $U\in\mathcal{U}(n,m)$. Then $C_\cv(U)=P_\scc(U)$ if and only if  $m=2n$ and up to renumbering we have $u_{n+i}=-u_i$, $1\leq i\leq n$ (\cite[Thm.~2.11]{baumbach2025polynomialinequalitiesconevolumespolytopes}).

\vspace{\baselineskip}

For non-symmetric discrete measures in $\mathbb{R}^2$, Stancu \cite{stancu2002discrete} provided some sufficient conditions for the existence. 

\begin{theoman}\cite[Theorem 1.1.]{stancu2002discrete}
\label{StancuSufficientConditions}
    Let $U \in \mathcal{U}(2,m)$ and $\mu: \mathcal{B}(\mathbb{S}^1) \rightarrow \mathbb{R}_{\geq 0}, \linebreak \mu(\eta) = \sum_{i = 1}^m \gamma_i \delta_{u_i}(\eta)$ with $\gamma_i > 0$ for $i =1,\dots,m$. Assume that one of the following holds:
    \begin{enumerate}
        \item[i)] $m \geq 4$ and $U$ consists of pairwise linearly independent vectors.
        \item[ii)] $m > 4$ and $U$ contains, at least, two linearly dependent vectors. For any $j,k$ with $u_j = - u_k$, we have 
        \begin{equation*}
            \gamma_j + \gamma_k < \sum_{i \neq j,k} \gamma_i.
        \end{equation*}
        \item[iii)] $m = 4$, $U$ contains a unique pair of opposite vectors, $u_1 = -u_3$, and $\gamma_1 + \gamma_3 < \gamma_2 + \gamma_4$.
    \end{enumerate}
    Then there exists a polygon $P = P(U,b)$ such that $\mu = V_P$.
\end{theoman}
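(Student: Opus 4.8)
The plan is to produce the polygon as a constrained critical point of a logarithmic functional and to extract the conditions (i)--(iii) from the way the associated cone-volume map degenerates on the boundary of its domain. Writing $W=\sum_{i}\gamma_i$, note first that any $P$ with $\V_P=\mu$ satisfies $\vol(P)=\mu(\sph^{1})=W$; since scaling $P$ by $t$ scales each cone volume by $t^{2}$, existence is \emph{equivalent} to $\tfrac1W\gamma\in\Pcone$, after which $\sqrt{W}\,P(U,b)$ is the desired polygon. I would therefore fix the open set $\mathcal M=\{b\in\R^m_{>0}:\vol(P(U,b))=1\text{ and }F_i(b)\text{ is an edge for all }i\}$, on which all $m$ vectors are genuine outer normals, and study the cone-volume map $\Gamma\colon\mathcal M\to\R^{m}_{>0}$, $\Gamma(b)_i=\tfrac12 b_i\vol_1(F_i(b))$, whose image is exactly $\Pcone\cap\R^{m}_{>0}$. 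The task becomes: under (i)--(iii), the point $\tfrac1W\gamma$ lies in this image.

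The variational input is the identity $\partial\vol(P(U,b))/\partial b_i=\vol_1(F_i(b))$. On the level set $\{\vol=1\}$ the Lagrange condition for $\Phi(b)=\sum_i\gamma_i\log b_i$ reads $\tfrac12 b_i\vol_1(F_i(b))=\lambda\gamma_i$ for a single multiplier $\lambda$; summing and using $\sum_i\tfrac12 b_i\vol_1(F_i(b))=\vol(P)=1$ forces $\lambda=1/W$, so a constrained critical point is exactly a solution of $\Gamma(b)=\tfrac1W\gamma$. Since $\Phi$ is neither bounded above nor below on $\mathcal M$ (thin polygons drive it either way), I would not seek a global extremum but instead prove that $\Gamma$ is \emph{proper} over a region containing the target and invoke a mapping-degree argument, computing the degree at a conveniently chosen reference polygon to obtain surjectivity onto the correct connected component.

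The heart of the proof is the boundary analysis, i.e.\ classifying the limits of $\Gamma(b)$ as $b\to\partial\mathcal M$. Two phenomena occur. If some $b_i\to0$ (or an edge is absorbed at finite $b_i$), then $\vol_1(F_i(b))$ stays bounded while $b_i\vol_1(F_i(b))\to0$, so the limit lands on the coordinate face $\{c_i=0\}$. The only other degeneration is that $P(U,b)$ becomes long and thin, collapsing onto a line $L$ through the origin; this forces two near-parallel long edges, hence an \emph{opposite pair} $u_j=-u_k$ with $\pos\{u_j,u_k\}=L^{\perp}$. A direct computation (the thin sides have $b\to0$ but length $\to\infty$, the caps have $b\to\infty$ but length $\to0$, all cone volumes remaining $O(1)$) shows that along such a collapse the limit of $\Gamma(b)$ is pinned to the hyperplane $\{\,c_j+c_k=\tfrac12\,\}$, a facet of $\cl(\Pcone)$ coming from the subspace-concentration inequality for $L^{\perp}$. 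This is precisely the obstruction visible in the square, where two opposite pairs pin $\Gamma\equiv(\tfrac14,\dots,\tfrac14)$ and no target off these facets is attained.

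It then remains to check that (i)--(iii) keep $\tfrac1W\gamma$ off every such limiting set, so the degree argument applies; the coordinate faces are avoided because $\gamma_i>0$. For the pair-facets: in case (i) there are no opposite pairs, hence no thin collapse and no pair-facet, so $\Gamma$ is proper and the interior target is attained once $m\geq4$ supplies a nondegenerate full-dimensional family; in case (ii) the hypothesis $\gamma_j+\gamma_k<\sum_{i\neq j,k}\gamma_i$, i.e.\ $\gamma_j+\gamma_k<\tfrac12 W$, places the target strictly on the interior side of every pair-facet, the extra normals ($m>4$) again giving room; and in case (iii) the single inequality $\gamma_1+\gamma_3<\gamma_2+\gamma_4$ does the same for the unique pair-facet $\{c_1+c_3=\tfrac12\}$ (no facet $c_2+c_4=\tfrac12$ arises, since $u_2,u_4$ are not opposite). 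In each case $\tfrac1W\gamma$ lies in the component of $\Pcone\cap\R^m_{>0}$ reached with nonzero degree, yielding $b\in\mathcal M$ with $\Gamma(b)=\tfrac1W\gamma$ and hence $\V_{\sqrt W\,P(U,b)}=\mu$. I expect the rigorous classification of \emph{all} degenerations—in particular proving that every unbounded collapse is a thin collapse onto a pair-line and identifying the exact limiting facet—to be the main technical obstacle, with the strict inequalities in (ii)--(iii) being exactly what separates the target from these facets.
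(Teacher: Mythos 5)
First, an important point of comparison: the paper does not prove Theorem \ref{StancuSufficientConditions} at all --- it is quoted from Stancu \cite{stancu2002discrete} as background --- so there is no in-paper proof to measure your argument against. Judged on its own terms, your reduction to the membership $\tfrac{1}{W}\gamma \in C_{\cv}(U)\cap\R^m_{>0}$, the Euler--Lagrange identification of solutions with constrained critical points of $\sum_i\gamma_i\log b_i$, and the classification of degenerations (coordinate faces when a cone volume vanishes; thin collapses, which force an opposite pair $u_j=-u_k$ and pin the limit to $\{c_j+c_k=\tfrac12\}$) are all sound and consistent with what the paper proves elsewhere (Lemma \ref{Lemma:PconeSubsetKU}, Corollary \ref{corollary:ConeVolumeUSquareLessOneHalf}, Theorem \ref{Theorem:ConeVolumeSetTrapezoids}).

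The genuine gap is the degree computation, which you defer to ``a conveniently chosen reference polygon'' and never carry out --- and the paper's own Theorem \ref{Theorem:ConeVolumeSetTrapezoids} shows it cannot be waved through. For a trapezoid ($m=4$, $u_1=-u_3$ the unique opposite pair), the convex, hence connected, region $\Omega'=\{c>0,\ \sum_i c_i=1,\ c_1+c_3>\tfrac12\}$ is a single component of the complement of your pinning hyperplane, yet it contains both attained targets (e.g.\ $(0.01,0.22,0.55,0.22)$, which satisfies condition (ii) of Theorem \ref{Theorem:ConeVolumeSetTrapezoids}) and unattained ones (e.g.\ $(0.05,0.05,0.85,0.05)$, where $\gamma_2+\gamma_4<2\sqrt{\gamma_1\gamma_3}$). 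Hence no degree that is constant on components of your ``good region'' can detect the solutions in $\Omega'$: it must vanish there, the preimages merging and cancelling across the fold $\gamma_2+\gamma_4=2\sqrt{\gamma_1\gamma_3}$ (consistent with the multiplicity phenomena in \cite{stancu2003number}). So ``proper over a component containing the target, plus some solution exists in that component'' does not imply surjectivity; for cases (i)--(iii) you must actually prove the degree is nonzero on the specific component singled out by the hypotheses, e.g.\ by exhibiting a regular value with a single correctly oriented preimage, and nothing in the sketch does this. Relatedly, your reason for abandoning the variational route (unboundedness of $\Phi$) is an artifact of fixing the origin: the standard arguments first optimize over the position of the origin --- $\xi\mapsto\sum_i\gamma_i\log\bigl(b_i-\langle u_i,\xi\rangle\bigr)$ is strictly concave on the interior of $P$ --- and the resulting two-level extremal problem is coercive precisely under the strict inequalities in (i)--(iii), which is where those hypotheses actually enter.
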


By definition, for $U\in\mathcal{U}(2,m)$, the cone-volume set $C_\cv(U)$ is a subset of $\{x\in\R^{m}: x\geq 0, \, x_1+x_2+\dots + x_m=1\}$ and it can be that large by Theorem \ref{StancuSufficientConditions} i). This was later generalized by Zhu \cite[Thm.]{Zhu2014}.
Let  $U\in\mathcal{U}(n,m)$ be in general position, i.e., any $n$ columns of $U$ are linearly independent. Then 
  \begin{equation*}
    \begin{split} 
      C_\cv(U) \cap\R^m_{>0} &= \left\{x\in\R^{m}:  x>0,\, x_1+x_2+\dots + x_{m}=1\right\}\\ & = \conv\{e_1,\dots,e_m\}\cap\R^m_{>0} .
   \end{split}   
  \end{equation*}

By a result of Chen et al. \cite{ChenLiZhu2019}, we also have the following inclusion, which is a generalization of Theorem \ref{StancuSufficientConditions} ii).
Let $U\in\mathcal{U}(n,m)$. Then it holds
  \begin{equation*}
    C_\cv(U)\cap\R^m_{>0} \supseteq \relint P_\scc(U).
  \end{equation*}


    

In the general setting, we know that the cone-volume set $C_\cv(U)$ is a semialgebraic set \cite[Theorem 1.3]{baumbach2025polynomialinequalitiesconevolumespolytopes} and we could theoretically compute necessary and sufficient conditions in the (non-symmetric) discrete logarithmic Minkowski problem. However, this approach is not very practical, since we need to make use of the Tarski-Seidenberg principle \cite[Theorem 2.2.1]{bochnak2013real}. 




Further, in general the cone-volume set $C_\cv(U)$ is neither convex nor closed \cite[Proposition 2.9]{baumbach2025polynomialinequalitiesconevolumespolytopes}. Therefore, we study the extreme points of the convex hull of the closure of the cone-volume set $C_\cv(U)$, denoted by $\clPcone$, where we restrict ourselves to the sets $U \in \mathcal{U}(2,m)$. 

For the characterization of the extreme points of the set $\clPcone$ we consider the following subsets of $U$.

\begin{definition}
\label{Definition:OuterNormalsInUsquareAdjacentFacets}
    Let $U \in \mathcal{U}(2,m)$. Then we define the following three sets
    \begin{align}
        \Utriangle & \coloneqq \{ u \in U : \text{ there are vectors } v,w \in U \text{ s.t. } \pos\{u,v,w\} = \R^2 \}, \\
        \Usquare & \coloneqq U \setminus \Utriangle \text{ and }   \\
        U_{\square,u} & \coloneqq \{ x,y \in U :  \pos\{u,-u,x,y\} = \R^2 \} \text{ for } u \in \Usquare.
    \end{align}
\end{definition}

We are ready to state the $\mathcal{V}$-representation of $\clPcone$.

\begin{restatable}{theorem}{VRepresentation}
    \label{thm:VRepresentationOfclPconePlanar}
    Let $U \in \mathcal{U}(2,m)$. The vertices of $\clPcone$ are given by 
    \begin{equation}
         \bigcup_{i : u_i \in \Utriangle} \{ e_i \} \cup \bigcup_{(i,j) : u_i \in \Usquare, u_j \in U_{\square,u_i} } \left\{ \frac{1}{2}(e_i + e_j) \right\} .
    \end{equation}
\end{restatable}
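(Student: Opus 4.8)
The plan is to prove the two inclusions $\conv(\text{candidates})\subseteq\clPcone$ and $\clPcone\subseteq\conv(\text{candidates})$ and, separately, that every listed point is genuinely a vertex. Throughout I use that each $\gamma(U,b)$ lies in the simplex $\{x\ge 0:\sum_i x_i=1\}$ and decomposes as $\gamma(U,b)=\sum_i\gamma_i e_i$ through the cone dissection $P=\bigcup_i\conv(\{0\}\cup F_i)$, so that $\clPcone$ is contained in that simplex and it suffices to locate the extreme points there.

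First, \emph{membership}. For $u_i\in\Utriangle$ pick $v,w\in U$ with $\pos\{u_i,v,w\}=\R^2$ and take triangles $P(U,b)$ with exactly these three facets; sliding the origin toward the vertex opposite the facet with normal $u_i$ forces $b_i\,\vol_1(F_i)$ to capture the whole area while the two remaining cones vanish, so $\gamma(U,b)\to e_i$ and $e_i\in\cl(\Pcone)$. For $u_i\in\Usquare$ I first record that $-u_i\in U$: otherwise $-u_i\in\relint\pos\{v,w\}$ for some $v,w\in U$, producing a triangle through $u_i$ and contradicting $u_i\in\Usquare$. Given $u_j\in U_{\square,u_i}$ I build a quadrilateral whose facet normals are $u_i,u_j$ together with two further vectors of $U$, arranged so that $u_i$ and $u_j$ are cyclically adjacent; letting the origin tend to the vertex shared by the other two facets makes their cones vanish, and tuning the shape so the diagonal through the opposite vertex bisects the area gives $\gamma(U,b)\to\tfrac12(e_i+e_j)$.

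Second, \emph{extremality}. Each $e_i$ with $u_i\in\Utriangle$ is a vertex of the ambient simplex, hence an extreme point of every convex subset containing it. For a point $\tfrac12(e_i+e_j)$ consider the linear form $x_i+x_j$: among all listed points its value is at most $1$, with equality only at $\tfrac12(e_i+e_j)$ itself and — when $u_j\in\Utriangle$ — at $e_j$ (note $e_i$ is absent from the list since $u_i\in\Usquare$). On this exposed face the two surviving candidates are separated by the form $x_i$, which equals $\tfrac12$ at $\tfrac12(e_i+e_j)$ and $0$ at $e_j$; hence $\tfrac12(e_i+e_j)$ is exposed and therefore extreme.

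Third, and hardest, \emph{completeness}: $\clPcone\subseteq\conv(\text{candidates})$. Equivalently, for every weight vector $\omega$ one must show $\sup\{\langle\omega,\gamma(U,b)\rangle\}\le M:=\max_{c}\langle\omega,c\rangle$ over the candidates $c$, the reverse inequality being immediate from the first step. The $\Utriangle$-part $\sum_{u_i\in\Utriangle}\gamma_i e_i$ is already a convex combination of listed vertices, so the entire difficulty is the mass carried by $\Usquare$-facets. Here one needs structural inequalities comparing the cone-volume of a $\Usquare$-facet with the transversal facets: writing $u_i=e_1\in\Usquare$, one checks that apart from the forced facet $-u_i$ all normals of $P$ lie in a closed arc of length at most $\pi$ about $u_i$, and combining this clustering with the concavity of the slice-width should yield a balancing bound of the form $\gamma_i\le\sum_{u_j\in U_{\square,u_i}}\gamma_j$. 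I expect this balancing estimate — and the bookkeeping that turns it into an explicit coupling writing $\gamma=\sum_i\lambda_i e_i+\sum_{i,j}\mu_{ij}\tfrac12(e_i+e_j)$ with nonnegative weights summing to $1$ — to be the main obstacle, all the more because the transversal bound alone does \emph{not} cut out $\conv(\text{candidates})$: already for $U=\{\pm e_1,\pm e_2\}$ one has $\gamma_1+\gamma_2\equiv\tfrac12$, so it must be supplemented by the line-concentration bound $\gamma_i+\gamma_{i'}\le\tfrac12$ on antipodal $\Usquare$-pairs $u_{i'}=-u_i$. Combining both families of inequalities, verifying that their only common vertices are the listed points, and matching these with the limit points attained in the first step completes the argument.
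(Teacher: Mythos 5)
Your overall strategy---sandwiching $\conv(\text{candidates})\subseteq\clPcone\subseteq\{$polytope cut out by linear inequalities$\}$ and then checking that the outer polytope has no vertices beyond the listed points---is exactly the route the paper takes (it introduces an $\mathcal{H}$-polytope $K_U$, proves $\clPcone\subseteq K_U$, and enumerates the vertices of $K_U$). Your membership and extremality arguments are essentially sound, and your proposed balancing bound $\gamma_i\le\sum_{u_k\in U_{\square,u_i}}\gamma_k$ is in fact equivalent to the paper's key inequality $\gamma_i+\tfrac12\gamma_j\le\tfrac12$ for $u_j=-u_i$, because $U_{\square,u_i}=U\setminus\{\pm u_i\}$ whenever $u_i\in\Usquare$ (for any $x\notin\lin(u_i)$ the condition $\pos(U)=\R^2$ forces some $y\in U$ strictly on the other side of $\lin(u_i)$). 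The supplementary antipodal bound you add is only relevant when both $u_i,-u_i\in\Usquare$, which by Remark \ref{Remark:StrauctureUSquare} happens only for parallelograms, where it holds with equality automatically.

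The genuine gap is that the two steps you yourself flag as ``the main obstacle'' are precisely the substance of the theorem and are not carried out. First, the balancing bound is asserted to follow from ``clustering and concavity of the slice-width,'' but no proof is given; the paper proves it by packing into $P(U,b)$ a parallelogram of area $2\gamma_i$ (built from the facet $F_i$, its antipodal translate through the origin, and the hemisphere property of Lemma \ref{Lemma:StructureUSquare}) interior-disjoint from the cone of area $\gamma_j$ over the facet with normal $-u_i$, giving $2\gamma_i+\gamma_j\le\vol(P)=1$; a separate monotonicity lemma (Lemma \ref{Lemma:DeltingAjacentFacetInceasreConeVolume}: deleting a facet adjacent to $F_i$ and rescaling can only increase $\gamma_i$) reduces the bound $\gamma_i\le\tfrac12$ to the trapezoid case. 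Second, ``verifying that their only common vertices are the listed points'' is a nontrivial vertex enumeration of the resulting $\mathcal{H}$-polytope (the paper does this via the active-constraint criterion, with a separate treatment of the parallelogram case); you state it as a to-do rather than performing it. Until both are supplied, the completeness inclusion $\clPcone\subseteq\conv(\text{candidates})$---and hence the theorem---remains unproved.
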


Further, we also provide a $\mathcal{H}$-representation of $\clPcone$.

\begin{restatable}{theorem}{HRepresentation}
\label{thm:HRepresentationOfclPconePlanar}
    Let $U \in \mathcal{U}(2,m)$. Then
    \begin{align}
    \clPcone = & \left( \bigcap_{i: u_i \in \Utriangle} H^-(e_i,1) \right) \cap \left( \bigcap_{(i,j) : u_i \in \Usquare, - u_j = u_i} H^- \left(e_i + \frac{1}{2} e_j, \frac{1}{2} \right) \right) \\
     & \cap \left( \bigcap_{(i,k) : u_i \in \Usquare, u_k \in U_{\square,u_i}} H^-(e_i + e_k , 1)  \right) \cap \left( \bigcap_{i = 1}^m H^-(-e_i,0) \right) \\ 
     & \cap H(e_1 + \dots + e_m, 1),
\end{align}
\end{restatable}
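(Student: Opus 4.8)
The plan is to derive the $\mathcal{H}$-representation from the $\mathcal{V}$-representation established in Theorem \ref{thm:VRepresentationOfclPconePlanar}. Write $V$ for the explicit vertex set listed there, so that $\clPcone = \conv(V)$, and write $Q$ for the polyhedron on the right-hand side of the asserted identity. Since both sets lie in the hyperplane $\{x : \langle e_1 + \dots + e_m, x\rangle = 1\}$, it suffices to prove the two inclusions $\conv(V) \subseteq Q$ and $Q \subseteq \conv(V)$.

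For the first inclusion, $Q$ is an intersection of half-spaces with one hyperplane, hence convex, so I only need to check that every $v \in V$ satisfies each defining constraint of $Q$. The equality and the nonnegativity constraints are immediate from the shape of the two vertex types $e_i$ (for $u_i \in \Utriangle$) and $\tfrac{1}{2}(e_i + e_j)$ (for $u_i \in \Usquare$, $u_j \in U_{\square,u_i}$). For the remaining three families one runs a short case analysis according to whether the index of the half-space meets the support of $v$. The two structural inputs that make this clean are: (a) $u_i \in \Usquare$ forces $-u_i \in U$, since otherwise the gap of $\pos(U)$ containing the direction $-u_i$ would have angular width below $\pi$, placing $-u_i$ strictly inside a pointed cone of two elements of $U$ and thus exhibiting $u_i \in \Utriangle$; and (b) a crossing partner $u_j \in U_{\square,u_i}$ is never $\pm u_i$. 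Recording the coincidences then shows that each inequality of families (2) and (3) is not merely valid but tight on an $(m-2)$-dimensional set of vertices, which simultaneously proves the inclusion and identifies these constraints as facet-defining.

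For the reverse inclusion I would argue through the vertices of $Q$. As $Q$ is contained in the simplex $\{x \ge 0 : \langle e_1 + \dots + e_m, x\rangle = 1\}$ it is bounded, hence a polytope, and it is enough to show that every vertex of $Q$ lies in $\conv(V)$. Fix a vertex $\hat{x}$ with support $S = \{i : \hat{x}_i > 0\}$; within the defining hyperplane a vertex requires $|S|-1$ independent active inequalities from families (1)--(3) beyond the active nonnegativities $x_l = 0$, $l \notin S$. After substituting the vanishing coordinates, an active family-(2) or family-(3) constraint either pins a single surviving coordinate or collapses the support, and using (a), (b) again I expect the only surviving possibilities to be $\hat{x} = e_i$ with $u_i \in \Utriangle$ and $\hat{x} = \tfrac{1}{2}(e_i + e_j)$ with $u_i \in \Usquare$, $u_j \in U_{\square,u_i}$, i.e. exactly $V$; supports of size at least three, and $e_i$ with $u_i \in \Usquare$, are to be excluded because they violate a family-(2) inequality or are convex combinations of the admissible vertices.

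The main obstacle is this reverse inclusion, and inside it the antipodal family-(2) constraints. The delicate case is a square direction $u_i$ together with its antipode $u_{i'} = -u_i$: when $u_{i'} \in \Utriangle$ the pair contributes an honest facet, but when $u_{i'} \in \Usquare$ as well the two antipodal inequalities must cut $Q$ down to a lower-dimensional affine hull, as already happens for an axis-parallel rectangle, where every cone-volume vector satisfies $x_i + x_{i'} = \tfrac{1}{2}$. Checking that the listed family-(2) half-spaces, together with the simplex equality, reproduce exactly this affine hull and create no spurious vertices is where the real work lies; I would isolate it as a lemma describing, for each square direction, which family-(2) and family-(3) inequalities can be simultaneously active, and I would cross-check the resulting facet list against the matroid base / subspace-concentration description of $P_\scc(U)$ recalled in the introduction.
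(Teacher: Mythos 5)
Your overall architecture is close to the paper's: the paper also reduces everything to the polyhedral identity between the listed half-space description (there called $K_U$) and the listed vertex set $V$, and it computes the vertices of $K_U$ via the active-constraint criterion exactly as you propose for $Q$. The structural difference is that the paper does not take the $\mathcal{V}$-representation as input; it proves $\clPcone\subseteq K_U$ directly from the geometry of cone-volume vectors (for $u_i\in\Usquare$ with $u_j=-u_i$ it packs a parallelogram of area $2\gamma_i$ and a cone of area $\gamma_j$ interior-disjointly into $P(U,b)$ to obtain $\gamma_i+\tfrac{1}{2}\gamma_j\le\tfrac{1}{2}$), then shows the vertices of $K_U$ lie in $\mathrm{cl}(C_\cv(U))$, and deduces both Theorem \ref{thm:VRepresentationOfclPconePlanar} and Theorem \ref{thm:HRepresentationOfclPconePlanar} from the sandwich $\conv(V)\subseteq\clPcone\subseteq K_U=\conv(V)$. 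Since the paper's only proof of Theorem \ref{thm:VRepresentationOfclPconePlanar} is this same sandwich, your plan to derive the $\mathcal{H}$-representation from the $\mathcal{V}$-representation is not circular but saves nothing: the entire burden still rests on the reverse inclusion $Q\subseteq\conv(V)$, which you leave as an expectation rather than a proof.

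That reverse inclusion contains a genuine problem at exactly the spot you flag. When $u_i$ and $u_{i'}=-u_i$ both lie in $\Usquare$ (by Corollary \ref{Corollary:StructureUSquare} this happens only when $U=\{\pm u,\pm v\}$ is a parallelogram), the two family-(2) inequalities $x_i+\tfrac{1}{2}x_{i'}\le\tfrac{1}{2}$ and $x_{i'}+\tfrac{1}{2}x_i\le\tfrac{1}{2}$ only yield $x_i+x_{i'}\le\tfrac{2}{3}$; they do \emph{not} cut $Q$ down to the affine hull $x_i+x_{i'}=\tfrac{1}{2}$ as your plan requires. Concretely, for $U=\{e_1,e_2,-e_1,-e_2\}$ the point $(0.4,\,0.3,\,0.05,\,0.25)$ satisfies every constraint defining $Q$, yet $x_1+x_3=0.45\ne\tfrac{1}{2}$, so it does not lie in $\conv(V)=\clPcone$. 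Hence $Q\subseteq\conv(V)$ fails for parallelograms and your argument cannot be completed there as written; one must either adjoin the equalities $x_i+x_{i'}=\tfrac{1}{2}$ in that case or exclude it from the statement. (The paper's Lemma \ref{Lemma:VRepresentationOfKU} disposes of this case with a one-line appeal to the already-established structure of parallelograms and never verifies $K_U=\conv(V)$ there, so your instinct that this is where the argument is fragile is exactly right.) In the remaining cases, where $\Usquare$ contains no antipodal pair, your support-based case analysis can be pushed through and coincides with the paper's vertex computation.
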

In general, the $\mathcal{H}$-representation of $\clPcone$ is redundant: not all inequalities define facets (cf.\ Proposition \ref{Prop:VRepresentationTrapezoid} and the subsequent discussion).

The paper is structured as follows. In Section \ref{Section:Preliminaries} we recall the definition of the subspace concentration polytope $P_\scc(U)$ and the cone-volume set $C_\cv(U)$ as well as their basic properties. In Section \ref{SectionPlanarCase} we analyze the structure of the subsets $\Utriangle$, $\Usquare$ and $U_{\square,u}$. Using the results about the structure of these sets, we develop the theory in order to characterize the closure of the convex hull of the cone-volume set for sets $U \in \mathcal{U}(2,m)$, which gives us all the necessary linear inequalities for the logarithmic Minkowski problem.

   \section{Preliminaries}
\label{Section:Preliminaries}
This paper builds upon the work presented in \cite{baumbach2025polynomialinequalitiesconevolumespolytopes}. For the sake of completeness, we revisit its preliminaries, focusing only on the planer case.
In this section, we recall the basic properties of the subspace concentration polytope as well as the cone-volume set. 

Additionally, we restrict our attention to finite sets of \( \mathbb{R}^2 \) and \( \mathbb{S}^1 \), ensuring a well-defined combinatorial and geometric context for all discussions and results.


For a set $U \in \mathcal{U}(2,m)$ we define the subspace concentration set, as in \cite{baumbach2025polynomialinequalitiesconevolumespolytopes}.  
With $S \subseteq U$, we refer to a subtuple of columns in $U$ and by $\mathrm{rk}(S)$ we denote the dimension of the linear span of $S$ with respect to the columns.

There are two important sets regarding the matroid polytope, corresponding to the two different cases in $\scc$. Both of these sets consist of subsets of $U$:
\begin{enumerate}[label = \roman*)]
    \item $\mathcal{L}(U) \coloneqq \Bigl\{ S \subseteq U :  \mathrm{rk}(S) = 1 \text{ and } U \cap \lin(S) = S \Bigl\}$
    \item $\mathcal{F}(U) \coloneqq \Bigl\{ S \in \mathcal{L}(U) : \lin(S) \cap \lin(U \setminus S) = \{ 0 \} \Bigl\}$.
\end{enumerate}
Since we only work with sets $U \in \mathcal{U}(2,m)$, we have the following characterization.

\begin{proposition}
    Let $U \in \mathcal{U}(2,m)$. Then $\mathcal{L}(U) \neq \emptyset$ if and only if $U = \{ \pm u, \pm v \}$ for some $u,v \in \mathbb{S}^1$.
\end{proposition}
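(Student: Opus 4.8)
The statement is an equivalence, so the plan is to treat the two implications separately. The geometric input I will use repeatedly is that a one-dimensional subspace of $\R^2$ meets $\mathbb{S}^1$ in exactly two antipodal points: thus any $S\subseteq U$ with $\mathrm{rk}(S)=1$ satisfies $S\subseteq\{u,-u\}$ with $\lin(S)=\lin\{u\}$, and $U\cap\lin(S)$ is again a subset of $\{u,-u\}$.

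The implication ``$\Leftarrow$'' is the easy one. Assuming $U=\{u,-u,v,-v\}$, the four vectors are distinct, so $u,v$ are linearly independent and $v,-v\notin\lin\{u\}$. Then $S=\{u,-u\}$ has $\mathrm{rk}(S)=1$ and $U\cap\lin(S)=\{u,-u\}=S$, so $S\in\mathcal{L}(U)$ and $\mathcal{L}(U)\neq\emptyset$.

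For ``$\Rightarrow$'' I would fix $S\in\mathcal{L}(U)$, write $\ell=\lin(S)=\lin\{u\}$, so $U\cap\ell=S\subseteq\{u,-u\}$ and $U\setminus S$ lies off $\ell$. The strategy has two steps: (1) show $U\setminus S$ is contained in a single line $\ell'=\lin\{v\}$, giving $U\subseteq\{u,-u,v,-v\}$; and (2) use $\pos(U)=\R^2$ to force all four vectors to be present. Step (2) is routine case analysis: if, say, $-v\notin U$ then $U\subseteq\{u,-u,v\}$ and $\pos(U)\subseteq\{a u+b v:a\in\R,\ b\geq 0\}$, a closed half-plane, contradicting $\pos(U)=\R^2$; repeating for each omitted vector yields $U=\{u,-u,v,-v\}$.

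I expect Step (1) --- confining $U\setminus S$ to one line --- to be the main obstacle, since it does not follow from $\mathrm{rk}(S)=1$ and $U\cap\lin(S)=S$ by themselves. The argument must exploit the way $S$ enters the subspace concentration polytope, namely through the complementary condition $\lin(S)\cap\lin(U\setminus S)=\{0\}$ that distinguishes the equality case of the \scc\ (this is exactly the extra requirement defining $\mathcal{F}(U)$). Granting this condition, $\lin(U\setminus S)$ cannot contain $\ell$, so it is neither $\R^2$ nor $\ell$; as $U\setminus S\neq\emptyset$ (otherwise $U\subseteq\ell$ and $\pos(U)\subseteq\ell$), a nonzero subspace of $\R^2$ meeting $\ell$ only at the origin must be a complementary line $\ell'\neq\ell$, and $U\setminus S\subseteq\ell'$ as required. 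Pinning down precisely which structure carried by membership in $\mathcal{L}(U)$ supplies this confinement, and thereby closing the gap between the bare flat condition and the complementary condition, is the step I would scrutinise most carefully.
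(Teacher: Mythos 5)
Your argument follows the same route as the paper's own proof: the backward direction by direct verification, and the forward direction by confining $S$ to $\{\pm u\}$ and $U\setminus S$ to $\{\pm v\}$ and then using $\pos(U)=\R^2$ to force all four vectors to be present. The obstacle you single out in Step (1) is genuine, and you are right to refuse to derive it from membership in $\mathcal{L}(U)$ alone: with the definition $\mathcal{L}(U)=\{S\subseteq U:\mathrm{rk}(S)=1,\ U\cap\lin(S)=S\}$ given in the preliminaries, the forward implication is in fact false. For a triangle, say $U=\{e_1,e_2,-(e_1+e_2)/\sqrt{2}\}\in\mathcal{U}(2,3)$, the set $S=\{e_1\}$ has rank one and equals $U\cap\lin(S)$, so $\mathcal{L}(U)\neq\emptyset$ although $U$ is not of the form $\{\pm u,\pm v\}$. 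The paper's proof glosses over exactly this point: it asserts without justification that $S\in\mathcal{L}(U)$ yields $U\setminus S\subseteq\{\pm v\}$, which is precisely the confinement you could not obtain. The statement and the paper's argument become correct once one reads $\mathcal{F}(U)$ in place of $\mathcal{L}(U)$, i.e.\ once the complementarity condition $\lin(S)\cap\lin(U\setminus S)=\{0\}$ is imposed; this reading is consistent with how the result is used later, since reducibility is defined via $\mathcal{F}(U)\neq\emptyset$ and Proposition \ref{PropositionSeparatorsPsccAndConeVolumesDimension} treats exactly the four-vector case. Under that reading your completion of Step (1) --- $\lin(U\setminus S)$ is a nonzero subspace meeting $\ell$ only in the origin, hence a complementary line containing $U\setminus S$ --- and your half-plane case analysis in Step (2) are correct and complete.
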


\begin{proof}
     First, it is clear that $\{ \pm v \} \in \mathcal{L}(U)$ if $U = \{ \pm u \} \cup \{ \pm v \}$.
    
    For the other direction, consider a set $U$ such that $S \in \mathcal{L}(U)$. Then there exists vectors $u,v \in \mathbb{S}^1$ such that $S \subseteq \{ \pm u \}$ and $U \setminus S \subseteq \{ \pm v\}$. Since it holds 
    \begin{equation*}
        \R^2 = \pos(U),
    \end{equation*}
    we get that $U = \{ \pm u, \pm v \}$. 
\end{proof}

The subspace concentration polytope is defined as the set 
\begin{align}
    P_\scc(U) \coloneqq \, & \Bigl\{ x \in \R^m : x_i \geq 0, \sum_{i = 1}^m x_i = 1, \\
    & \sum_{i : u_i \in \lin S} x_i \leq \frac{ rk(S)}{n}, \text{ for all } S \in \mathcal{L}(U) \Bigl\},
\end{align}
which has a close connection with the subspace concentration condition and the cone-volume set, cf. \cite{boroczky2013logarithmic_JAMS,BoeroeczkyHegedusZhu2016,Zhu2014,ChenLiZhu2019}.
The subspace concentration polytope $P_\scc(U)$ is a special case of matroid polyhedra \cite{chatelain2011matroid}, where the matroid consists of $U$ and $\mathcal{I}$, the set that contains all linear independent subsets of $U$.

The set $P_\scc(U)$ is a polytope, meaning $P_\scc(U)$ can be expressed with half spaces, directly coming from its definition, or vertices. The $\mathcal{V}-$representation of $P_\scc(U)$ was already established in the papers \cite{cunningham1984testing,chatelain2011matroid}, which we are going to recall 
\begin{equation}
    P_\scc(U) = \conv\left\{ \frac{1}{2} \sum_{u_i \in B} e_i : B \subseteq U \text{ is a linear basis of } \R^2\right\}.
\end{equation}
Further, the dimension of the matroid polytope was computed and proven in \cite{FeichtnerSturmfels2005}.

Next, we consider the cone-volume set in the plane. For $U\in\mathcal{U}(2,m)$ and  $b\in \R^m_{\geq 0}$ the cone-volume vector is
\begin{equation*}
  \gamma(U,b)=\frac{1}{2}\Big(b_1\vol_{1}(F(u_1)),\dots, b_m\vol_{1}(F(u_m))\Big)^\intercal\in \R^m_{\geq 0}. 
\end{equation*}
Observe that some of its entries might be zero, if $F_i(b)$ is not an edge of the polygon $P(U,b)$ or $b_i=0$. The cone-volume set is 
\begin{equation*}
  C_\cv(U)=\left\{ \gamma(U,b) : b\in\R^m_{\geq 0} \text{ and } \vol_2(P(U,b))=1\right\}.
\end{equation*} 
Any cone-volume vector $\gamma(U,b)$ of a $2$-dimensional polygon of the type $P(U,b)$ is up to scaling to volume 1 an element of $C_\cv(U)$ as   
\begin{equation}
 \frac{1}{\vol_2(P(U,b))}\gamma(U,b) = \gamma\left(U, \left(\vol_2(P(U,b)) \right)^{-1/2}b\right)\in C_\cv(U).
 \label{eq:scaling} 
\end{equation}

The subspace concentration polytope $P_\scc(U)$ as well as the cone-volume set $C_\cv(U)$ can be decomposed by considering so called \textit{separators} of $U$, which are special elements of $\mathcal{F}(U)$. 

\begin{definition}
    \label{Definition:IrreducibleSets}
    Let $U \in \mathcal{U}(2,m)$. We call the set $U$ irreducible if $\mathcal{F}(U) = \emptyset$, otherwise reducible.
\end{definition}

Furthermore, for a separator $S \in \mathcal{F}(U)$ we must have $S = \lin(S) \cap U$, which is also known as $S$ is closed in matroid language.
Observe that the number of separators of a given set $U$ is finite, and thus the number of irreducible separators is finite as well. Using the separators, we obtain a much simpler representation of the subspace concentration polytope $P_\scc(U)$.

From matroid theory, we deduce the following statement \cite[Theorem 6.81]{Aigner1997}.

\begin{proposition}\cite[Proposition 2.3 + 2.7]{baumbach2025polynomialinequalitiesconevolumespolytopes}
\label{PropositionSeparatorsPsccAndConeVolumesDimension}
Let $U \in \mathcal{U}(2,m)$ be reducible. Then, up to renumbering, it holds 
\begin{equation*}
   P_\scc(U)= \left\{ x \in \R_{\geq 0}^4 : x_1 + x_2 = \frac{1}{2} \right\} \, \cup \, \left\{ x \in \R_{\geq 0}^4 : x_3 + x_4 = \frac{1}{2} \right\},
 \end{equation*}
 as well as 
 \begin{equation*}
   C_\cv(U)= \left\{ \gamma \in \R_{\geq 0}^4 : \gamma_1 + \gamma_2 = \frac{1}{2} \right\} \, \cup \, \left\{ \gamma \in \R_{\geq 0}^4 : \gamma_3 + \gamma_4 = \frac{1}{2} \right\}.
 \end{equation*}
\label{prop:sccdirect}
\end{proposition}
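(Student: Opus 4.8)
The plan is to first pin down the combinatorial type of $U$ and then treat the two identities separately; both will rest on the fact that a reducible $U$ consists of two antipodal pairs, so that every $P(U,b)$ is a parallelogram. I would begin by noting that reducibility means $\mathcal{F}(U)\neq\emptyset$, and since $\mathcal{F}(U)\subseteq\mathcal{L}(U)$ this gives $\mathcal{L}(U)\neq\emptyset$. The preceding Proposition then forces $U=\{\pm u,\pm v\}$ for some $u,v\in\mathbb{S}^1$, and $\pos(U)=\R^2$ makes $u,v$ linearly independent, so $m=4$. I would renumber the columns as $u_1=u$, $u_2=-u$, $u_3=v$, $u_4=-v$, so that $\mathcal{L}(U)=\{\{u_1,u_2\},\{u_3,u_4\}\}$ and both members are separators (each has rank $1$ and $\lin(S)\cap\lin(U\setminus S)=\{0\}$).

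For the $P_\scc$ identity I would substitute these two rank-one sets into the definition of $P_\scc(U)$: since $u_i\in\lin S$ exactly for $i\in\{1,2\}$ (resp.\ $i\in\{3,4\}$), they contribute precisely the inequalities $x_1+x_2\le\tfrac12$ and $x_3+x_4\le\tfrac12$ on top of $x\ge0$ and $\sum_i x_i=1$. Adding the two inequalities and using $\sum_i x_i=1$ shows both must be tight, so every point satisfies $x_1+x_2=\tfrac12=x_3+x_4$; the reverse inclusion is immediate. On the hyperplane $\sum_i x_i=1$ the two equalities are equivalent, which is how the displayed union is to be read. As an independent check I would invoke the $\mathcal{V}$-representation: the four bases $\{u_i,u_j\}$ with $i\in\{1,2\}$, $j\in\{3,4\}$ give the vertices $\tfrac12(e_i+e_j)$, whose convex hull is exactly this square.

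For the cone-volume identity I would write $P(U,b)=\{x:-b_2\le\langle u,x\rangle\le b_1,\ -b_4\le\langle v,x\rangle\le b_3\}$, a parallelogram, and use that the edges $F_1,F_2$ with normals $\pm u$ are parallel of common length $\ell_u$ with supporting lines at distance $b_1+b_2$, so that $\vol_2(P(U,b))=\ell_u(b_1+b_2)$ and hence $\gamma_1+\gamma_2=\tfrac12\ell_u(b_1+b_2)=\tfrac12\vol_2(P(U,b))$. After rescaling to unit area as in \eqref{eq:scaling} this equals $\tfrac12$, and symmetrically $\gamma_3+\gamma_4=\tfrac12$, giving $C_\cv(U)\subseteq P_\scc(U)$. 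For the converse I would, given a target $\gamma$ in the set, choose each $b_i$ proportional to $\gamma_i$ so that $b_1+b_2$ and $b_3+b_4$ carry the right ratio, then apply a single positive rescaling to normalize the area and recover $\gamma$.

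The steps are conceptually clear, so the main effort I anticipate is bookkeeping rather than any genuine obstacle. The area--length identity must be verified for a general, non-orthogonal parallelogram so that the fixed factor coming from the angle between $u$ and $v$ cancels after normalization, and the degenerate cases $b_i=0$ (a collapsed edge contributing $\gamma_i=0$) must be checked to remain inside the claimed set in both directions. The only mildly subtle point is phrasing the conclusion so that the displayed union matches $\{x_1+x_2=\tfrac12\}\cap\{x_3+x_4=\tfrac12\}$, which is automatic once the normalization $\sum_i x_i=1$ is kept in force.
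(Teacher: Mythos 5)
Your argument is correct, and it is worth noting that the paper itself does not prove this proposition at all: it is imported verbatim from the cited reference and attributed to general matroid theory, so there is no in-paper proof to match. Your route is the natural direct one and it holds up. Reducibility gives $\mathcal{F}(U)\neq\emptyset$, hence $\mathcal{L}(U)\neq\emptyset$, and the preceding proposition in the preliminaries forces $U=\{\pm u,\pm v\}$ with $u,v$ linearly independent; from there the two rank-one constraints in the definition of $P_\scc(U)$ sum to the normalization $\sum_i x_i=1$ and must both be tight, and the parallelogram computation $\gamma_1+\gamma_2=\tfrac12\ell_u(b_1+b_2)=\tfrac12\vol_2(P(U,b))$ together with the rescaling trick \eqref{eq:scaling} handles $C_\cv(U)$ in both directions. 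Two small points. First, the displayed union in the statement is, read literally, an unbounded set; your reading that both sets are implicitly intersected with $\{x\geq 0,\ \sum_i x_i=1\}$, on which the two equalities coincide, is the only one consistent with the definitions, and it is good that you flagged it explicitly. Second, in the converse direction the choice $b_i=c\gamma_i$ scales the cone-volume vector by $c^2/(4|\sin\theta|)$ where $\theta$ is the angle between $u$ and $v$, so a single choice of $c$ simultaneously recovers $\gamma$ and normalizes the area (since $\vol_2(P(U,b))=\sum_i\gamma_i(U,b)$); writing that one line out would close the only gap between your sketch and a complete proof.
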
  

Therefore, the reducible sets are of a very special form, which we use in Section \ref{SectionPlanarCase}.

Finally, we remark that $P_\scc(U)$ and $C_\cv(U)$ are in particular
linear invariant which will be used later on.
\begin{lemma}\cite[Proposition 2.1]{baumbach2025polynomialinequalitiesconevolumespolytopes}
\label{LemmaPsccAndPconeWithLinearMap}Let $U$ as above and let $A\in\mathrm{GL}(2,\R)$.  Then
  $P_\scc(AU)=P_\scc(U)$ and $C_\cv(U) = C_\cv(AU)$.
\label{prop:invariant}  
\end{lemma}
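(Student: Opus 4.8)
The plan is to treat the two invariance statements separately, handling $P_\scc$ combinatorially and $C_\cv$ through a careful volume computation. For $P_\scc(AU) = P_\scc(U)$ I would invoke the $\mathcal{V}$-representation
\begin{equation*}
    P_\scc(U) = \conv\left\{ \tfrac{1}{2} \sum_{u_i \in B} e_i : B \subseteq U \text{ is a linear basis of } \R^2\right\}
\end{equation*}
recalled above. The only data entering this representation is, for each index subset of $\{1,\dots,m\}$, whether the corresponding columns form a linear basis of $\R^2$. Since $A$ is invertible, $\{u_i : i\in B\}$ is a basis if and only if $\{Au_i : i \in B\}$ is, and normalizing each $Au_i$ to unit length is a further positive scaling of individual vectors that preserves linear independence. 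Hence the two matroids on the ground set $\{1,\dots,m\}$ coincide, the indexing sets $B$ are identical, and the vertex sets — and therefore the polytopes — agree.

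The substance lies in $C_\cv$. I would first record how $P(U,b)$ transforms under $A$. Substituting $y = Ax$ in the definition of $P(U,b)$ gives
\begin{equation*}
    A\,P(U,b) = \{ y \in \R^2 : \langle A^{-\intercal}u_i, y\rangle \le b_i,\ i = 1,\dots,m\} = P(\hat U, \hat b),
\end{equation*}
where $\hat u_i = A^{-\intercal}u_i / \lambda_i$, $\hat b_i = b_i/\lambda_i$ and $\lambda_i = \|A^{-\intercal}u_i\| > 0$. One checks that $\hat U \in \mathcal{U}(2,m)$ (invertibility together with positive scaling preserves distinctness of directions and $\pos(\hat U) = \R^2$) and that $\hat b \ge 0$. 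Since $A$ is a linear bijection fixing the origin and mapping faces to faces of the same dimension, the facet of $P(U,b)$ with normal $u_i$ is carried onto the facet of $P(\hat U,\hat b)$ with normal $\hat u_i$ (and a non-facet stays a non-facet), and the cone $\conv(\{0\}\cup F(u_i,b))$ is mapped by $A$ onto $\conv(\{0\}\cup F(\hat u_i,\hat b))$.

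The crucial point is that $A$ multiplies every planar area by the same factor $|\det A|$, so that $\gamma_i(\hat U,\hat b) = |\det A|\,\gamma_i(U,b)$ for every $i$, i.e.\ $\gamma(\hat U,\hat b) = |\det A|\,\gamma(U,b)$ as a whole vector; in particular the \emph{direction} of the cone-volume vector is unchanged. Now take any $\gamma(U,b) \in C_\cv(U)$, so $\vol_2(P(U,b)) = 1$. Then $\vol_2(P(\hat U,\hat b)) = |\det A|$, and rescaling the right-hand side by $|\det A|^{-1/2}$ via the scaling relation \eqref{eq:scaling} yields
\begin{equation*}
    \gamma\bigl(\hat U,\, |\det A|^{-1/2}\hat b\bigr) = |\det A|^{-1}\gamma(\hat U,\hat b) = \gamma(U,b),
\end{equation*}
a point of $C_\cv(\hat U)$. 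Hence $C_\cv(U) \subseteq C_\cv(\hat U)$, and since $A^{-\intercal}$ runs over all of $\mathrm{GL}(2,\R)$ as $A$ does, the reverse inclusion follows by symmetry; relabeling $A^{-\intercal}$ as $A$ (and absorbing the normalization into the meaning of $AU$) gives $C_\cv(U) = C_\cv(AU)$.

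The main obstacle is purely one of bookkeeping: $A$ does not preserve unit-length normals, so one must normalize $A^{-\intercal}u_i$ and correspondingly rescale the right-hand side, then verify that the resulting cone volumes all scale by the \emph{same} factor $|\det A|$. This uniform scaling — as opposed to an entry-dependent one — is precisely what upgrades the evident affine equivalence of the polygons into a literal equality of the volume-normalized sets $C_\cv(U)$ and $C_\cv(AU)$.
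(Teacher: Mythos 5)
Your argument is correct. Note, however, that this paper does not prove the lemma at all: it is imported by citation from \cite[Proposition 2.1]{baumbach2025polynomialinequalitiesconevolumespolytopes}, so there is no in-text proof to compare against. Your route is the natural one and all the delicate points are handled properly --- the matroid of bases is preserved under $A$ (giving the $P_\scc$ claim via the $\mathcal{V}$-representation), the substitution $y=Ax$ turns $P(U,b)$ into $P(\hat U,\hat b)$ with $\hat u_i = A^{-\intercal}u_i/\|A^{-\intercal}u_i\|$, every cone $\conv(\{0\}\cup F_i(b))$ scales in area by the \emph{uniform} factor $|\det A|$, and the renormalization to volume $1$ via \eqref{eq:scaling} together with the bijectivity of $A\mapsto A^{-\intercal}$ on $\mathrm{GL}(2,\R)$ yields the two inclusions.
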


    \section{Planar Case}
\label{SectionPlanarCase}

In this section we focus on the planar case, i.e. we consider the case $n = 2$. The convex body $\clPcone$ is shown to be in fact a polytope and a complete $\mathcal{V}$-representation and $\mathcal{H}$-representation is given. 

First, we will look at trapezoids, where the cone-volume set was completely characterized in \cite[Theorem 2.14]{Subspace_Concentration_of_Geometric_Measures}, providing a complete characterization of the setting in Theorem \ref{StancuSufficientConditions} iii).

\setcounter{theoman}{2}

\begin{theoman}
\label{Theorem:ConeVolumeSetTrapezoids}
    Let $\mu$ be a non-zero, finite Borel measure on $\mathbb{S}^1$ supported on pairwise distinct and counterclockwise ordered unit vectors $u_1,u_2,u_3,u_4 \in \mathbb{S}^1$.
    Further, we assume that $u_1 = -u_3$ and there is an open hemisphere $u_3 \in \omega \subset \mathbb{S}^1$ such that $u_2,u_4 \not \in \omega$ and $u_2 \not = -u_4$.
    
     The measure $\mu$ is a cone-volume measure, i.e. there exists a polygon $P$ with outer normal set contained in $U = \{ u_1,u_2,u_3,u_4 \}$ such that $V_P = \mu$, if and only if either 
    \begin{enumerate}[label = (\roman*) ]
        \item $\mu(u_1) + \mu(u_3) < \mu(u_2) + \mu(u_4)$, or 
        \item $\mu(u_1) + \mu(u_3) \geq \mu(u_2) + \mu(u_4) \geq 2 \sqrt{\mu(u_1) \mu(u_3)}$ and $\mu(u_1) < \mu(u_3)$.
    \end{enumerate}
\end{theoman}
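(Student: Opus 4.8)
The plan is to reduce the problem to an explicit two-parameter family of polygons and then to invert the cone-volume map. By the linear invariance of the cone-volume set (Lemma \ref{LemmaPsccAndPconeWithLinearMap}), I may apply a suitable $A \in \mathrm{GL}(2,\R)$ to normalize the configuration: since $u_1 = -u_3$, I would send the line $\lin\{u_1\}$ to a coordinate axis and arrange $u_1 = (0,1)^\intercal$, $u_3 = (0,-1)^\intercal$. The hypotheses that $u_2,u_4 \notin \omega$ (the open hemisphere around $u_3$) and $u_2 \neq -u_4$ guarantee that $u_2,u_4$ have strictly positive last coordinate after normalization and are not antipodal, so $P(U,b)$ is genuinely a trapezoid: two parallel edges with normals $u_1,u_3$ and two non-parallel edges with normals $u_2,u_4$. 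Writing $b = (b_1,b_2,b_3,b_4)$, I would compute the four vertices of $P(U,b)$ explicitly as intersections of adjacent supporting lines, then write down the edge lengths $\vol_1(F_i(b))$ and hence the four cone-volumes $\gamma_i = \tfrac12 b_i \vol_1(F_i(b))$ as explicit rational functions of $b$.

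Next I would impose the normalization $\vol_2(P(U,b)) = 1$ and study the image of the resulting map $b \mapsto \gamma(U,b)$. Because $\sum_i \gamma_i = 1$ is automatic and the trapezoid has only effectively two shape degrees of freedom after fixing area (one from the ratio $b_1/b_3$ of the parallel supporting distances, one from a shearing/scaling of the slanted edges), the cone-volume vectors trace out a two-dimensional set inside the simplex. The key algebraic step is to express the two conditions in the theorem directly in terms of $\gamma_1 = \mu(u_1)$, $\gamma_3 = \mu(u_3)$ and $\gamma_2 + \gamma_4 = \mu(u_2)+\mu(u_4)$. I expect the quantity $\gamma_1\gamma_3$ to enter through the product of the two parallel supporting distances: matching a prescribed pair of opposite cone-volumes forces a quadratic relation in $b_1,b_3$, and the discriminant of that quadratic is precisely what produces the threshold $\mu(u_2)+\mu(u_4) \geq 2\sqrt{\mu(u_1)\mu(u_3)}$ together with the ordering $\mu(u_1) < \mu(u_3)$ selecting the admissible (degenerate-edge-avoiding) root.

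For the two regimes I would argue separately. In case (i), $\gamma_1 + \gamma_3 < \gamma_2 + \gamma_4$, the target lies in the interior region where both slanted edges survive and a genuine non-degenerate trapezoid realizes $\mu$; here I expect realizability to follow either from Stancu's sufficient condition (Theorem \ref{StancuSufficientConditions} iii), whose hypothesis $\gamma_1 + \gamma_3 < \gamma_2 + \gamma_4$ is exactly this inequality) or directly from solvability of the explicit equations with strictly positive edge lengths. In case (ii), when $\gamma_1 + \gamma_3 \geq \gamma_2 + \gamma_4$, the boundary of the trapezoid is being pushed toward a triangle: one of the slanted edges can vanish, and I would show the constraint $\gamma_2 + \gamma_4 \geq 2\sqrt{\gamma_1 \gamma_3}$ is exactly the AM–GM-type feasibility bound for the system to have a real solution with $b \geq 0$, while $\gamma_1 < \gamma_3$ fixes which of the two parallel edges carries the larger cone, selecting the admissible root and ruling out the spurious one.

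The main obstacle will be the case-(ii) boundary analysis: correctly identifying the discriminant condition $2\sqrt{\mu(u_1)\mu(u_3)}$ as the exact threshold, checking that the selected root yields $b_i \ge 0$ with the right edges possibly degenerating to zero length, and verifying that the degenerate limit (where one slanted edge disappears and $P$ becomes a triangle) is correctly included via the closure. I would also need to confirm that no \emph{other} value of $b$ produces the same $\gamma$ outside these regions, i.e.\ that the characterization is sharp in both directions. The algebra of eliminating $b$ to obtain a clean inequality in $\gamma_1,\gamma_3,\gamma_2+\gamma_4$ is where the careful bookkeeping lies; everything else is a matter of tracking signs and degeneracies.
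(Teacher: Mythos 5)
The paper does not prove Theorem \ref{Theorem:ConeVolumeSetTrapezoids} at all: it is an imported result, quoted verbatim from Pollehn \cite[Theorem~2.14]{Subspace_Concentration_of_Geometric_Measures} and used as a black box (e.g.\ in Proposition \ref{Prop:VRepresentationTrapezoid} and Corollary \ref{corollary:ConeVolumeUSquareLessOneHalf}). So there is no in-paper argument to compare yours against; I can only assess your plan on its own merits, and on those terms it is sound and essentially the standard route. After normalizing $u_1=(0,1)$, $u_3=(0,-1)$ and writing $\ell_i=\vol_1(F_i(b))$, the identities $\gamma_1+\gamma_3=\tfrac12(b_1\ell_1+b_3\ell_3)$ and $\gamma_2+\gamma_4=\tfrac12(b_1\ell_3+b_3\ell_1)$ give necessity almost immediately: $(\gamma_2+\gamma_4)-(\gamma_1+\gamma_3)=\tfrac12(b_1-b_3)(\ell_3-\ell_1)$ with $\ell_3>\ell_1$ forced by the Minkowski relation and $u_2\neq -u_4$, AM--GM yields $\gamma_2+\gamma_4\geq 2\sqrt{\gamma_1\gamma_3}$ unconditionally, and $b_1\leq b_3$ together with $\ell_1<\ell_3$ forces $\gamma_1<\gamma_3$ in the regime of case (ii). Your discriminant heuristic is exactly right: $b_1\ell_3$ and $b_3\ell_1$ are the roots of $t^2-2(\gamma_2+\gamma_4)t+4\gamma_1\gamma_3=0$, whose reality is the threshold condition. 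Case (i) sufficiency is indeed literally Theorem \ref{StancuSufficientConditions}~iii).

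Two caveats. First, your description of case (ii) as the regime where ``one of the slanted edges can vanish and $P$ becomes a triangle'' is off: since $\mu$ has positive mass on all four normals, the realizing polygon must keep all four edges in both cases; what varies is the position of the origin and the ratio $b_1/b_3$ relative to $\ell_1/\ell_3$, and the boundary $\gamma_2+\gamma_4=2\sqrt{\gamma_1\gamma_3}$ corresponds to $b_1/b_3=\ell_1/\ell_3$, not to a degeneration. Second, the sufficiency of case (ii) --- actually producing $b\in\R^4_{>0}$ realizing a prescribed admissible $\gamma$, including matching $\gamma_2$ and $\gamma_4$ individually via the remaining horizontal degree of freedom, and checking the selected root is geometrically admissible --- is stated as a plan rather than carried out; this is the genuinely laborious half and would need to be written down in full for the proposal to count as a proof.
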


\medskip
We will examine the situation of parallelograms and trapezoids in great detail in order to be able to provide $\mathcal{V}$-representations and $\mathcal{H}$-representations in the planar case for the convex hull $\clPcone$. This allows us to get a better understanding of the cone-volume sets of triangles, trapezoids, and parallelograms, which is essential in order to understand the extreme points of the convex hull of general cone-volume sets. 

First, we consider the situation of Theorem \ref{Theorem:ConeVolumeSetTrapezoids} and want to find a $\mathcal{V}$- and $\mathcal{H}$-representation of the set $\clPcone$.
From these inequalities, it is clear that the cone-volume set $\Pcone$ is not convex, as the inequalities describing the cone-volume set involve inequalities with the square root. If we draw it in the 3-dimensional space, it can be partitioned into two subsets, one which corresponds to $P_\scc(U)$ and another one, which fulfills the square root inequality. In order to visualize the cone-volume set $\Pcone$, which has dimension $3$, we projected it into $\R^3$. 
The subset contained in $\Pcone$ corresponding to inequality (i) contains the subspace concentration polytope $P_\scc(U)$, implying $\Pscc \subset \Pcone$ and thus $P_\scc(U) \subset \clPcone$.

\begin{figure}[h]
        \centering
        \begin{subfigure}{0.45\textwidth}
        \center
            \includegraphics[scale = 0.45]{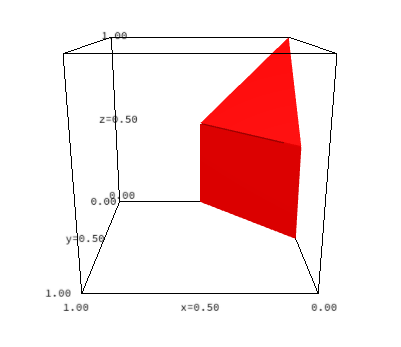}
        \caption{Case (i) of Theorem \ref{Theorem:ConeVolumeSetTrapezoids}}
       
        \end{subfigure}
        \begin{subfigure}{0.45\textwidth}
        \center
            \includegraphics[scale = 0.45]{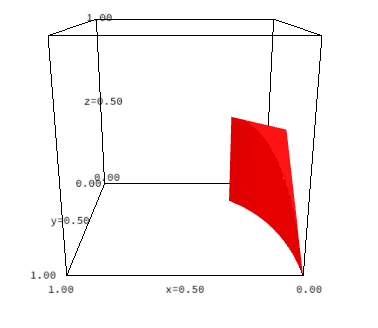}
            \caption{Case (ii) of Theorem \ref{Theorem:ConeVolumeSetTrapezoids}}
        \end{subfigure}
        \caption{The cone-volume set $\Pcone$ is the union of the subsets represented in (A) and (B). Let $\gamma \in C_\cv(U)$. The $x-$axis corresponds to $\gamma_{1}$, the $y-$axis to $\gamma_{3}$ and the $z-$axis to $\gamma_2$. The corresponding vector in $\Pcone$ is given via the formula $(\gamma_1,\gamma_2,\gamma_3,1-\gamma_1-\gamma_2-\gamma_3)$.}
        \label{FigurePconeTrapezoid}
\end{figure}

We provide visual proofs of the following propositions to build intuition; fully rigorous proofs could be derived from the ideas in this section, though they would require extensive computations.

\begin{proposition}
\label{Prop:VRepresentationTrapezoid}
    Let $U = \{ u_1,u_2,u_3,u_4 \} \subset \mathbb{S}^1$ be a counterclockwise ordered set of unit vectors with $u_1 = - u_3$ and $\omega \subset \mathbb{S}^1$ an open hemisphere such that $U \cap \omega = \{ u_3 \}$ and $u_2 \neq u_4$. Then 
    \begin{equation*}
        \clPcone = \conv\left\{e_2,e_3,e_4,\frac{1}{2}(e_1 + e_2),\frac{1}{2}(e_1 + e_4)\right\}
    \end{equation*}
    and 
    \begin{align*}
        \clPcone = & \bigcap_{i = 2}^4 H^-(e_i,1) \cap  H^-\left(e_1 + \frac{1}{2} e_3,\frac{1}{2}\right) \cap H^-(e_1 + e_2,1) \\ \cap & \, H^-(e_1 + e_4,1) \cap H(e_1 + e_2 + e_3 + e_4,1) \cap \bigcap_{i = 1}^4 H^+(e_i,0).
    \end{align*}
\end{proposition}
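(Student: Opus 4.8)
The plan is to derive everything from Theorem \ref{Theorem:ConeVolumeSetTrapezoids}, which describes $C_{\cv}(U)\cap\R^4_{>0}$. Writing $\gamma=(\gamma_1,\gamma_2,\gamma_3,\gamma_4)$ with $\gamma_1+\gamma_2+\gamma_3+\gamma_4=1$, condition (i) becomes $\gamma_1+\gamma_3<\tfrac12$, and condition (ii) becomes $\gamma_1+\gamma_3\ge\tfrac12$, $\gamma_1\le\gamma_3$, and $\gamma_2+\gamma_4\ge 2\sqrt{\gamma_1\gamma_3}$. Set $R_i=\{\gamma\ge 0:\sum\gamma_i=1,\ \gamma_1+\gamma_3\le\tfrac12\}$ and $R_{ii}=\{\gamma\ge 0:\sum\gamma_i=1,\ \gamma_1+\gamma_3\ge\tfrac12,\ \gamma_1\le\gamma_3,\ \gamma_2+\gamma_4\ge 2\sqrt{\gamma_1\gamma_3}\}$. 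First I would establish $\cl(C_{\cv}(U))=R_i\cup R_{ii}$. The inclusion "$\supseteq$" uses that the strict regions (i) and (ii) are dense in $R_i$ and $R_{ii}$; for "$\subseteq$" one must also account for degenerate polygons, i.e.\ points of $C_{\cv}(U)$ with some $\gamma_i=0$. Here the only triple of normals that positively spans $\R^2$ is $\{u_2,u_3,u_4\}$ (every triple containing $u_1=-u_3$ lies in a closed half-plane, and $\{u_1,u_2,u_4\}$ lies in the closed hemisphere $\mathbb{S}^1\setminus\omega$), so the only genuinely degenerate polygons are triangles with $\gamma_1=0$; and the whole face $\{\gamma_1=0\}$ lies in $R_i\cup R_{ii}$, since there $2\sqrt{\gamma_1\gamma_3}=0\le\gamma_2+\gamma_4$.

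Since $C_{\cv}(U)$ is bounded, $\clPcone=\conv(\cl(C_{\cv}(U)))=\conv(R_i\cup R_{ii})$, so it suffices to compute this convex hull. Let $Q$ be the polytope on the right of the claimed $\mathcal{V}$-representation and $Q_H=\{\gamma\ge 0:\sum\gamma_i=1,\ \gamma_1+\tfrac12\gamma_3\le\tfrac12\}$. The heart of the argument is the single inequality $\gamma_1+\tfrac12\gamma_3\le\tfrac12$. On $R_i$ it is immediate from $\gamma_1+\tfrac12\gamma_3\le\gamma_1+\gamma_3\le\tfrac12$. On $R_{ii}$ it follows from the square-root constraint: since $\gamma_3\ge\gamma_1\ge 0$ we get $\gamma_2+\gamma_4\ge 2\sqrt{\gamma_1\gamma_3}\ge 2\gamma_1$, hence $2\gamma_1+\gamma_3\le 3\gamma_1+\gamma_3\le(\gamma_1+\gamma_3)+(\gamma_2+\gamma_4)=1$. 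This yields $R_i\cup R_{ii}\subseteq Q_H$, so $\clPcone\subseteq Q_H$. Tracing equality shows $\gamma_1+\tfrac12\gamma_3=\tfrac12$ forces $\gamma=e_3$ on $R_{ii}$, and the segment from $\tfrac12(e_1+e_2)$ to $\tfrac12(e_1+e_4)$ on $R_i$; this is precisely why the curved boundary of $R_{ii}$ contributes only the single new extreme point $e_3$.

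For the reverse inclusion I would check that the five points $e_2,e_3,e_4,\tfrac12(e_1+e_2),\tfrac12(e_1+e_4)$ lie in $\cl(C_{\cv}(U))$: the first, third, fourth and fifth are limits of points satisfying the strict inequality (i), and $e_3$ is a limit of points on the boundary $\gamma_2+\gamma_4=2\sqrt{\gamma_1\gamma_3}$ of (ii) (equivalently a vertex of the cone-volume set of the triangle $\{u_2,u_3,u_4\}$). Hence $Q\subseteq\clPcone$. A direct computation then identifies $Q$ with $Q_H$: the five points satisfy the defining inequalities of $Q_H$, and conversely $Q_H$ is the pyramid with apex $e_3$ over the quadrilateral $\{\gamma_3=0\}\cap Q_H=\conv\{e_2,e_4,\tfrac12(e_1+e_2),\tfrac12(e_1+e_4)\}$, so its vertices are exactly these five points. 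Combining the inclusions gives $\clPcone=Q=Q_H$, the $\mathcal{V}$-representation. The stated $\mathcal{H}$-representation then follows by verifying that its remaining inequalities $\gamma_2,\gamma_3,\gamma_4\le 1$ and $\gamma_1+\gamma_2,\gamma_1+\gamma_4\le 1$ are consequences of $\gamma\ge 0$ and $\sum\gamma_i=1$, hence redundant, so adjoining them to $Q_H$ changes nothing.

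The main obstacle is the closure identity $\cl(C_{\cv}(U))=R_i\cup R_{ii}$: one must rigorously rule out that degenerate configurations or the non-convex square-root boundary produce points outside $R_i\cup R_{ii}$, or extreme points other than $e_3$. Once the clean estimate $\gamma_2+\gamma_4\ge 2\sqrt{\gamma_1\gamma_3}\ge 2\gamma_1$ is in hand, the linear facet $\gamma_1+\tfrac12\gamma_3\le\tfrac12$ and the convexification are forced, and the remainder is bookkeeping; but making the closure and the "no further extreme points" claims fully rigorous is exactly where the extensive computations mentioned in the text would be spent.
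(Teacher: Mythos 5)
Your proposal is essentially correct, but it follows a genuinely different route from the paper. The paper's own proof of this proposition is explicitly a \emph{visual} one: it reads the three inequalities involving $e_1$ off Figure \ref{FigurePconeTrapezoid}, notes that $\{u_2,u_3,u_4\}$ is a simplicial positive basis to obtain $e_2,e_3,e_4$, and defers the rigorous justification to the general machinery developed afterwards (the facet-deletion monotonicity of Lemma \ref{Lemma:DeltingAjacentFacetInceasreConeVolume}, the parallelogram-plus-triangle volume comparison in Lemma \ref{Lemma:PconeSubsetKU}, and the vertex computation of $K_U$ via Barvinok's criterion). You instead derive everything from Pollehn's characterization (Theorem \ref{Theorem:ConeVolumeSetTrapezoids}): the key step is the estimate $\gamma_2+\gamma_4\geq 2\sqrt{\gamma_1\gamma_3}\geq 2\gamma_1$ on region (ii), which together with the trivial bound on region (i) yields the single nontrivial facet $\gamma_1+\tfrac12\gamma_3\leq\tfrac12$; identifying $Q_H$ as the simplex $\conv\{e_1,\dots,e_4\}$ truncated at $e_1$ (the cutting hyperplane passing through $e_3$) then gives the vertex list, and this is correct and arguably more self-contained for this specific proposition than what the paper offers. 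The one point you should tighten is the reduction to Theorem \ref{Theorem:ConeVolumeSetTrapezoids}: that theorem, as stated, characterizes measures with all four weights positive, whereas $C_\cv(U)$ also contains cone-volume vectors of genuine quadrilaterals with the origin on the boundary (some $b_i=0$, hence $\gamma_i=0$ for $i\in\{2,3,4\}$), which are not triangles; you only treat the triangle degenerations $\gamma_1=0$. These remaining boundary points are handled by a short continuity argument (perturb the origin into the interior and rescale, then use that $R_i\cup R_{ii}$ is closed), or avoided altogether by the paper's direct volume-comparison argument in Lemma \ref{Lemma:PconeSubsetKU}, which proves $\gamma_1+\tfrac12\gamma_3\leq\tfrac12$ for every $b\geq 0$ without invoking Theorem \ref{Theorem:ConeVolumeSetTrapezoids} at all.
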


\begin{proof}
    We are interested in a $\mathcal{H}$-representation of $\clPcone$. 
    Let us represent $U$ in the sphere, Figure \ref{figure:UinSphere}. 
 \begin{figure}
     \centering
     \begin{tikzpicture}
     \begin{scope}

        \filldraw[color=blue!60, fill = blue!5, very thick](0,0) circle (1);
        \draw[->,very thick](0,0) -- (0,1) node[above]{$u_1$};
        \draw[->,very thick](0,0) -- (-0.94,0.31) node[left]{$u_2$};
        \draw[->,very thick](0,0) -- (0,-1) node[below]{$u_3$};
        \draw[->,very thick](0,0) -- (0.98,0.16) node[right]{$u_4$};
        \draw [red,thick,domain=180:360] plot ({cos(\x)}, {sin(\x)});
        \end{scope}

       \begin{scope}[xshift=4cm]
       \draw[black] (1.18,-1.0) -- (0.85,1.0) -- (-0.73,1.0) -- (-1.39,-1.0) -- cycle;
       \filldraw[color = orange!60, fill = orange!30] (1/2,-1/4) -- (0.85,1.0) -- (0.06,1) node[black, above]{$\gamma_1$} -- (-0.73,1.0)   -- cycle;
       \filldraw[color = purple!60, fill = purple!30] (1/2,-1/4) -- (0.85,1.0) -- (1,0) node[black, right]{$\gamma_4$} --  (1.18,-1.0)  -- cycle;
       \filldraw[color = red!60, fill = red!30] (1/2,-1/4) -- (1.18,-1.0) -- (0.1,-1) node[black, below]{$\gamma_3$} -- (-1.39,-1.0)  -- cycle;
       \filldraw[color = blue!60, fill = blue!30] (1/2,-1/4) -- (-1.39,-1.0) -- (-1.05,0) node[black, left]{$\gamma_2$} --   (-0.73,1.0) -- cycle;
           
       \end{scope} 
       \begin{scope}[xshift=8cm]
       
       \draw[black, scale = 0.5] (1.18,-2.0) node[right]{$v_1$}  -- (-1.39,-2.0) node[left]{$v_2$} -- (0.33,4.22) node[above]{$v_3$} -- cycle;
       \filldraw[color = blue!60, fill = blue!30, scale = 0.5] (-1/2,-1) -- (-1.39,-2.0) -- (-0.503,1.11) node[black, left]{$\gamma_2$} -- (0.33,4.22)  -- cycle;
       \filldraw[color = purple!60, fill = purple!30, scale = 0.5] (-1/2,-1) -- (0.33,4.22) -- (0.75,1.11) node[black, right]{$\gamma_4$} -- (1.18,-2)  -- cycle;
       \filldraw[color = red!60, fill = red!30, scale = 0.5] (-1/2,-1) -- (1.18,-2) -- (0.1,-2) node[black, below]{$\gamma_3$} -- (-1.39,-2.0)  -- cycle;
           
       \end{scope} 
    \end{tikzpicture}
     \caption{The set $U$ drawn in the sphere with a open hemisphere $\omega \subset \mathbb{S}^1$  colored in red. The set $U$ allows us to construct trapezoids and triangles with the outer normals in $U$. Each $\gamma_i$ represents the cone-volume corresponding to the facet defined by $u_i$.}
     \label{figure:UinSphere}
 \end{figure}
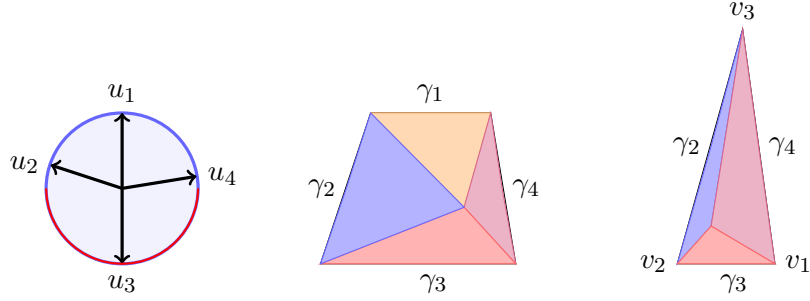
 From the setting in Theorem \ref{Theorem:ConeVolumeSetTrapezoids} we see that the set $\{ u_2,u_3,u_4 \}$ is a simplicial positive basis of $\R^2$. Therefore, there are polygons $P_2 = P(U,b_2),P_3=P(U,b_3),P_4=P(U,b_4)$ with the outer normal set $\{ u_2,u_3,u_4 \}$ and cone-volume vectors $e_2,e_3$ and $e_4$, respectively. For example, for the vertex $e_2$,  take the triangle from Figure \ref{figure:UinSphere} and shift it in such a way that the vertex $v_1$ is the origin and rescale it such that the volume is equal to $1$.
 Thus, the inequalities for the cone-volumes corresponding to $u_2,u_3$ and $u_4$ are $H^-(e_2,1)$, $H^-(e_3,1)$ and $H^-(e_4,1)$ and each inequality is also attained. In order to characterize $\clPcone$, we further need the inequalities for the cone-volume corresponding to the vector $u_1$. If we look at Figure \ref{FigurePconeTrapezoid} we compute that the inequalities are precisely 
 \begin{equation}
     H^-\left(e_1 + \frac{1}{2} e_3,\frac{1}{2}\right), \quad H^-(e_1 + e_2,1) \quad \text{ and } \quad H^-(e_1 + e_4,1).
 \end{equation}
 These inequalities are only obtained in the limit, as we will see later in Corollary \ref{Corollary:ConvergenceOfTrapezoidsAgaindsParallelotoptes}.
 We are able to represent $\clPcone$ as an intersection of halfspaces:
 \begin{align}
     \clPcone = & \bigcap_{i = 2}^4 H^-(e_i,1) \cap  H^-\left(e_1 + \frac{1}{2} e_3,\frac{1}{2}\right) \cap H^-(e_1 + e_2,1) \\ & \cap H^-(e_1 + e_4,1) \cap H(e_1 + e_2 + e_3 + e_4,1) \cap \bigcap_{i = 1}^4 H^+(e_i,0).
 \end{align}
 With the $\mathcal{H}$-representation established, we can compute the $\mathcal{V}$-representation:
 \begin{equation}
     \clPcone = \conv\left\{e_2,e_3,e_4,\frac{1}{2}(e_1 + e_2),\frac{1}{2}(e_1 + e_4)\right\}.
 \end{equation}
\end{proof}

We can see the vertices of $\clPcone$ in Figure \ref{FigurePconeTrapezoid}. The vertices correspond to the points $(0,1,0),(0,0,1),(0,0,0),(\frac{1}{2},\frac{1}{2},0)$ and $(\frac{1}{2},0,0)$, respectively. The $\mathcal{H}$-representation provided in Proposition \ref{Prop:VRepresentationTrapezoid} of $\clPcone$ is redundant: it consists of eleven inequalities, whereas $\clPcone$ only has five facets; see Figure \ref{FigurePconeTrapezoid}. 
Nevertheless, since $\dim(C_{\cv}(U)) =m-1 =3$ \cite[Proposition~2.7]{baumbach2025polynomialinequalitiesconevolumespolytopes}, and since both the vertex set and an $\mathcal{H}$-representation are available, a non-redundant $\mathcal{H}$-representation can be obtained by determining which inequalities support an $(m-2)$-dimensional face of $\clPcone$, that is, which inequalities actually define facets with respect to the given vertices. 
This approach, together with Theorem \ref{thm:VRepresentationOfclPconePlanar} and Theorem \ref{thm:HRepresentationOfclPconePlanar}, applies to any $U \in \mathcal{U}(2,m)$.

Notice that in the $\mathcal{V}$-representation of $\clPcone$, we have the vertices $e_i$ if and only if $u_i$ defines a triangle with two other vectors contained in $U$. This will also be made rigorous later. Further, the vertices $\frac{1}{2}(e_i + e_j)$ correspond to trapezoids such that $- u_i \in U$ and $u_j$ are the outer normals that define facets which meet the facet defined by $u_i$.
That is no coincidence, as we will later see. 

\vspace{\baselineskip}

Lastly, we briefly discuss the the situation $u_2 = - u_4$, that is, we have a parallelotope.

\begin{proposition}
\label{Prop:VRepresentationParallelogram}
    Let $U = \{ \pm u, \pm v \}$, where $v,u \in \mathbb{S}^1, u \not = v$. Then it holds 
    \begin{equation}
     \clPcone = \conv\left\{\frac{1}{2}(e_3 + e_2),\frac{1}{2}(e_3 + e_4),\frac{1}{2}(e_1 + e_2),\frac{1}{2}(e_1 + e_4)\right\}.
 \end{equation}
 as well as 
 \begin{align}
       \clPcone  & =  \\  \bigcap_{i = 1,3, j = 2,4} H^-\left(\frac{1}{2}(e_i + e_j),1\right) \cap H(e_1 + e_2 & + e_3 + e_4,1) \cap \bigcap_{i = 1}^4 H^+(e_i,0).
 \end{align}
\end{proposition}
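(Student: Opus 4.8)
The plan is to exploit the special structure of the parallelogram case, where $U = \{\pm u, \pm v\}$ is a reducible set in the sense of Definition \ref{Definition:IrreducibleSets}. By Lemma \ref{prop:invariant}, both $P_\scc(U)$ and $C_\cv(U)$ are invariant under $\mathrm{GL}(2,\R)$, so I would first apply a linear transformation mapping $u \mapsto e_1$ and $v \mapsto e_2$, reducing to the case where $U$ consists of the four axis directions $\{\pm e_1, \pm e_2\}$ with numbering $u_1 = e_1, u_2 = e_2, u_3 = -e_1, u_4 = -e_2$. In this normalized setting every polygon $P(U,b)$ with all four facets present is an axis-parallel rectangle, so the cone-volume computation becomes fully explicit.

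For such a rectangle with $b_1, b_2, b_3, b_4 > 0$, the side lengths are $b_1 + b_3$ (horizontal) and $b_2 + b_4$ (vertical); the four cone-volumes are $\gamma_i = \tfrac{1}{2} b_i \ell_i$ where $\ell_i$ is the length of the edge with normal $u_i$. A direct calculation gives $\gamma_1 \gamma_3 = \tfrac{1}{4} b_1 b_3 (b_2+b_4)^2$ and $\gamma_2 \gamma_4 = \tfrac{1}{4} b_2 b_4 (b_1 + b_3)^2$, while the constraints $\gamma_1 + \gamma_3 = \tfrac{1}{2}(b_1+b_3)(b_2+b_4)$ and $\gamma_2 + \gamma_4 = \tfrac{1}{2}(b_1+b_3)(b_2+b_4)$ force $\gamma_1 + \gamma_3 = \gamma_2 + \gamma_4 = \tfrac{1}{2}$ after normalizing the total volume to $1$. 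Thus the proper (full-dimensional) part of $C_\cv(U)$ lives in the affine plane cut out by $\gamma_1 + \gamma_3 = \tfrac{1}{2}$ and $\gamma_2 + \gamma_4 = \tfrac{1}{2}$ inside the simplex, and I would verify that all nonnegative points satisfying these two equations are attainable (allowing degenerate limits where some $b_i \to 0$). This identifies $\clPcone$ as the set $\{x \geq 0 : x_1 + x_3 = \tfrac12,\, x_2 + x_4 = \tfrac12\}$, which is exactly the square with the four listed vertices $\tfrac{1}{2}(e_i + e_j)$, $i \in \{1,3\}$, $j \in \{2,4\}$.

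From this description both representations follow. For the $\mathcal{V}$-representation, the two equality constraints $x_1 + x_3 = x_2 + x_4 = \tfrac12$ together with $x \geq 0$ describe a product of two segments, whose four extreme points are precisely the midpoints $\tfrac12(e_i+e_j)$; these are the images under the $\mathcal{V}$-representation of $P_\scc(U)$ recalled in the preliminaries, which for the reducible set matches Proposition \ref{prop:sccdirect}. For the $\mathcal{H}$-representation, I would rewrite the two equalities as $x_1 + x_3 = \tfrac12$, and note that combined with the global constraint $\sum x_i = 1$ (the hyperplane $H(e_1 + \dots + e_4, 1)$) this is equivalent to $x_2 + x_4 = \tfrac12$; the inequalities $H^-(\tfrac12(e_i+e_j),1)$ then encode exactly the facet-defining constraints of the square together with nonnegativity $H^+(e_i,0)$. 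The main subtlety—and the step I would treat most carefully—is the passage to the closure and convex hull: the cone-volume set $C_\cv(U)$ itself is not closed (degenerate rectangles where a facet collapses are limits but may not be attained), so I must argue that taking $\cl$ fills in exactly the boundary segments and that the convex hull adds nothing beyond what the equality constraints already force. Since the equality-constrained region is already convex and compact, this reduces to checking that every boundary point arises as a genuine limit of cone-volume vectors, which the explicit parametrization above makes transparent.
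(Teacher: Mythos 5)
Your argument is correct and in substance the same as the paper's: the paper simply observes that $U$ is reducible and invokes Proposition \ref{PropositionSeparatorsPsccAndConeVolumesDimension} to conclude that $C_\cv(U)=P_\scc(U)$ is the square $\{x\ge 0: x_1+x_3=x_2+x_4=\tfrac12\}$, whereas you inline that step by normalizing to $U=\{\pm e_1,\pm e_2\}$ via Lemma \ref{LemmaPsccAndPconeWithLinearMap} and computing the cone volumes of rectangles explicitly; your parametrization has the added benefit of showing that $C_\cv(U)$ is already closed and convex in this case, so the closure/convex-hull passage you flag as delicate is in fact automatic. One caveat on your final matching step: read literally, the halfspaces $H^-\bigl(\tfrac12(e_i+e_j),1\bigr)$ in the stated $\mathcal{H}$-representation say $x_i+x_j\le 2$ and are vacuous on the simplex, while the facet-defining inequalities of the square you derived are $x_1+x_3\le\tfrac12$ and $x_2+x_4\le\tfrac12$ (each forced to equality by $\sum_i x_i=1$), so you should not assert that the displayed halfspaces ``encode exactly'' the facets without correcting the normalization --- this appears to be a typo in the proposition's statement rather than an error in your derivation, but a careful write-up should say so.
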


\begin{proof}
    We have the following representation of $U$, Figure \ref{figure:UinSphere2}.
  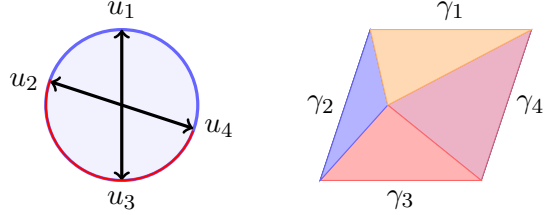
\begin{figure}[h]
     \centering
     \begin{tikzpicture}
     \begin{scope}

        \filldraw[color=blue!60, fill = blue!5, very thick](0,0) circle (1);
        \draw[->,very thick](0,0) -- (0,1) node[above]{$u_1$};
        \draw[->,very thick](0,0) -- (-0.94,0.31) node[left]{$u_2$};
        \draw[->,very thick](0,0) -- (0,-1) node[below]{$u_3$};
        \draw[->,very thick](0,0) -- (0.94,-0.31) node[right]{$u_4$};
        \draw [red,thick,domain=160:340] plot ({cos(\x)}, {sin(\x)});
        \end{scope}

       \begin{scope}[xshift=4cm]
       \draw[black] (0.73,-1.0) -- (1.39,1.0) -- (-0.73,1.0) -- (-1.39,-1.0) -- cycle;
       \filldraw[color = purple!60, fill = purple!30] (-1/2,0) -- (0.73,-1.0) -- (1.06,0) node[black, right]{$\gamma_4$} -- (1.39,1.0)   -- cycle;
       \filldraw[color = red!60, fill = red!30] (-1/2,0) -- (0.73,-1.0) --  (-0.33,-1) node[black, below]{$\gamma_3$} -- (-1.39,-1.0)  -- cycle;
       \filldraw[color = blue!60, fill = blue!30] (-1/2,0) --  (-0.73,1.0) -- (-1.06,0) node[black, left]{$\gamma_2$}  -- (-1.39,-1.0)  -- cycle;
       \filldraw[color = orange!60, fill = orange!30] (-1/2,0) -- (-0.73,1.0) -- (0.33,1) node[black, above]{$\gamma_1$} -- (1.39,1.0)  -- cycle;
           
       \end{scope} 
     
    \end{tikzpicture}
     \caption{The set $U$ with the open hemisphere $\omega \subset \mathbb{S}^1$ colored in red. This time we can only construct the parallelotope with the outer normals in $U$. Each $\gamma_i$ represents the cone-volume corresponding to the facet defined by $u_i$.}
     \label{figure:UinSphere2}
 \end{figure}

 In this case, the set $U$ is reducible as $\R^2 = \lin(\pm u_1) \oplus \lin(\pm u_2)$. From Proposition \ref{PropositionSeparatorsPsccAndConeVolumesDimension} it follows $P_\scc(U) = \Pcone$. 
 Hence, we get the following $\mathcal{V}$-representation 
 \begin{equation}
     \clPcone = \conv\left\{\frac{1}{2}(e_3 + e_2),\frac{1}{2}(e_3 + e_4),\frac{1}{2}(e_1 + e_2),\frac{1}{2}(e_1 + e_4)\right\}.
 \end{equation}
 as well as the $\mathcal{H}$-representation 
 \begin{align}
       \clPcone  & =  \\  \bigcap_{i = 1,3, j = 2,4} H^-\left(\frac{1}{2}(e_i + e_j),1\right) \cap H(e_1 + e_2 & + e_3 + e_4,1) \cap \bigcap_{i = 1}^4 H^+(e_i,0).
 \end{align}
\end{proof}

As we will later see, the three different shapes formed in the previous discussion, i.e. triangles, trapezoids and parallelograms, are the 'extreme cases', meaning that these cases correspond to vertices of the convex hull of the cone-volume set $\Pcone$ and these three different shapes characterize all the vertices of $\clPcone$.
To simplify terminology, a trapezoid is any quadrilateral with two parallel edges—in particular, parallelograms are trapezoids.

Since we want to be able to distinguish between vectors in $U$ that can form a triangle
and vectors in $U$ that do not have this property, we study the sets $U_\triangle$ and $U_\square$, as defined in Definition \ref{Definition:OuterNormalsInUsquareAdjacentFacets}.
The subindex of $\Usquare$ suggests that the elements contained in $\Usquare$ form a quadrilateral with additional properties. In fact, if $u \in \Usquare$ then $-u \in U$ and $u$ can define trapezoids with the outer normals contained in $U$. In addition, we will see that $|\Usquare| = 4$ if and only if $U$ defines a parallelogram. For these statements, we need an auxiliary lemma, which we state in a more general framework. 

\begin{lemma}
\label{Lemma:ExtendLinearBasisToPositiveSpanningSet}
    Let $U \in \mathcal{U}(2,m)$ and let $U' \subset U$ be a linear basis of $\R^n$. Then, there are linearly independent vectors $x_1,\dots,x_k$ contained in $U \setminus U', k \leq n$, such that $\pos(U' \cup \{ x_1,\dots,x_k \}) = \R^n$ and $|U' \cup \{ x_1,\dots,x_k \}| \leq 2n$.
\end{lemma}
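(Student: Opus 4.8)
The plan is to obtain the vectors $x_1,\dots,x_k$ from a single application of Carathéodory's theorem for cones, and then to verify positive spanning by a short interior‑point computation. Write $U'=\{v_1,\dots,v_n\}$ (so $|U'|=n$) and set $w\coloneqq-(v_1+\dots+v_n)$. Since $U$ positively spans, i.e.\ $\pos(U)=\R^n$, we have $w\in\pos(U)$, and the conical Carathéodory theorem yields a representation $w=\sum_{y\in Y}\mu_y\,y$ with $Y\subseteq U$ linearly independent, all $\mu_y>0$, and $|Y|\le n$.

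First I would isolate the part of $Y$ lying outside $U'$. Put $X\coloneqq Y\setminus U'$ and move the remaining terms to the other side: since every $y\in Y\cap U'$ equals some $v_i$ with coefficient $\mu_y>0$, the vector
\[
    \tilde w\coloneqq\sum_{x\in X}\mu_x\,x \;=\; w-\sum_{y\in Y\cap U'}\mu_y\,y \;=\; -\sum_{i=1}^n c_i\,v_i,\qquad c_i\ge 1,
\]
has strictly negative coordinates in the basis $U'$. If $X$ were empty this would read $0=-\sum_i c_iv_i$ with all $c_i>0$, contradicting the linear independence of $v_1,\dots,v_n$; hence $X=\{x_1,\dots,x_k\}$ is non‑empty, is contained in $U\setminus U'$, is linearly independent as a subset of $Y$, and satisfies $k\le|Y|\le n$.

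Next I would check $\pos(U'\cup X)=\R^n$. As $\tilde w\in\pos(X)$ and $v_1,\dots,v_n\in U'$, it is enough to see that $\pos\{v_1,\dots,v_n,\tilde w\}=\R^n$, for which it suffices that $0\in\inter\conv\{v_1,\dots,v_n,\tilde w\}$ (a cone containing a neighbourhood of the origin is all of $\R^n$). In coordinates with respect to $U'$ these are the points $e_1,\dots,e_n$ and $(-c_1,\dots,-c_n)$; since the former lie on $\{\sum_j x_j=1\}$ while the latter does not, the $n+1$ points are affinely independent, and solving $0=\sum_i\alpha_i e_i+\beta(-c_1,\dots,-c_n)$ under $\sum_i\alpha_i+\beta=1$ gives $\beta=(1+\sum_i c_i)^{-1}>0$ and $\alpha_i=\beta c_i>0$. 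Thus $0$ is a strict convex combination of the vertices of an $n$‑simplex, hence an interior point, and positive spanning follows. The cardinality bound is then immediate, since $U'$ and $X$ are disjoint and so $|U'\cup X|=n+k\le 2n$.

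The step I expect to be the main obstacle is not the spanning computation but the requirement that the added vectors lie in $U\setminus U'$: Carathéodory by itself may return generators inside $U'$, and the key observation is that absorbing those generators into $w$ only makes its coordinates more negative, so the reduced vector $\tilde w$ built solely from $X\subseteq U\setminus U'$ still forces $\pos(U'\cup X)=\R^n$. Everything else is routine linear algebra.
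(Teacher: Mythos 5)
Your proof is correct and takes essentially the same approach as the paper, whose entire proof is the single sentence that the claim ``follows directly from Carath\'eodory's Theorem''; you have simply supplied the details (applying conical Carath\'eodory to $w=-(v_1+\dots+v_n)$, absorbing the generators that land in $U'$, and verifying that $0$ lies in the interior of $\conv\{v_1,\dots,v_n,\tilde w\}$) that the paper leaves implicit. The argument is sound, including the nonemptiness of $X$ and the cardinality bounds.
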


\begin{proof}
    That follows directly from Carathéodory Theorem. 
\end{proof}

Lemma \ref{Lemma:ExtendLinearBasisToPositiveSpanningSet} implies that any vector in $U$ is the outer normal vector of a polygon with at most four facets. That observation is important for the structure of vectors contained in $\Usquare$. In fact, we show that all vectors contained in \(\Usquare\) correspond to trapezoids.

\begin{lemma}
    \label{Lemma:StructureUSquare}
    Let $U \in \mathcal{U}(2,m)$ and consider $u \in \Usquare$. 
    The following is true:
    \begin{enumerate}[label = (\roman*)]
        \item For every subset $\{x,y,z \} \subseteq U \setminus \{ u\}$ that fulfills $\pos\{u,x,y,z\} = \R^2$ it holds $-u \in \{ x,y,z \}$.
        \item There exists an open hemisphere $ \omega \subset \mathbb{S}^1$ such that $\{ -u \} = U \cap \omega$.
    \end{enumerate}  
    In particular, every subset $\{u,x,y,z \} \subseteq U$ that fulfills $\pos \{u,x,y,z \} = \R^2$ defines a trapezoid and $-u \in U$.
\end{lemma}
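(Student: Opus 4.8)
The plan is to unpack the definition of $\Usquare$ and push it through the contrapositive. Recall $u \in \Usquare = U \setminus \Utriangle$ means precisely that \emph{no} pair $v,w \in U$ satisfies $\pos\{u,v,w\} = \R^2$; equivalently, $u$ is never the outer normal of a triangle with normals in $U$. For part (i), suppose $\{x,y,z\} \subseteq U \setminus \{u\}$ with $\pos\{u,x,y,z\} = \R^2$. By Lemma \ref{Lemma:ExtendLinearBasisToPositiveSpanningSet} (applied after selecting a linear basis from this four-element set), there is a minimal positively spanning subset, so after discarding redundant vectors we may assume $u,x,y,z$ are in \emph{convex position} around $\mathbb{S}^1$ and that no three of them already positively span $\R^2$ (otherwise $u$ together with two of $x,y,z$ would positively span, contradicting $u \in \Usquare$ — unless $u$ is the one dropped). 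The key dichotomy is: either some three of the four vectors positively span $\R^2$, or they do not. If $\{x,y,z\}$ alone positively spanned $\R^2$ we could not yet conclude anything about $u$, so the real content is that $u$ \emph{must} participate, and whenever $u$ participates in a positive spanning triple we reach a contradiction with $u \in \Usquare$ \emph{unless} the spanning genuinely requires all four vectors — and a four-vector positive basis of $\R^2$ with no spanning sub-triple forces two of the vectors to be antipodal. I would argue that the antipodal pair must be $\{u, -u\}$: if the antipodal pair were among $x,y,z$, say $x = -y$, then $u$ lies in one of the two open half-planes they bound, and picking whichever of the remaining vectors lies in the opposite half-plane gives a positively spanning \emph{triple} containing $u$, again contradicting $u \in \Usquare$. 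Hence $-u \in \{x,y,z\}$.

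For part (ii), I would use (i) together with $\pos(U) = \R^2$. Since $\pos(U) = \R^2$ and $U$ is finite, order the vectors of $U$ by angle on $\mathbb{S}^1$; the positive-hull condition means no closed half-plane contains all of $U$, i.e. the angular gaps between consecutive vectors are all strictly less than $\pi$. The claim $\{-u\} = U \cap \omega$ for some open hemisphere $\omega$ is equivalent to saying that $-u$ is the \emph{unique} element of $U$ strictly inside some open half-plane, which means the two neighbors of $-u$ in the angular order are exactly the two vectors bounding a half-plane whose only interior $U$-point is $-u$. Equivalently, $u$ lies in (or is) one angular gap of width exactly spanning the rest: I would show that if $U \cap \omega$ contained a vector $w \neq -u$ for every hemisphere $\omega$ around $-u$, then one could choose $x,y \in U$ flanking $u$ tightly enough that $\pos\{u,x,y\} = \R^2$, contradicting $u \in \Utriangle^{\mathsf c}$. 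So $u$ must sit alone on its side, forcing all of $U \setminus \{-u\}$ into the closed half-plane on one side of the line $\lin\{u\}$, with $-u$ the lone vector in the complementary open hemisphere.

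The final ``in particular'' claim then follows by combining both parts: given any $\{u,x,y,z\} \subseteq U$ with $\pos\{u,x,y,z\} = \R^2$, part (i) gives $-u \in \{x,y,z\}$, so two of the four normals are antipodal, hence the polygon $P(U,b)$ cut out using exactly these four normals has two parallel edges and is a trapezoid (in the broad sense fixed above, which includes parallelograms); and $-u \in U$ is immediate.

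\textbf{Main obstacle.} The crux is part (i): cleanly extracting, from the four-element positively spanning set, the combinatorial fact that the \emph{only} way $u$ can avoid lying in a spanning triple is for its antipode to be present. The danger is degenerate configurations (repeated directions are excluded since $U \in \mathcal{U}(2,m)$, but near-antipodal or collinear triples need care), and the argument must be phrased so that Lemma \ref{Lemma:ExtendLinearBasisToPositiveSpanningSet}'s guarantee of a positively spanning subset of size $\le 4$ is actually exploited rather than just invoked. I expect part (ii) to be comparatively routine angular bookkeeping once part (i) pins down the antipodal structure.
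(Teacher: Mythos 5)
Your plan has a genuine gap in part (i): you explicitly set aside the case in which $\{x,y,z\}$ alone satisfies $\pos\{x,y,z\}=\R^2$ (``we could not yet conclude anything about $u$''), but the lemma asserts $-u\in\{x,y,z\}$ for \emph{every} triple with $\pos\{u,x,y,z\}=\R^2$, and this sub-case is exactly the one in which your reduction to a minimal positively spanning subset discards $u$ itself, so your machinery says nothing about it. It is not vacuous a priori and does need an argument (for instance: $x,y,z$ cut $\mathbb{S}^1$ into three open arcs of length $<\pi$, $u$ lies in the open arc between, say, $x$ and $y$, and then one of $\{u,x,z\}$, $\{u,y,z\}$ positively spans $\R^2$ unless $z=-u$; so $u\in\Usquare$ forces $z=-u$). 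Two further weak points in (i): the structure fact you invoke (a four-element positive spanning set of $\R^2$ with no spanning sub-triple contains an antipodal pair) is true but nontrivial and is neither proved by you nor available in the paper; and your argument that the antipodal pair must be $\{u,-u\}$ fails as stated --- with $x=e_1$, $y=-e_1$, $u$ at $80^\circ$ and $z$ at $280^\circ$, the vector $z$ does lie in the open half-plane opposite $u$, yet $\{u,x,z\}$ does not positively span; the correct claim is that one of $\{u,x,z\}$, $\{u,y,z\}$ spans whenever $z\neq-u$. The paper avoids all of this: it normalizes $u=e_1$, $x=e_2$ by a linear map, extracts $v=\mu_y y+\mu_z z<0$ with $\mu_y,\mu_z>0$ from the spanning hypothesis, and eliminates every sign pattern for $y,z$ other than one of them equalling $-e_1$ by exhibiting a positively spanning triple through $e_1$; that single computation also covers the sub-case you skipped.

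For part (ii) your mechanism is essentially sound and close to the paper's: if no open hemisphere isolates $-u$, the two angular neighbours of $-u$ in $U$ subtend a total angle $<\pi$ across $-u$, and together with $u$ they form a positively spanning triple, contradicting $u\notin\Utriangle$. But note that you must flank $-u$, not $u$, and your closing description of $\omega$ is wrong: the isolating hemisphere is in general \emph{not} the complement of a closed half-plane bounded by $\lin\{u\}$ or by its orthogonal complement. For $U$ at angles $0^\circ,100^\circ,180^\circ,280^\circ$ with $u=e_1$, one checks $e_1\in\Usquare$, and the only admissible $\omega$ is the open arc from $100^\circ$ to $280^\circ$, whose boundary is the line through the neighbour of $-u$ farthest from $u$ --- precisely the extremal vector $z$ minimizing $\langle\,\cdot\,,u\rangle$ over $U\setminus\{-u\}$ that the paper selects and whose perpendicular it uses to define $\omega$. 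Any corrected write-up of your part (ii) will have to produce this tilted hemisphere, not one aligned with $u$.
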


\textit{Proof.}
Let $u \in \Usquare$. There are vectors $x,y,z \in U \setminus \{ u \}$ such that $\pos\{u,x,y,z\} = \R^2$, Lemma \ref{Lemma:ExtendLinearBasisToPositiveSpanningSet}. Since $u \not \in \Utriangle$ for every proper subset $V \subsetneq \{x,y,z \}$ we have $\pos(\{u\} \cup V) \not = \R^2$. We wish to prove i), that is $-u \in \{x,y,z \}$.

First, w.l.o.g. we can assume $u = e_1$ and $x = e_2$. Otherwise, apply a linear isomorphism $\phi: \R^2 \rightarrow \R^2$, which maps $u$ to $e_1$ and $x$ to $e_2$. The positively spanning property $\pos\{e_1,e_2,y,z\} = \R^2$ implies the existence of scalars $\mu_y,\mu_z > 0$ such that 
\begin{equation}
    v \coloneqq  \mu_y y + \mu_z z < 0.
\end{equation}
Further, we can assume that $y_1 < 0$. If $y_2 = 0$, we have $y = -e_1$ and we are done. 

Therefore, assume that $y_2 \not = 0$. In that case $y_2 > 0$, since otherwise $\pos\{e_1,e_2,y\} = \R^2$. Hence, $z_2 < 0$. If in addition it holds $z_1 = 0$ we would get $\pos\{e_1, y,z\} = \R^2$, which is a contradiction to $u \in \Usquare = U \setminus \Utriangle$. We conclude that $z_1 > 0$ as before. But also in this case it would be true that $\pos\{e_1, y,z\} = \R^2$, due to the following observation 
\begin{align}
   e_2 & =  \frac{z_1}{-z_2} e_1 + \frac{1}{z_2} z \in \pos\{e_1,y,z\} \\
   -e_2 & = \frac{v_1}{v_2} e_1 + \frac{-1}{v_2} v \in \pos\{e_1,y,z\} \\
   - e_1 & = \frac{v_2}{v_1} e_2 + \frac{-1}{v_1} v \in \pos\{e_1,y,z\}.
\end{align}
The case $y_2 \not = 0$ always leads to a contradiction. Thus, $y_2 = 0$ must hold and we conclude $y = - e_1$.  

We have proven that for every $u \in \Usquare$ it holds $-u \in U$ and for every subset $\{x,y,z \} \subset U \setminus \{ u\}$ that fulfills $\pos\{u,x,y,z\} = \R^2$ the set $ \{ x,y,z \}$ contains the vector $-u$. 

We now prove the assertion about the hemisphere. Again, let $u \in \Usquare$. As before, we can assume that $u = e_1$, by applying a rotation $\pi : \R^2 \rightarrow \R^2$, which maps $u$ to $e_1$. Consider an element $z \in U$ such that 
\begin{equation}
    z_1 = \min_{v \in U \setminus \{ -e_1 \}} v_1 > - 1.
\end{equation}
It holds $z_1 \not = -1$. If $z_1 \geq 0$, we can choose $\omega \subset \mathbb{S}^1$ to be the open hemisphere $H^-(e_1,0) \setminus H(e_1,0)$ and we would be done.  Therefore, assume that $z_1 < 0$. Since $z \not = -e_1$ it either holds $z_2 > 0$ or $z_2 < 0$. We will only focus on the latter case, but the other case can be treated similarly.


Define the unit vector $ z^{\perp} \coloneqq (- z_2,z_1) $. We will prove that it must hold $\langle y,z^{\perp} \rangle \geq 0$ for every $y \in U \setminus \{ -e_1 \}$ which in turns would allow us to define the open hemisphere $\omega$ as $H^-(z^{\perp},0) \setminus H(z^{\perp},0)$.

First, assume that $y_2 \leq 0$. Due to the choice of $z$ it holds $y_1 \geq z_1$ and therefore also $y_2 \leq z_2$ which is equivalent to $(y_2 - z_2) \leq 0$. We compute 
\begin{equation}
    \langle y, z^{\perp} \rangle = (-z_2) y_1 + z_1 + y_2 \geq -z_2 z_1 + z_1 y_2 = z_1 (y_2 - z_2) \geq 0.
\end{equation}

Consider now the case $y_2 > 0$, but in addition it holds $\langle y,z^{\perp} \rangle < 0$. 
It must also hold $y_1 > 0$. For if $y_1 = 0$, it holds $\pos\{e_1,y,z\} = \R^2$, since we assume $z < 0$. The case $y_1 < 0$ also implies $\pos\{e_1,y,z\} = \R^2$, due to the following observation:
\begin{align}
   e_2 & =  \frac{-y_1}{y_2} e_1 + \frac{1}{y_2} y \in \pos\{e_1,y,z\} \\
   -e_2 & = \frac{z_1}{z_2} e_1 + \frac{-1}{z_2} z \in \pos\{e_1,y,z\} \\
   - e_1 & = \frac{z_2}{z_1} e_2 + \frac{-1}{z_1} z \in \pos\{e_1,y,z\}.
\end{align}
Hence, we have $z < 0$ and $y > 0$ and $u = e_1$. As in the calculations above, we conclude $-e_2 \in \pos\{e_1,y,z\}$. We wish to prove that also $-e_1 \in \pos\{e_1,y,z\}$. For that we consider the following intersection of the two lines $\conv\{y,z\}$ and $\pos\{-e_1\}$ and prove
\begin{equation}
    |\conv\{y,z\} \cap \pos\{-e_1\}| = 1.
\end{equation}
Since these are two different lines, the intersection can at most contain one element. In addition, it holds $z \in H(z^{\perp},0)$, $y \in H^-(z^{\perp},0) \setminus H(z^{\perp},0)$ and $e_1 \in H^+(z^{\perp},0 ) \setminus H(z^{\perp},0)$, as well as $z_1 <0$ and $y_1 > 0$. Thus, there is a scalar $\lambda > 0$ such that $(1 - \lambda) y + \lambda z \in \lin\{e_1\}$. Due to the halfspace relations it must hold $(1 - \lambda) y + \lambda z \in \pos\{-e_1\}$. This implies $-e_1 \in \pos\{e_1,y,z\}$ and also 
$e_2 = \frac{y_1}{y_2} \cdot (- e_1) + \frac{1}{y_2} y \in \pos\{e_1,y,z\}$. 

This contradiction implies $\langle y,z^{\perp} \rangle \geq 0$ and we can choose $\omega = H^-(z^{\perp},0) \setminus H(z^{\perp},0)$. 

The fact about the trapezoids is now an easy consequence of ii).
\qed
\newline 

  \begin{figure}[h]
     \centering
     \begin{tikzpicture}

        \filldraw[color=blue!60, fill = blue!5, very thick](0,0) circle (1);
        \draw[->,very thick](0,0) -- (0,1) node[above]{$e_1$};
        \draw[->,very thick](0,0) -- (-0.77,0.63) node[left]{$z^{\perp}$};
        \draw[->,very thick](0,0) -- (0.31,0.94) node[right]{$y$};
        \draw[->,very thick](0,0) -- (0,-1) node[below]{$-e_1$};
        \draw[->,very thick](0,0) -- (-0.77,-0.63) node[left]{$z$};
        \draw[->,very thick](0,0) -- (0.94,-0.31) node[right]{$y'$};
        \draw(-2,0) -- (2,0) node[right]{$H(e_1,0)$};
        \draw(-1.54,-1.26) -- (1.54,1.26) node[right]{$H(z^{\perp},0)$};
        \draw [red,thick,domain=220:398] plot ({cos(\x)}, {sin(\x)});
     
    \end{tikzpicture}
            \caption{Computing the hemisphere in the setting of Lemma \ref{Lemma:StructureUSquare}. The hemisphere $\omega \subset \mathbb{S}^1$ is colored red, and we rotated the set $U$ so that $u = e_1$. From the picture it is evident that 
            $\pos\{e_1,z,y'\} = \R^2$. }
     
    \end{figure}
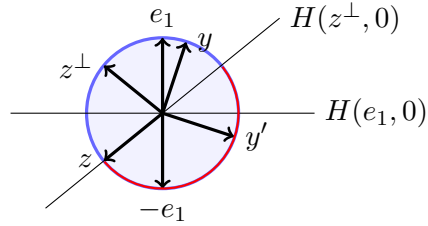

\begin{corollary}
\label{Corollary:StructureUSquare}
    For a set $U \in \mathcal{U}(2,m)$ it holds $|\Usquare| \leq 4$. Furthermore, equality occurs if and only if $U$ defines a parallelogram. 
\end{corollary}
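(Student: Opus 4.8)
The plan is to reduce the statement to counting the vectors of $U$ that are \emph{exposed}, meaning that $\{w\}=U\cap\omega$ for some open hemisphere $\omega\subset\mathbb{S}^1$. By part (ii) of Lemma \ref{Lemma:StructureUSquare}, for every $u\in\Usquare$ the antipode $-u$ lies in $U$ and is exposed, and the assignment $u\mapsto-u$ is injective; hence $|\Usquare|$ is at most the number $k$ of exposed vectors of $U$. It therefore suffices to show that $k\le 4$, that $k=4$ forces $U$ to be a parallelogram, and conversely that every vector of a parallelogram lies in $\Usquare$.

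First I would set up the angular bookkeeping. Write $U=\{w_1,\dots,w_m\}$ ordered cyclically by angle, and let $g_j\in(0,2\pi)$ be the length of the arc from $w_j$ to $w_{j+1}$ (indices mod $m$), so that $\sum_{j=1}^m g_j=2\pi$. The key claim is that $w_j$ is exposed if and only if the two arcs flanking it satisfy $g_{j-1}+g_j\ge\pi$: an open half-circle isolating $w_j$ must lie strictly between $w_{j-1}$ and $w_{j+1}$, and since the hemisphere is open its bounding diameter may pass through the two neighbours, so the flanking arcs only need to span at least a half-circle. Granting this, summing over all $j$ gives
\[
  \sum_{j=1}^m (g_{j-1}+g_j)=2\sum_{j=1}^m g_j=4\pi .
\]
If $k$ vectors are exposed, the corresponding $k$ terms each contribute at least $\pi$, while every term is nonnegative, so $k\pi\le 4\pi$ and hence $k\le 4$. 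This already yields $|\Usquare|\le 4$.

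For the equality case, suppose $k=4$. Then the four exposed terms sum to at least $4\pi$, which is the entire total, so no further positive term may occur; as each $g_{j-1}+g_j$ is strictly positive, this forces $m=4$ and $g_{j-1}+g_j=\pi$ for all four indices. Solving $g_4+g_1=g_1+g_2=g_2+g_3=g_3+g_4=\pi$ gives $g_1=g_3$, $g_2=g_4$ and $g_1+g_2=\pi$, whence $w_3=-w_1$ and $w_4=-w_2$; that is, $U=\{\pm w_1,\pm w_2\}$ is a parallelogram. Conversely, if $U=\{\pm u,\pm v\}$ then no vector of $U$ can be completed to a positively spanning triple, since the positive hull of any one vector together with two of the remaining three is always a closed half-plane; hence $\Utriangle=\emptyset$, $\Usquare=U$ and $|\Usquare|=4$ (alternatively one may invoke Proposition \ref{PropositionSeparatorsPsccAndConeVolumesDimension}). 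Since $|\Usquare|\le k\le4$, the equality $|\Usquare|=4$ forces $k=4$, hence a parallelogram, and the converse gives $|\Usquare|=4$ back; this proves the corollary.

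The main obstacle is establishing the exposedness criterion $g_{j-1}+g_j\ge\pi$, and in particular its boundary instance $g_{j-1}+g_j=\pi$: there the isolating hemisphere is forced to be the open arc strictly between $w_{j-1}$ and $w_{j+1}$ of length exactly $\pi$, and one must verify that openness indeed permits the two neighbours to sit on the bounding diameter while keeping $w_j$ strictly inside. Once this is pinned down, both the bound and the equality analysis follow at once from the identity $\sum_j(g_{j-1}+g_j)=4\pi$.
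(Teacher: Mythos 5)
Your proof is correct, but the counting mechanism is genuinely different from the paper's. Both arguments enter through Lemma~\ref{Lemma:StructureUSquare}~(ii), which supplies, for each $u\in\Usquare$, an open hemisphere isolating $-u$. The paper then fixes a single such hemisphere $\omega$ (for one chosen $u\in\Usquare$) and shows directly that $\Usquare\subseteq\{\pm u\}\cup\left(\cl(\omega)\setminus\omega\right)$, a set of at most four points; its equality case is handled by covering $\mathbb{S}^1$ with the four isolating hemispheres to conclude that $U$ itself has only four elements. You instead count \emph{all} exposed vectors via the angular-gap criterion $g_{j-1}+g_j\ge\pi$ and the identity $\sum_j(g_{j-1}+g_j)=4\pi$, which yields the bound $k\le 4$ and the equality analysis in one stroke: equality forces $m=4$ and all flanking sums equal to $\pi$, hence two antipodal pairs. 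Your route costs you the verification of the exposedness criterion — including its boundary case $g_{j-1}+g_j=\pi$, which you correctly flag and which does work out precisely because the hemisphere is open while its bounding diameter may carry the two neighbours — but it buys a sharper statement (an exact characterization of which vectors of $U$ can be isolated by a hemisphere) and makes the rigidity in the equality case completely transparent, whereas the paper's covering claim $\mathbb{S}^1=\omega_1\cup\cdots\cup\omega_4$ is asserted without detail. The converse direction (a parallelogram has $\Utriangle=\emptyset$ because every triple from $\{\pm u,\pm v\}$ contains an antipodal pair and so positively spans only a closed half-plane) is the same in substance as the paper's ``it is clear.''
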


\textit{Proof.} If $\Usquare = \emptyset$ there is nothing to prove. Therefore, assume $\Usquare \not = \emptyset$ and consider a vector $u \in \Usquare$. From Lemma \ref{Lemma:StructureUSquare} ii) we know that there exists an open hemisphere $\omega \subset \mathbb{S}^1$ such that $\omega \cap U = \{ -u \}$. We wish to prove that $\Tilde{U} \coloneqq  - \omega \cap ( \Usquare \setminus \{ u \}) = \emptyset$. Assume the contrary is true and consider a vector $v \in \Tilde{U}$. Since $v \in \Usquare$, it follows that $-v \in U \setminus \{ -u \} $ and also $-v \in \omega$. Therefore, we conclude that $-v \in (\omega \cap U) \setminus \{ -u \} $. But the last set is empty, a contradiction. Thus, 
\begin{equation}
    \Usquare \subseteq \{ \pm u \} \cup \left( \mathrm{cl}(\omega)\setminus \omega\right).
\end{equation}

Now, if $|\Usquare| = 4$, that is  $ \Usquare = \{ \pm u \} \cup \left( \mathrm{cl}(\omega)\setminus \omega\right)$,  it must hold \linebreak $ U = \{u_1,u_2,u_3,u_4 \} = \{ \pm u \} \cup \left( \mathrm{cl}(\omega) \setminus \omega \right)$ due to the following reasoning. Write $\omega_i$ for the open hemisphere contained in $\mathbb{S}^1$, such that $\{-u_i\} = U \cap \omega_i$. It is clear that $ \mathbb{S}^1 = \omega_1 \cup \omega_2 \cup \omega_3 \cup \omega_4 $, and thus 
\begin{align}
    U & = (\omega_1 \cap U) \cup (\omega_2 \cap U) \cup (\omega_3 \cap U) \cup (\omega_4 \cap U) \\ & =  \{-u_1\} \cup \{ -u_2\} \cup \{-u_3 \} \cup \{-u_4  \} = \{u_1,u_2,u_3,u_4  \}.
\end{align}
Hence, $U = \{ \pm u,\pm x \}$ for some $x \in \mathbb{S}^1$.

If $U$ defines a parallelogram, that is $U = \{ \pm u_1, \pm u_2 \}$ for some vectors $u_1,u_2 \in \mathbb{S}^1$, it is clear that $U = \Usquare$. \qed
\newline 

\begin{remark}
\label{Remark:StrauctureUSquare}
    In the setting of Corollary \ref{Corollary:StructureUSquare} it holds $|\Usquare| \in \{ 0,1,2,4\}$. The only interesting observation is that $|\Usquare| \not = 3$. Assume that $3 \leq |\Usquare|$. It follows that the set $\Usquare$ contains at least two antipodal points $\pm u$ (cf. proof of Corollary \ref{Corollary:StructureUSquare}). Thus, $U = \{ \pm u \} \cup \left( \mathrm{cl}(\omega)\setminus \omega\right)$ for some open hemisphere $\omega$ and we conclude that $\Usquare = U$ defines a parallelotope. 
\end{remark}

Lemma \ref{Lemma:StructureUSquare} states that every polygon with outer normals contained in $U$ and minimal number of facets is either a triangle or a trapezoid. The cone-volume vectors of triangles are easily understood, as triangles are just $2$-dimensional simplices. 
We also want to understand trapezoids better; to be more precise, we will focus on the limits of cone-volume vectors corresponding to trapezoids. This limit process is important for the study of the extreme points of the closure of the convex hull of $C_\cv(U)$.

\begin{corollary}
    \label{Corollary:ConvergenceOfTrapezoidsAgaindsParallelotoptes} 
    Let $U = \{ u_1,u_2,u_3,u_4 \} \subseteq \mathbb{S}^1$. Further, we assume that $u_1 = -u_3$ and that there is an open hemisphere $u_3 \in \omega \subset \mathbb{S}^1$ such that $u_2,u_4 \not \in \omega$.
    For the vertices $v_1 = (\frac{1}{2},\frac{1}{2},0,0), v_2 = (\frac{1}{2},0,0,\frac{1}{2}), v_3 = (0,\frac{1}{2},0,\frac{1}{2}), \linebreak  v_4 = (0,0,\frac{1}{2},\frac{1}{2}) \in P_\scc(U)$ there exist  sequences of right-hand sides $b_{i,n} \in \R_{\geq 0}^4$ such that the sequence of the corresponding cone-volume vectors converges to $v_i$ for $i = 1,2,3,4 \colon  $
    \begin{equation}
        \lim_{n \rightarrow \infty} \gamma(U,b_{i,n}) = v_i, \quad \text{ for } \quad i = 1,2,3,4
    \end{equation}
    and 
    \begin{equation}
        \gamma(U,b_{i,n}) \in P_\scc(U) \quad \text{ for all } \quad  n \in \N, i = 1,2,3,4.
    \end{equation}    
\end{corollary}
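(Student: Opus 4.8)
The plan is to reduce the entire statement to the trapezoid characterization of Theorem~\ref{Theorem:ConeVolumeSetTrapezoids}, exploiting that all four target points already lie in its easy regime, namely case~(i); in particular the square-root condition of case~(ii) is never needed. The key structural observation is that $v_1,v_2,v_3,v_4$ are themselves vertices of $P_\scc(U)$, and that any point strictly satisfying $\gamma_1+\gamma_3<\gamma_2+\gamma_4$ with all coordinates positive is realizable as a genuine trapezoid. Once this is in place, the limits and the membership in $P_\scc(U)$ follow from convexity alone.

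First I would pin down $P_\scc(U)$. In the setting of Theorem~\ref{Theorem:ConeVolumeSetTrapezoids} the only linear dependence among $u_1,u_2,u_3,u_4$ is $u_1=-u_3$, and $u_2\neq -u_4$ (this must hold, since otherwise $\{u_2,u_4\}$ is not a basis and $v_3=\tfrac12(e_2+e_4)$ would fail to lie in $P_\scc(U)$, contradicting the statement). Hence the independent pairs are all $\{i,j\}\neq\{1,3\}$, and the $\mathcal{V}$-representation of the matroid base polytope gives the five vertices $\tfrac12(e_i+e_j)$ over those pairs. In particular the four points $v_1=\tfrac12(e_1+e_2)$, $v_2=\tfrac12(e_1+e_4)$, $v_3=\tfrac12(e_2+e_4)$, $v_4=\tfrac12(e_3+e_4)$ are vertices of $P_\scc(U)$ (the fifth being $\tfrac12(e_2+e_3)$), which already recovers $v_i\in P_\scc(U)$.

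Next I would fix a reference point. Since $P_\scc(U)$ is $3$-dimensional, its relative interior is nonempty; choose any $c\in\relint P_\scc(U)$. Every facet inequality is then strict at $c$, so $c\in\R^4_{>0}$ and $c_1+c_3<\tfrac12=\tfrac12\sum_i c_i$, i.e.\ $c_1+c_3<c_2+c_4$. For each $i$ I would take
\[
  \gamma^{(i,n)} \coloneqq \Bigl(1-\tfrac1n\Bigr)\,v_i + \tfrac1n\,c, \qquad n\ge 2.
\]
Because $P_\scc(U)$ is convex and both $v_i,c\in P_\scc(U)$, every $\gamma^{(i,n)}$ lies in $P_\scc(U)$; since $c>0$, also $\gamma^{(i,n)}\in\R^4_{>0}$; and applying the linear functional $x\mapsto(x_2+x_4)-(x_1+x_3)$, which is $\ge 0$ at $v_i$ and strictly positive at $c$, yields the strict inequality $\gamma^{(i,n)}_1+\gamma^{(i,n)}_3<\gamma^{(i,n)}_2+\gamma^{(i,n)}_4$. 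Finally $\gamma^{(i,n)}\to v_i$ as $n\to\infty$.

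To conclude, each $\gamma^{(i,n)}$ satisfies exactly the hypothesis of case~(i) of Theorem~\ref{Theorem:ConeVolumeSetTrapezoids} (equivalently the sufficient condition~iii) of Theorem~\ref{StancuSufficientConditions}), so the discrete measure $\mu(U,\gamma^{(i,n)})$ is the cone-volume measure of a polygon $P=P(U,b_{i,n})$. As its total mass equals $\sum_j\gamma^{(i,n)}_j=1=\vol_2(P)$, this polygon already has volume one, whence $\gamma(U,b_{i,n})=\gamma^{(i,n)}$ (cf.\ \eqref{eq:scaling}). This produces the required right-hand sides with $\gamma(U,b_{i,n})\in P_\scc(U)$ for all $n$ and $\lim_{n\to\infty}\gamma(U,b_{i,n})=v_i$. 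I expect the only genuinely delicate point to be the bookkeeping around $P_\scc(U)$—specifically verifying that the non-parallelogram condition $u_2\neq -u_4$ is in force so that Theorem~\ref{Theorem:ConeVolumeSetTrapezoids} genuinely applies and $\{u_2,u_4\}$ is a basis—after which convexity of $P_\scc(U)$ renders the membership and convergence claims immediate, and no direct computation with right-hand sides or volumes is required.
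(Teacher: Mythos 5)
Your proof is correct, but it follows a genuinely different route from the paper's. The paper's argument is a two-liner: it observes that the four pairs of normals corresponding to $v_1,\dots,v_4$ are linearly independent, hence $v_i\in P_\scc(U)$, and then invokes $P_\scc(U)\subseteq\clPcone$ from Propositions \ref{Prop:VRepresentationTrapezoid} and \ref{Prop:VRepresentationParallelogram} (whose proofs are explicitly only visual, and which themselves forward-reference this corollary for the limiting behaviour), concluding that "the statement follows". You instead build the approximating sequences explicitly: you perturb each $v_i$ towards a point $c\in\relint P_\scc(U)$, check that the perturbed points are strictly positive and strictly satisfy $\gamma_1+\gamma_3<\gamma_2+\gamma_4$, and then realize each of them as an honest cone-volume vector via case (i) of Theorem \ref{Theorem:ConeVolumeSetTrapezoids}, with the normalization $\sum_j\gamma_j^{(i,n)}=1$ guaranteeing a right-hand side $b_{i,n}\ge 0$ with $\gamma(U,b_{i,n})=\gamma^{(i,n)}$. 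This buys two things the paper's version leaves implicit: membership of every term of the sequence in $P_\scc(U)$ comes for free from convexity, and the convergence claim does not rest on the distinction between $\mathrm{cl}(C_\cv(U))$ and its convex hull (note that $v_3,v_4$ are not even vertices of $\clPcone$ in the trapezoid case, so the paper's appeal to $P_\scc(U)\subseteq\clPcone$ does not by itself produce sequences of actual cone-volume vectors). The one caveat, which you correctly flag, is that your reduction needs $u_2\neq -u_4$ for Theorem \ref{Theorem:ConeVolumeSetTrapezoids} to apply; your justification that otherwise $v_3\notin P_\scc(U)$ (since $x_2+x_4\le\tfrac12$ would be a valid inequality) correctly identifies the implicit scope of the corollary, and in the excluded parallelogram case the claim is handled anyway by Proposition \ref{Prop:VRepresentationParallelogram}.
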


\textit{Proof.} First, notice that the sets of vectors $\{u_1,u_2 \}$, $\{u_1,u_4\}$, $\{u_2,u_3\}$ and $\{u_3,u_4\}$ are linearly independent, respectively. Thus, the vectors $v_1,v_2,v_3$ and $ v_4 $ are contained in $  P_\scc(U)$.  
Since it holds $P_\scc(U) \subseteq \clPcone$ (Proposition \ref{Prop:VRepresentationTrapezoid} and  \ref{Prop:VRepresentationParallelogram}), the statement follows.\qed
\newline

Assume the setting of Corollary \ref{Corollary:ConvergenceOfTrapezoidsAgaindsParallelotoptes}.  
We note that Corollary \ref{Corollary:ConvergenceOfTrapezoidsAgaindsParallelotoptes} does not explicitly address, for example, the vertex 
\((0,\tfrac{1}{2},\tfrac{1}{2},0) \in P_\scc(U)\).  
However, in the case of a trapezoid, either the set forms a parallelogram or the vectors 
\(\{u_2, u_3, u_4\}\) span a triangle.  
In both cases, the vertex \((0,\tfrac{1}{2},\tfrac{1}{2},0)\) is contained in the convex hull of \(\clPcone\); cf. Propositions \ref{Prop:VRepresentationTrapezoid} and \ref{Prop:VRepresentationParallelogram}.

Using this observation together with Corollary \ref{Corollary:ConvergenceOfTrapezoidsAgaindsParallelotoptes} we provide a different proof for the fact that the subspace concentration polytope is (at least) contained in the convex hull of the cone-volume set, in the planar case (cf. Theorem \ref{StancuSufficientConditions} iii)). 

\begin{proposition}
    \label{Proposition:PlanarPsccSubsetConvPcone}
    Let $U \in \mathcal{U}(2,m)$. Then it holds 
    \begin{equation}
        P_\scc(U) \subseteq \clPcone.
    \end{equation}
\end{proposition}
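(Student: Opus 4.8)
The plan is to combine the $\mathcal V$-representation $P_\scc(U)=\conv\{\tfrac12(e_i+e_j):\{u_i,u_j\}\text{ a basis of }\R^2\}$ with the fact that $\clPcone$ is closed and convex: a closed convex set contains the convex hull of any subset of it, so it suffices to show that every vertex $\tfrac12(e_i+e_j)$ of $P_\scc(U)$ lies in $\clPcone$. Throughout I would use the monotonicity $\mathrm{cl}(\conv(C_{\cv}(U')))\subseteq\clPcone$ for any $U'\subseteq U$, which holds because any cone-volume vector of a polygon with outer normals in $U'$ is, after padding with zeros, the cone-volume vector of a polygon with normals in $U$ (choose the remaining support numbers large enough that those facets do not appear).

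Fix a basis $\{u_i,u_j\}$ and split according to Definition \ref{Definition:OuterNormalsInUsquareAdjacentFacets}. In the first case $u_i,u_j\in\Utriangle$. Then, by definition of $\Utriangle$, there are $v,w\in U$ with $\pos\{u_i,v,w\}=\R^2$, and similarly for $u_j$. The cone-volume set of the corresponding triangle is, by Zhu's general-position result recalled in the introduction, the full open $2$-simplex, whose closure is the closed simplex $\conv\{e_{u_i},e_v,e_w\}$; in particular $e_i$ lies in the closure of $C_{\cv}(\{u_i,v,w\})\subseteq C_{\cv}(U)$, so $e_i\in\clPcone$, and likewise $e_j\in\clPcone$. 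By convexity $\tfrac12(e_i+e_j)=\tfrac12 e_i+\tfrac12 e_j\in\clPcone$. The point of this step is that a basis need not extend to a single triangle containing \emph{both} $u_i$ and $u_j$, so one cannot always realize $\tfrac12(e_i+e_j)$ as a triangle limit directly; the convexity trick, applied to $e_i$ and $e_j$ separately, circumvents this.

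In the remaining case one of the two, say $u_i$, lies in $\Usquare$. By Lemma \ref{Lemma:StructureUSquare} we have $-u_i\in U$ and there is an open hemisphere $\omega$ with $U\cap\omega=\{-u_i\}$. Since $\{u_i,-u_i,u_j\}$ positively spans only the closed half-plane bounded by $\lin\{u_i\}$ on the side of $u_j$, whereas $\pos(U)=\R^2$, there must be a vector $w\in U$ strictly on the opposite side of $\lin\{u_i\}$. Then $U'\coloneqq\{u_i,-u_i,u_j,w\}$ is a positive basis with $u_i=-(-u_i)$ and $U'\cap\omega=\{-u_i\}$, so it is exactly a trapezoid—or a parallelogram if $w=-u_j$—in the sense of Proposition \ref{Prop:VRepresentationTrapezoid} (respectively Proposition \ref{Prop:VRepresentationParallelogram}). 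Labelling so that $u_3=-u_i$ is the hemisphere-isolated vector, the listed midpoint-vertices are $\tfrac12(e_{u_i}+e_{u_j})$ and $\tfrac12(e_{u_i}+e_w)$; hence $\tfrac12(e_i+e_j)\in\mathrm{cl}(\conv(C_{\cv}(U')))\subseteq\clPcone$, which is precisely the limiting behaviour encoded in Corollary \ref{Corollary:ConvergenceOfTrapezoidsAgaindsParallelotoptes}.

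I expect the main obstacle to be the bookkeeping in the trapezoid case: locating the opposite-side vector $w$ from $\pos(U)=\R^2$, and then checking that the $4$-element subset $U'$ genuinely satisfies the ordering and hemisphere hypotheses of Proposition \ref{Prop:VRepresentationTrapezoid}/\ref{Prop:VRepresentationParallelogram} so that the desired point appears as one of their explicit vertices. The triangle case is comparatively routine once one observes that it is the convexity of $\clPcone$, rather than a single degenerating polygon, that delivers $\tfrac12(e_i+e_j)$.
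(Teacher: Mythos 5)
Your proposal is correct and follows essentially the same route as the paper: reduce to the vertices $\tfrac12(e_i+e_j)$ of $P_\scc(U)$, handle $u_i,u_j\in\Utriangle$ by realizing $e_i$ and $e_j$ via triangles and invoking convexity, and handle $u_i\in\Usquare$ by passing to a four-element trapezoid subset and citing Corollary \ref{Corollary:ConvergenceOfTrapezoidsAgaindsParallelotoptes}. The only differences are cosmetic: you spell out the construction of the fourth normal $w$ and the padding-by-zeros monotonicity, where the paper simply cites Lemma \ref{Lemma:StructureUSquare}, and you absorb the reducible (parallelogram) case into the $\Usquare$ case rather than treating it separately.
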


\textit{Proof.} Assume first that $U$ is reducible, that is, there exists a subset $V \subsetneq U$ such that $\R^2 = \lin(V) \oplus \lin(U \setminus V) $. In that case we are working in $1$-dimensional spaces and there the subset relation is clear. 

Therefore, assume that $U$ is irreducible. 
We have to prove that every vertex of $\Pscc$ is contained in $\clPcone$. Let $v \in \Pscc$ be a vertex corresponding to the linear basis $u_1,u_2 \in U$, that is, $v = \frac{1}{2}(e_1 + e_2)$. We consider two different cases. 

First, assume that $u_1,u_2 \in \Utriangle$. In that case the cone-volume set $\Pcone$ contains the vectors $e_1$ and $e_2$ and thus $v \in \clPcone$. 

Suppose now that $u_1 \in \Usquare$ or $u_2 \in \Usquare$. We only consider the case $u_1 \in \Usquare$, the other case can be treated similarly. There are two vectors $y_1,y_2 \in U$ such that $\pos(u_1,u_2,y_1,y_2) = \R^2$ and the set $\{ u_1,u_2,y_1,y_2 \}$ is the outer normal set of a trapezoid, Lemma \ref{Lemma:StructureUSquare}. Corollary \ref{Corollary:ConvergenceOfTrapezoidsAgaindsParallelotoptes} implies the existence of a sequence of cone-volume vectors contained in $\Pcone$ that converges to $v$. Hence, also in this case $v \in \clPcone$. 
\qed
\newline

Our next goal is to provide a $\mathcal{V}$-representation of $\clPcone$. For that, we recall the set that captures the adjacent outer normals of elements contained in $\Usquare$. Consider a set $U \in \mathcal{U}(2,m)$. For an outer normal vector $u \in \Usquare$ we defined the set
\begin{equation}
    U_{\square,u} = \{ x,y \in U :  \pos\{u,-u,x,y\} = \R^2 \}.
\end{equation}
The set $U_{\square,u}$, as defined in Definition \ref{Definition:OuterNormalsInUsquareAdjacentFacets}, consists precisely of the outer normals in $U$ that correspond to facets of trapezoids with outer normal vectors $\pm u$, which are adjacent to the facet defined by $u$.

As mentioned after Proposition \ref{Prop:VRepresentationParallelogram}, the $\mathcal{V}$-representation of $\clPcone$ depends on the three sets $\Utriangle, \Usquare$ and $U_{\square,u}$.
We recall Theorem \ref{thm:VRepresentationOfclPconePlanar}.

\VRepresentation*

\bigskip

In particular, equality holds if and only if $U$ defines a parallelogram.

Let us have a look at an example before we prove the main Theorem  \ref{thm:VRepresentationOfclPconePlanar}.

\begin{example}
      Let $U = \{ u_1,u_2,u_3,u_4 \} \subset \mathbb{S}^1$ define a trapezoid, that is, we assume that $u_1 = -u_3$ and there is an open hemisphere $u_3 \in \omega \subset \mathbb{S}^1$ such that $u_2,u_4 \not \in \omega$.

      In the Propositions \ref{Prop:VRepresentationTrapezoid} and \ref{Prop:VRepresentationParallelogram} we have seen that there are two different cases, namely the set $U$ defines either trapezoids and triangles or parallelograms. 

      In the trapezoid case, the vectors $u_2,u_3,u_4$ are contained in the set $\Utriangle$ and $u_4 \in \Usquare$. In addition, we have $U_{\square,u_1} = \{ u_2,u_4 \}$. Theorem  \ref{thm:VRepresentationOfclPconePlanar} tells us that 
      \begin{equation}
          \clPcone = \conv\left\{e_2,e_3,e_4,\frac{1}{2}(e_1 + e_2), \frac{1}{2}(e_1 + e_4)\right\}.
      \end{equation}
      That is consistent with the $\mathcal{V}$-representation, that we computed in Proposition \ref{Prop:VRepresentationTrapezoid}.

      In the latter case $u_1,u_2,u_3,u_4$ are all contained in $\Usquare$ and for example $U_{\square,u_1} = \{ u_2,u_4 \}$. Thus, Theorem \ref{thm:VRepresentationOfclPconePlanar} says that the $\mathcal{V}$-representation of $\clPcone$ is given by 
      \begin{equation}
          \clPcone = \conv\left\{\frac{1}{2}(e_1+e_2),\frac{1}{2}(e_1+e_4),\frac{1}{2}(e_3+e_2),\frac{1}{2}(e_3+e_2)\right\}.
      \end{equation}
\end{example}

\bigskip

The proof of Theorem \ref{thm:VRepresentationOfclPconePlanar} involves several steps.

First, we show that the cone-volume corresponding to a vector $u \in \Usquare$ is always less than or equal to $\frac{1}{2}$. That will be done by iteratively removing facets of a given polytope $P(U,b)$, $b \in \R_{\geq 0}^{m}$, $\mathrm{vol}(P(U,b)) = 1$, that meet the facet defined by $u$, say $F_u$, showing that this will only increase the cone-volume corresponding to $u$. We will do this until we have constructed a trapezoid with an outer normal $u$. It is important to note that at first glance it seems trivial that, by removing a facet that meets $F_u$, we can only increase the cone-volume corresponding to $u$. However, we have to rescale the new polygon in order to have volume equal to $1$. 

Second, we will provide a $\mathcal{H}$-representation, corresponding to the $\mathcal{V}$-representation stated in Theorem \ref{thm:VRepresentationOfclPconePlanar}, which contains $\clPcone$ and compute the vertices of that given $\mathcal{H}$-representation. 

Lastly, we show that the vertices of the $\mathcal{H}$-representation are all contained in $\mathrm{cl}(\Pcone)$ and this will conclude the proof of Theorem \ref{thm:HRepresentationOfclPconePlanar}.

We begin with a monotonicity statement, which implies that the cone-volume for vectors $u \in \Usquare$ is less than or equal to $\frac{1}{2}$. Let $U = \{ u_1,\dots,u_m \} \in \mathcal{U}(2,m)$ be a set of counterclockwise ordered unit vectors and fix a polygon $P = P(U,b)$, $b \in \R_{\geq 0}^{|U|}$ with volume equal to $1$ such that every unit vector in $U$ defines a facet of $P$. Further, assume that for a fixed index $i$, removing the facet defined by the outer normal $u_i$ results in a bounded polyhedron $P'$, i.e., $P'$ is a polygon. Define the polygon $Q \coloneqq \frac{1}{\sqrt{\mathrm{vol} (P')}} P'$. Since the set of outer unit normals of $Q$ is a subset of $U$, there exists a right-hand side $b' \geq 0$ such that $Q = P(U,b')$. If one of the adjacent outer unit normal vectors $u_{\mod(i-1)}$ or $u_{\mod(i+1)}$ is contained in $\Usquare$, the following inequality holds. For the next lemma we write $\gamma(U,b)_u$ for the entry of $\gamma(U,b)$ that corresponds to $u \in U$.

\begin{lemma}
\label{Lemma:DeltingAjacentFacetInceasreConeVolume}
    For the two polygons $P = P(U,b)$ and $Q = P(U,b')$ it holds:
    \begin{enumerate}[label = (\roman*)]
        \item If $u_{mod (i - 1)} \in \Usquare $, it holds $\gamma(U,b')_{u_{mod (i - 1)}} \geq \gamma(U,b)_{u_{mod (i-1)}}$.
        \item If $u_{mod (i +1)} \in \Usquare $, it holds $\gamma(U,b')_{u_{mod (i + 1)}} \geq \gamma(U,b)_{u_{mod (i+1)}}$.
    \end{enumerate}
   
\end{lemma}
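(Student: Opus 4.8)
The plan is to track the single cone-volume entry of $w := u_{\mod(i-1)}$ (case (i); case (ii) is symmetric) under the two operations that build $Q$ from $P$: deleting the facet $F_i$ and rescaling to unit volume. Write $h := b_{\mod(i-1)} = \dist(0,\aff F_{i-1})$ and $a := \vol_1(F_{i-1})$, so that $\gamma(U,b)_w = \tfrac12 a h = \vol(\conv(\{0\}\cup F_{i-1}))$. Let $A$ be the vertex of $P$ shared by $F_{i-1}$ and $F_i$, let $B$ be the vertex shared by $F_i$ and $F_{i+1}$, and let $D$ be the other endpoint of $F_{i-1}$. Since $F_{i-1},F_i,F_{i+1}$ are consecutive facets and $P'$ is bounded, the neighbouring normals cannot satisfy $u_{\mod(i-1)} = -u_{\mod(i+1)}$ (otherwise deleting $u_i$, the only normal strictly between that antipodal pair, would leave an entire open hemisphere of $\mathbb{S}^1$ free of normals and make $P'$ unbounded). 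Hence $\aff F_{i-1}$ and $\aff F_{i+1}$ meet in one point $C$, the new vertex of $P'$, the added region $P'\setminus P$ is exactly the triangle $T := \conv\{A,B,C\}$, and $F_{i-1}$ extends to $F'_{i-1} = \conv\{D,C\} \supseteq F_{i-1}$; this is where boundedness and the $\Usquare$-structure of Lemma \ref{Lemma:StructureUSquare} enter.

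The real content is the rescaling bookkeeping — the naive observation that deleting an adjacent facet lengthens $F_{i-1}$ and hence enlarges its cone-volume disregards that $Q$ is renormalized to volume one. Put $V := \vol(P') = 1 + \vol(T)$ and $t := \vol_1(F'_{i-1}) - a = \vol_1(\conv\{A,C\})$. Since the supporting line $\aff F_{i-1}$ does not move, $\gamma(U,b')_w = \tfrac1V \vol(\conv(\{0\}\cup F'_{i-1})) = \tfrac1V\cdot\tfrac12 (a+t)h$, so (for $h>0$) the desired inequality $\gamma(U,b')_w \geq \gamma(U,b)_w$ is equivalent to $a+t \geq aV$, i.e.\ to $t \geq a\,\vol(T)$.

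This last inequality I would settle by convexity. Taking the base $AC$ of length $t$ on $\aff F_{i-1}$ with apex $B$ gives $\vol(T) = \tfrac12 t\,d$ with $d := \dist(B,\aff F_{i-1})$, while the base $DA$ of length $a$ on the same line with the same apex gives $\tfrac12 a\,d = \vol(\conv\{D,A,B\})$. Because $D,A,B$ are vertices of the convex polygon $P$ of area $\vol(P)=1$, we get $\tfrac12 a\,d \leq 1$, hence $a\,\vol(T) = \tfrac12 a t d \leq t$, as required (the cases $h=0$ and $t=0$ being trivial). Case (ii) is identical, with $F_{i-1}$ and $F_{i+1}$ interchanged.

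The crux, and the main obstacle, is precisely the balance in the second paragraph: the gain factor $t/a$ from lengthening and the loss factor $\vol(T) = V-1$ from renormalization pull in opposite directions, and it is not obvious a priori which dominates. The resolution is that both are controlled by the single triangle $\conv\{D,A,B\} \subseteq P$ of area at most $1$; spotting this common triangle is the whole point. The remaining care — verifying the triangular shape of the added region by excluding parallel neighbours via boundedness, and disposing of the degeneracies $h=0$ and $t=0$ — is routine.
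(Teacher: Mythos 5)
Your proof is correct and follows the same overall strategy as the paper's: identify the region gained by deleting $F_i$ as the triangle $T$ with apex the new vertex $C=\aff F_{i-1}\cap\aff F_{i+1}$, account for the renormalization by $V=1+\vol(T)$, and reduce the claim to the product bound $a\,d\le 2$, where $a$ is the length of $F_{i-1}$ and $d$ the distance from $B$ to $\aff F_{i-1}$ (in the paper's coordinates this is exactly $(a_1-b_1)(a_2-c_2)\le 2$). The genuine difference lies in the certificate for this product bound. The paper inscribes a parallelogram of area $a\,d$ in $P$, and justifying that containment is precisely where the hypothesis $u_{\mod(i-1)}\in\Usquare$ enters via Lemma \ref{Lemma:StructureUSquare}~ii); you instead observe that $\tfrac12 a\,d$ is the area of $\conv\{D,A,B\}$, a triangle spanned by three vertices of $P$ and hence contained in $P$ by convexity alone. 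Your certificate is more elementary, gives the needed $a\,d\le 2$ directly (the paper's gives the stronger $a\,d\le 1$), and --- notably --- never uses $u_{\mod(i-1)}\in\Usquare$, so your argument in fact establishes the monotonicity for an arbitrary neighbouring normal; the only place where you invoke ``the $\Usquare$-structure'' (ruling out $u_{\mod(i-1)}=-u_{\mod(i+1)}$ so that $C$ exists) really only needs the boundedness of $P'$, which is already a hypothesis of the setup. The remaining bookkeeping (the triangular shape of $P'\setminus P$, the degenerate cases $h=0$ and $t=0$) matches the paper and is handled correctly.
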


\textit{Proof.} 
W.l.o.g. we assume that $i = 2$ and $u_1 = e_1 \in \Usquare$. Otherwise, use a rotation $\phi: \R^2 \rightarrow \R^2$, such that $\phi(u_1) = e_1$. Further, notice that there exists no open hemisphere $\omega \subset \mathbb{S}^1$, such that $u_2 \in \omega$, but $u_1$ and $u_3$ are not contained in $\omega$, since otherwise the polyhedron $P'$ is no longer bounded. 

We consider the facets $F_i(b) \subset P = P(U,b)$ for the facets defined by $u_i$, $i = 1,2,3$. We know $F_{1}(b) = \conv(c,a)$ and $F_2(b) = \conv(a,b)$ and $F_3(b) = \conv(b,d)$  for some points $a,b,c,d \in \R^2$. Further, define $ x= (x_1,x_2) \coloneqq  \aff(F_1(b)) \cap \aff (F_3(b))$ and the triangle $\Delta = \conv(a,b,x)$. It holds $P' = P \cup \Delta$ and $\mathrm{vol}(P') = \mathrm{vol}(P) + \mathrm{vol}(\Delta) = 1 + \mathrm{vol}(\Delta )$ and $x_1 = a_1$. We have the following volume formulas 
\begin{align}
    \gamma(U,b)_{u_1} & = \frac{(a_2 - c_2) \cdot a_1}{2} \\
    \gamma(U,b')_{u_1} & = \frac{(x_2 - c_2) \cdot a_1}{2 (1 + \mathrm{vol}(\Delta))} \\  
    \mathrm{vol}(\Delta) & = \frac{(x_2 - a_2) \cdot (a_1 - b_1)}{2}.
\end{align}
We will prove that $\gamma(U,b')_{u_1} \geq \gamma(U,b)_{u_1}$. If $a_1 = 0$ there is nothing to prove. Therefore, assume $a_1 > 0$. The inequality is equivalent to 
\begin{equation}
    \frac{(x_2 - c_2)}{(1 + \mathrm{vol}(\Delta))} \geq (a_2 - c_2),
\end{equation}
which can be transformed into 
\begin{equation}
    (x_2 - c_2) \geq (a_2 - c_2) + \mathrm{vol}(\Delta) \cdot (a_2 - c_2).
\end{equation}
Since $x_2 - c_2 = (x_2 - a_2) + (a_2 - c_2)$ it is enough to prove 
\begin{equation}
    (x_2 - a_2) \geq \mathrm{vol}(\Delta) \cdot (a_2 - c_2),
\end{equation}
or equivalently 
\begin{equation}
    2 \geq (a_1 - b_1) \cdot (a_2 - c_2).
\end{equation}
That is true due to the following observation. The parallelotope 
\begin{equation}
    A \coloneqq H^-(e_1,a_1) \cap H^-(-e_1,b_1) \cap H^-(u_{3},\langle a,u_3 \rangle) \cap H^-(-u_3, \langle c, u_3 \rangle)
\end{equation} is contained in the polytope $P$, Lemma \ref{Lemma:StructureUSquare} ii). Therefore, $\mathrm{vol}(A) \leq \mathrm{vol}(P) \leq 1$ and further, $\mathrm{vol}(A) = (a_1 - b_1) \cdot (a_2 - c_2)$. 
\qed

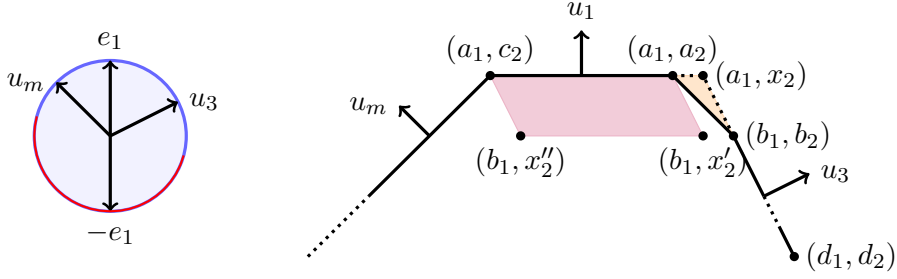
\begin{figure}
    \centering
        
        \begin{tikzpicture}[scale = 1]
        \begin{scope}

        \filldraw[color=blue!60, fill = blue!5, very thick](0,0) circle (1);
        \draw[->,very thick](0,0) -- (0,1) node[above]{$e_1$};
        \draw[->,very thick](0,0) -- (0.89,0.45) node[right]{$u_3$};
        \draw[->,very thick](0,0) -- (0,-1) node[below]{$-e_1$};
        \draw[->,very thick](0,0) -- (-0.71,0.71) node[left]{$u_m$};
        \draw [red,thick,domain=165:345] plot ({cos(\x)}, {sin(\x)});
     \end{scope}

    \begin{scope}[xshift=5cm,scale = 0.8]
    \filldraw[color = purple!30, fill = purple!20] (0.5,0) --  (0,1.0) -- (3,1) -- (3.5,0)  -- cycle;
        \filldraw[color = orange!30, fill = orange!20] (3,1) --  (4,0) -- (3.5,1) -- cycle;
        \filldraw[color=black!100, fill = black!100, very thick](0.5,0) circle (0.05) node[below]{$(b_1,x_2'')$};
        \filldraw[color=black!100, fill = black!100, very thick](3.5,0) circle (0.05) node[below]{$(b_1,x_2')$};
    
        \filldraw[color=black!100, fill = black!100, very thick](3,1) circle (0.05) node[above]{$(a_1,a_2)$};
        \filldraw[color=black!100, fill = black!100, very thick](4,0) circle (0.05) node[right]{$(b_1,b_2)$};
        \filldraw[color=black!100, fill = black!100, very thick](0,1) circle (0.05) node[above]{$(a_1,c_2)$};
        \filldraw[color=black!100, fill = black!100, very thick](3.5,1) circle (0.05) node[right]{$(a_1,x_2)$};
        \filldraw[color=black!100, fill = black!100, very thick](5,-2) circle (0.05) node[right]{$(d_1,d_2)$};
        \draw[->,very thick] (1.5,1) -- (1.5,1.75) node[above]{$u_{1}$};
        \draw[->,very thick] (4.5,-1) -- (5.25,-0.625) node[right]{$u_{3}$};
        \draw[-,very thick](-2,-1) -- (0,1);
        \draw[dotted,very thick](-3,-2) -- (-2,-1);
        \draw[-,very thick](0,1) -- (3,1);
        \draw[-,very thick](3,1) -- (4,0);
        \draw[-,very thick](4,0) -- (4.5,-1);
        \draw[dotted,very thick](3,1) -- (3.5,1);
        \draw[dotted,very thick](4,0) -- (3.5,1);
        \draw[dotted,very thick](4.5,-1) -- (4.75,-1.5);
        \draw[-,very thick](4.75,-1.5) -- (5,-2);
        \draw[->,very thick](-1,0) -- (-1.5,0.5) node[left]{$u_m$};

     \end{scope}
    \end{tikzpicture}
        
    \caption{The setting of Lemma \ref{Lemma:DeltingAjacentFacetInceasreConeVolume}, where we assume that $u_{i-1} = e_1$. On the left is a representation of $U$ drawn into the sphere. The hemisphere 'separating' $u_3$ and $u_m$ is colored in red. On the right we see a picture of the polytope $P$, which is limited by the black lines. The polytope $P'$ is the polytope $P$ togehter with the orange triangle. The red area corresponds to the parallelotope $A$ and the orange area is the triangle $\Delta$. }
    \label{fig:enter-label}
\end{figure}

\begin{corollary}
    \label{corollary:ConeVolumeUSquareLessOneHalf}
    Let $U \in \mathcal{U}(2,m)$ and $u \in \Usquare$. For any $\gamma(U,b)  \in C_\cv(U)$ it holds $\gamma(U,b)_u \leq \frac{1}{2}$.
\end{corollary}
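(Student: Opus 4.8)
The plan is to prove the bound by a deletion argument that reduces an arbitrary polygon having $u$ as a facet to a trapezoid, for which the bound is elementary, while using Lemma \ref{Lemma:DeltingAjacentFacetInceasreConeVolume} to guarantee that the cone-volume at $u$ never decreases along the way. First I would dispose of the trivial case: if $u$ does not define a facet of $P = P(U,b)$ then $\gamma(U,b)_u = 0 \leq \tfrac12$, so I may assume $u$ is active. I would next record that $-u$ must then also be active: by Lemma \ref{Lemma:StructureUSquare} ii) there is an open hemisphere $\omega$ with $U \cap \omega = \{-u\}$, say $\omega = \{v : \langle w,v\rangle > 0\}$; were $-u$ inactive, every active normal $v$ would satisfy $\langle w,v\rangle \leq 0$, so their positive hull would miss $w$, contradicting boundedness of $P$. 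Hence both $F_u$ and $F_{-u}$ are facets, and I set up an induction on the number $N$ of facets of $P$.

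For the base cases, note that $N \geq 4$, since a triangle would force $u \in \Utriangle$, contradicting $u \in \Usquare$. When $N = 4$, Lemma \ref{Lemma:StructureUSquare} shows $P$ is a trapezoid with outer normals $\{u,-u,x,y\}$, and the bound follows by a direct computation (consistent with Propositions \ref{Prop:VRepresentationTrapezoid} and \ref{Prop:VRepresentationParallelogram}, whose vertex lists all have $u$-coordinate at most $\tfrac12$). Placing $u = e_1$, the facet $F_u$ lies on the line $x_1 = s$ and $F_{-u}$ on $x_1 = -t$ with $s,t \geq 0$; I may assume $s > 0$, else $\gamma(U,b)_u = 0$. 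Writing $h_s, h_t$ for the lengths of $F_u, F_{-u}$, the hemisphere property of Lemma \ref{Lemma:StructureUSquare} ii) forces both slanted normals to have nonnegative first coordinate, so the trapezoid narrows towards $F_u$ and $h_s \leq h_t$. Since $P$ has area $\tfrac12(h_s + h_t)(s + t) = 1$ and $\gamma(U,b)_u = \tfrac12 s\,h_s$, the claim $\gamma(U,b)_u \leq \tfrac12$ is equivalent to $s h_s \leq s h_t + t h_s + t h_t$, which holds because $h_s \leq h_t$ gives $s h_s \leq s h_t$.

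For the inductive step $N \geq 5$, the facets $F_u$ and $F_{-u}$ split the cyclic facet sequence into two arcs carrying $N - 2 \geq 3$ facets strictly between them in total, so at least one arc contains two or more such facets. On such an arc I would delete the facet $F$ adjacent to $F_u$. Removing $F$ keeps the polygon bounded, since in angular terms the normal of the next facet on that arc still lies strictly between $u$ and $-u$, so no consecutive gap among the surviving normals reaches $\pi$. Rescaling the enlarged polygon to area $1$ produces $Q = P(U,b')$ with $N - 1$ facets, and since $u \in \Usquare$ is a neighbour of the deleted facet, Lemma \ref{Lemma:DeltingAjacentFacetInceasreConeVolume} gives $\gamma(U,b')_u \geq \gamma(U,b)_u$. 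Applying the induction hypothesis to $Q$ yields $\gamma(U,b')_u \leq \tfrac12$, hence $\gamma(U,b)_u \leq \tfrac12$.

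The main obstacle I anticipate is bookkeeping the hypotheses of Lemma \ref{Lemma:DeltingAjacentFacetInceasreConeVolume} through the induction: that lemma is phrased for a polygon all of whose prescribed normals are facets, whereas after deletions only a subset of $U$ is active. I would handle this by applying the lemma to the current active configuration and verifying that its only structural input, the isolating hemisphere at $u$, descends to every subset of $U$ containing $\pm u$ (if $-u$ lies in such a subset $U'$, then $U' \cap \omega = \{-u\}$ persists). The second delicate point is keeping each intermediate polyhedron bounded, which is precisely why the deletion is carried out on an arc still containing at least two intermediate facets, rather than on an arbitrary neighbour of $F_u$.
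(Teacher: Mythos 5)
Your proposal is correct and follows essentially the same route as the paper: iteratively delete a facet adjacent to $F_u$, invoke Lemma \ref{Lemma:DeltingAjacentFacetInceasreConeVolume} to see that the cone-volume at $u$ cannot decrease after rescaling, and finish with the trapezoid case via Theorem \ref{Theorem:ConeVolumeSetTrapezoids} and Proposition \ref{Prop:VRepresentationTrapezoid}, with your bookkeeping of boundedness and of which normals remain active being somewhat more explicit than the paper's. One small slip in your base case: Lemma \ref{Lemma:StructureUSquare}~ii) does \emph{not} force both slanted normals to have nonnegative first coordinate (one of them may point slightly ``backwards'' while the isolating hemisphere still exists), but the inequality $h_s\le h_t$ you want is nevertheless true --- it follows from the fact that the angular gap between the two slanted normals on the side away from $u$ is at least $\pi$, which is exactly the condition $u\notin\Utriangle$ --- so the conclusion of the base case (which the paper anyway obtains by citing Theorem \ref{Theorem:ConeVolumeSetTrapezoids}) stands.
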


\textit{Proof.} Take a right-hand side $b \in \R_{\geq 0}^m$ and consider the polygon $P = P(U,b), \mathrm{vol}(P)=1$. Assume that $u$ defines a facet of $P$, say $F_u$, otherwise the cone-volume with respect to $u$ would be zero.

If $P$ is a polygon with more than $4$ facets, we delete a facet that is adjacent to $F_u$, i.e. the two facets have a non-trivial intersection. The resulting polyhedron will still be a polygon (cf. Lemma \ref{Lemma:ExtendLinearBasisToPositiveSpanningSet}). Lemma $\ref{Lemma:DeltingAjacentFacetInceasreConeVolume}$ implies that if we can delete an adjacent facet of $F_u$ then the new cone-volume corresponding to $u$ can only increase. We repeat that process until the resulting polygon, say $Q$, has exactly $4$ facets and since $u \in \Usquare$, it is, in fact, a trapezoid. Now the cone-volume of $Q$ corresponding to $u$ is greater or equal than the cone-volume $\gamma(U,b)_u$, that we started with. But since we now deal with a trapezoid, we know the structure of the cone-volume vector of $Q$, in particular, $\gamma(U,b)_u \leq \frac{1}{2}$ (cf. Theorem \ref{Theorem:ConeVolumeSetTrapezoids} and Proposition \ref{Prop:VRepresentationTrapezoid}). 
\qed
\newline 

Next, we study the polytope $K_U$ for a given set $U \in \mathcal{U}(2,m)$, given by the following $\mathcal{H}$-representation:
\begin{align}
    K_U \coloneqq & \left( \bigcap_{i: u_i \in \Utriangle} H^-(e_i,1) \right) \cap \left( \bigcap_{i : u_i \in \Usquare, - u_j = u_i} H^- \left(e_i + \frac{1}{2} e_j, \frac{1}{2} \right) \right) \\
     & \cap \left( \bigcap_{i : u_i \in \Usquare, u_k \in U_{\square,u_i}} H^-(e_i + e_k , 1)  \right) \cap \left( \bigcap_{i = 1}^m H^-(-e_i,0) \right) \\ 
     & \cap H(e_1 + \dots + e_m, 1).
\end{align}

The polytope $K_U$ is, in fact, the convex hull $\clPcone$. We need the following auxiliary statements. 

\begin{lemma}
\label{Lemma:PconeSubsetKU}
    Let $U \in \mathcal{U}(2,m)$. Then it holds 
    \begin{equation}
        \clPcone \subseteq K_U.
    \end{equation}
\end{lemma}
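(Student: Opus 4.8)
The plan is to exploit that $K_U$ is closed and convex, being a finite intersection of closed halfspaces with a single hyperplane, so that it suffices to prove the pointwise inclusion $C_\cv(U)\subseteq K_U$; since $\clPcone=\mathrm{cl}(\conv(C_\cv(U)))$ and $K_U$ is convex and closed, the inclusion then passes to the convex hull and its closure. So I fix a cone-volume vector $\gamma=\gamma(U,b)\in C_\cv(U)$, realised by a polygon $P=P(U,b)$ with $\vol(P)=1$, and check the defining (in)equalities of $K_U$ family by family. Three of them are immediate: membership in $H(e_1+\dots+e_m,1)$ is exactly $\sum_l\gamma_l=\vol(P)=1$; the constraints $H^-(-e_i,0)$ are $\gamma_i\ge 0$; and since the entries are nonnegative with sum $1$, both $\gamma_i\le 1$ (for $u_i\in\Utriangle$) and $\gamma_i+\gamma_k\le 1$ (for $u_i\in\Usquare$, $u_k\in U_{\square,u_i}$) hold trivially. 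Thus the whole content of the lemma sits in the single remaining family.

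The essential inequality is $\gamma_i+\tfrac12\gamma_j\le\tfrac12$ for $u_i\in\Usquare$ and $u_j=-u_i$, equivalently $2\gamma_i+\gamma_j\le\vol(P)=1$. My approach is to normalise the configuration and then read the inequality off a monotone width function. Using the $\mathrm{GL}(2,\R)$-invariance of $C_\cv$ (Lemma \ref{LemmaPsccAndPconeWithLinearMap}) together with the open-hemisphere property of $\Usquare$-vectors (Lemma \ref{Lemma:StructureUSquare} ii), which supplies an open hemisphere $\omega$ with $U\cap\omega=\{-u_i\}$, I would apply a linear map sending $u_i\mapsto e_1$, $u_j\mapsto -e_1$, and the closed complementary arc $\overline{\mathbb{S}^1\setminus\omega}$ into the halfplane $\{v_1\ge 0\}$. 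After this normalisation every normal except $u_j=-e_1$ has nonnegative first coordinate, $P$ lies in the strip $-b_j\le x_1\le b_i$ (with $F_{u_i}$ at $x_1=b_i$, $F_{u_j}$ at $x_1=-b_j$), and $\gamma_i=\tfrac12 b_i\ell_i$, $\gamma_j=\tfrac12 b_j\ell_j$ for the edge lengths $\ell_i,\ell_j$. The crucial consequence is that the vertical chord length $L(t)=\mathrm{length}(P\cap\{x_1=t\})$ is non-increasing on $[-b_j,b_i]$: each upper edge now has a normal in the closed first quadrant and hence slopes weakly downward, while each lower edge slopes weakly upward.

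I would then split $\vol(P)=\int_{-b_j}^{0}L+\int_{0}^{b_i}L$. On $[0,b_i]$ monotonicity gives $L(t)\ge L(b_i)=\ell_i$, whence $\int_0^{b_i}L\ge b_i\ell_i=2\gamma_i$; on $[-b_j,0]$ the cone-triangle $\conv(\{0\}\cup F_{u_j})\subseteq P$ has area $\gamma_j$ and lies in this slab, so $\int_{-b_j}^0 L\ge\gamma_j$. Adding the two bounds yields $2\gamma_i+\gamma_j\le\vol(P)=1$, the desired inequality (the degenerate cases, where $u_i$ or $u_j$ is not a facet normal and the corresponding entry vanishes, are covered by the same estimate). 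The hard part is exactly this family: one must arrange the normalisation correctly and verify the monotonicity of $L$, and it is here that $u_i\in\Usquare$ is indispensable, since for $u_i\in\Utriangle$ the bound is simply false (indeed $\gamma_i$ can equal $1$). An alternative route is to reduce $P$ to a trapezoid via the facet-deletion monotonicity (Lemma \ref{Lemma:DeltingAjacentFacetInceasreConeVolume}, Corollary \ref{corollary:ConeVolumeUSquareLessOneHalf}) and invoke the trapezoid case (Proposition \ref{Prop:VRepresentationTrapezoid}); but that forces one to track how the rescaling to volume one changes the opposite cone-volume $\gamma_j$, a bookkeeping the direct width argument sidesteps. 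Everything else reduces to $\gamma\ge 0$ and $\sum_l\gamma_l=1$.
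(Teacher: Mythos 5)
Your proof is correct and follows essentially the same route as the paper: reduce to the pointwise inclusion $C_\cv(U)\subseteq K_U$ (since $K_U$ is closed and convex), note that only the family $\gamma_i+\tfrac12\gamma_j\le\tfrac12$ for $u_i\in\Usquare$, $u_j=-u_i$ is nontrivial, and prove $2\gamma_i+\gamma_j\le\vol(P)$ by splitting $P$ along the line through the origin parallel to $F_{u_i}$, bounding one side by $2\gamma_i$ and the other by the cone of area $\gamma_j$. Your monotone chord-length estimate $\int_0^{b_i}L\ge b_i\ell_i=2\gamma_i$ is exactly the paper's inscribed parallelogram $A$ of area $2\gamma_i$, and both rest on the same hemisphere property from Lemma \ref{Lemma:StructureUSquare} ii).
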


\textit{Proof.}
It is enough to prove that $\Pcone \subseteq K_U$.
We assume that $U = ( u_1,\dots,u_m )$ is a set tuple of counterclockwise ordered unit vectors.
Consider a polytope $P = P(U,b)$, $b \in \R_{\geq 0}^m$, such that $\mathrm{vol}(P) = 1$. The only nontrivial inclusion is 
\begin{equation}
    \Pcone \subseteq \bigcap_{(i,j) : u_i \in \Usquare, - u_j = u_i} H^- \left(e_i + \frac{1}{2} e_j, \frac{1}{2} \right).
\end{equation}
We assume that $u_1 \in \Usquare$, and $u_1 = e_1$. And there is a $j > 2$ such that $u_j = -e_1$. We write $F_1 = \conv(a,b)$ for the face defined by $e_1$ and $F_2 = \conv(x,y)$ for the face defined by $u_j$. As in Lemma \ref{Lemma:DeltingAjacentFacetInceasreConeVolume}, we conclude that the parallelotope 
\begin{equation}
    A \coloneqq H^-(e_1,a_1) \cap H^-(-e_1,0) \cap H^-(u_{2},\langle a,u_2 \rangle) \cap H^-(-u_2, \langle b, u_2 \rangle)
\end{equation}
is contained in $P$. In addition, $\vol(A) = 2 \cdot \gamma(U,b)_1$. The simplex $\Delta \coloneqq \conv(0,x,y)$ fulfills $\mathrm{vol}(\Delta) = \gamma(U,b)_j$. Since $\mathrm{int}(A) \cap \mathrm{int}(\Delta) = \emptyset$ and also 
$\Delta \cup A \subset P$, we conclude 
\begin{equation}
    2 \cdot \gamma(U,b)_1 + \gamma(U,b)_j = \mathrm{vol}(A \cup \Delta) \leq \mathrm{vol}(P) = 1,
\end{equation}
which is equivalent to 
\begin{equation}
    \gamma(U,b)_1 + \frac{1}{2} \gamma(U,b)_j \leq \frac{1}{2}.
\end{equation}
\qed

Our next goal is to compute the $\mathcal{V}$-representation of $K_U$. 
We use the characterization from Barvinok \cite[Theorem 4.2]{barvinok2002course} stating that a point of a polyhedron is a vertex exactly when the normals of the active constraints at that point span the entire space.

\begin{lemma}
    \label{Lemma:VRepresentationOfKU}
    It holds 
    \begin{equation}
        K_U = \conv\left\{ \bigcup_{i : u_i \in \Utriangle} \{ e_i \} \cup \bigcup_{(i,j) : u_i \in \Usquare, u_j \in U_{\square,u_i} } \left\{ \frac{1}{2}(e_i + e_j) \right\} \right\}.
    \end{equation}
\end{lemma}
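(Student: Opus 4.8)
The plan is to compute the $\mathcal{V}$-representation of the polytope $K_U$ directly from its $\mathcal{H}$-representation, using the Barvinok vertex criterion cited just above the lemma: a point $x \in K_U$ is a vertex if and only if the outer normals of the inequalities active at $x$ span $\R^m$ (intersected with the equality hyperplane $H(e_1 + \dots + e_m, 1)$, so effectively they must span the hyperplane's direction, an $(m-1)$-dimensional space). Since $K_U$ lives inside the simplex slice $\{x \geq 0, \sum_i x_i = 1\}$, I would work throughout modulo the single equality constraint and check that each candidate point satisfies enough tight constraints whose gradients, together with $e_1 + \dots + e_m$, span all of $\R^m$.

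\medskip

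First I would verify that each candidate point is feasible, i.e.\ lies in $K_U$. For the points $e_i$ with $u_i \in \Utriangle$: here $x_i = 1$ and all other coordinates vanish, so every inequality of the form $H^-(e_j, 1)$, $H^-(e_j + \tfrac12 e_{j'}, \tfrac12)$, $H^-(e_j + e_k, 1)$ is satisfied (the only potentially tight one involving $e_i$ is $H^-(e_i,1)$ itself, and since $u_i \in \Utriangle$ no constraint of the more restrictive $\Usquare$-type has $i$ as its leading index), and the nonnegativity constraints $H^-(-e_j,0)$ hold. For the points $\tfrac12(e_i + e_j)$ with $u_i \in \Usquare$, $u_j \in U_{\square,u_i}$: by Lemma~\ref{Lemma:StructureUSquare} we know $-u_i \in U$, say $-u_i = u_\ell$, and the only relevant binding constraint is $H^-(e_i + e_j, 1)$, which holds with equality since $x_i + x_j = \tfrac12 + \tfrac12 = 1$; I would check the half-constraint $H^-(e_i + \tfrac12 e_\ell, \tfrac12)$ gives $\tfrac12 + 0 = \tfrac12$, also tight, and all remaining inequalities are slack or tight but satisfied.

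\medskip

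Next I would confirm each candidate is actually a \emph{vertex} by exhibiting $m-1$ linearly independent active gradients. For $e_i$, the $m-1$ nonnegativity constraints $\{-e_k : k \neq i\}$ are all active (every coordinate except the $i$-th is zero) and are obviously independent, and together with $e_1 + \dots + e_m$ they span $\R^m$; hence $e_i$ is a vertex. For $\tfrac12(e_i + e_j)$, the $m-2$ nonnegativity constraints $\{-e_k : k \neq i, j\}$ are active, and the constraint $e_i + e_j$ (from $H^-(e_i + e_j,1)$) is active; these $m-1$ vectors are independent, and adjoining $e_1 + \dots + e_m$ spans $\R^m$, so this point is a vertex too. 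Conversely — and this is the direction requiring care — I must show there are \emph{no other} vertices. For this I would take an arbitrary vertex $v$ of $K_U$, use the Barvinok criterion to extract $m-1$ independent active gradients, and argue combinatorially (using the structural facts $|\Usquare| \leq 4$ from Corollary~\ref{Corollary:StructureUSquare} and the hemisphere property of Lemma~\ref{Lemma:StructureUSquare}) that the support of $v$ must be either a single index $i$ with $u_i \in \Utriangle$, or exactly two indices $\{i,j\}$ of the trapezoid type; any other active-set configuration either fails to span or forces $v$ to coincide with one of the listed points.

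\medskip

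\textbf{The main obstacle} I anticipate is precisely this converse completeness step: ruling out spurious vertices. The delicate case is when a vertex's support touches $\Usquare$, because the three families of $\Usquare$-constraints ($H^-(e_i + \tfrac12 e_\ell, \tfrac12)$, $H^-(e_i + e_k, 1)$) interact, and one must show that whenever $u_i \in \Usquare$ contributes to the support, the tight constraint pinning the point down is the trapezoid constraint $H^-(e_i + e_k,1)$ rather than a degenerate combination that would produce a coordinate outside the claimed list. I would handle this by case analysis on $|\Usquare| \in \{0,1,2,4\}$ (Remark~\ref{Remark:StrauctureUSquare}), leaning on the fact that each $u \in \Usquare$ has its antipode $-u \in U$ and a unique separating hemisphere, which tightly constrains which pairs $(i,k)$ can simultaneously be active. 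Once every vertex is forced into the claimed set and each claimed point is shown to be a vertex, the $\mathcal{V}$-representation follows by the Minkowski--Weyl theorem.
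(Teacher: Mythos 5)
Your overall strategy coincides with the paper's: verify the candidate points are feasible, then use the Barvinok active-constraint criterion to show that every vertex of $K_U$ is among them. Two issues, however, keep the proposal from being a proof.

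First, your argument that $\tfrac{1}{2}(e_i+e_j)$ is a vertex is wrong as stated. The $m-1$ normals $\{-e_k : k\neq i,j\}\cup\{e_i+e_j\}$ are independent, but adjoining $e_1+\dots+e_m$ does \emph{not} enlarge their span, since $e_1+\dots+e_m=\sum_{k\neq i,j}e_k+(e_i+e_j)$; the resulting span is only $(m-1)$-dimensional and never separates $e_i$ from $e_j$. To certify the vertex you must also use the active constraint $H^-\bigl(e_i+\tfrac12 e_\ell,\tfrac12\bigr)$ with $u_\ell=-u_i$ (which exists because $u_i\in\Usquare$ forces $-u_i\in U$ by Lemma \ref{Lemma:StructureUSquare}); you noticed this constraint is tight in your feasibility check but did not feed it into the spanning argument. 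Note, though, that for the lemma itself this whole step is dispensable: the equality $K_U=\conv\{\dots\}$ only needs feasibility of the candidates plus the statement that every vertex of $K_U$ is a candidate; that the candidates are genuinely vertices is established separately in Corollary \ref{Corollary:VerticesOfPconePlanar}.

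Second, the step that carries the entire content of the lemma --- showing that an arbitrary vertex of $K_U$ has no active-set configuration other than the two listed types --- is only announced, not executed. You correctly identify it as the main obstacle and name the right tools ($|\Usquare|\le 4$, the hemisphere property, case analysis on which constraint families are tight), but the actual elimination of spurious vertices requires concrete arguments. In the paper this is done by first disposing of the parallelogram case, then splitting on whether some $\langle v,e_i\rangle=1$ ($u_i\in\Utriangle$), some $\langle v,e_i+e_k\rangle=1$ ($u_k\in U_{\square,u_i}$), or neither; the last case hinges on a strict-inequality contradiction of the form $1=v_i+\tfrac12 v_j+v_k+\tfrac12 v_l< v_i+v_j+v_k+v_l\le 1$ to force $v_j=0$ and $v_i=\tfrac12$, and on the observation that two tight half-constraints with non-antipodal $\Usquare$-vectors produce a parallelogram pair, so that $u_h\in U_{\square,u_i}$. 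Without some version of this analysis your proposal does not rule out, say, a vertex with support of size two whose coordinates are not $(\tfrac12,\tfrac12)$, or a vertex supported on three indices.
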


\textit{Proof.} 
The inclusion that the right-hand side is contained in $K_U$ is trivial. By a straightforward computation one can check that each element in the set on the right-hand side is contained in $K_U$.

We now show that $K_U$ is contained in the right-hand side, by computing the vertices of $K_U$ with \cite[Theorem 4.2]{barvinok2002course}.
We will consider two different cases. 

First, assume that $\Usquare$ contains two antipodal points, i.e. there is a $u \in \mathbb{S}^1$ such that $\pm u \in \Usquare$. In that case $U$ defines a parallelogram, Remark \ref{Remark:StrauctureUSquare} and we have already established the structure of parallelograms. 

We now assume that $\Usquare$ does not contain antipodal points. We want to compute the vertices of $K_U$ by using \cite[Theorem 4.2]{barvinok2002course}. Therefore, we consider different cases in which inequalities are satisfied. Let $v \in K_U$ be a vertex. 

Case 1: Assume that $v$ satisfies $\langle v, e_i \rangle = 1$ for an index $i$ such that $u_i \in \Utriangle$. Then it is clear that $v = e_i$.

Case 2: Assume that $\langle v, e_i \rangle < 1$ for all indices $i$ such that $u_i \in \Utriangle$. 

Case 2a): The vertex $v$ satisfies $\langle v, e_i + e_k \rangle = 1$ for some indices $i,k$ such that $u_i \in \Usquare$ and $u_k \in U_{\square,u_i}$. Since $v \in K_U$, it holds $v_j = 0$ for all $j \not = i,k$. Since 
\begin{equation}
    \lin\{e_i + e_k, e_1 + \dots + e_m, e_j : j \not = i,k\} \subsetneq \R^m,
\end{equation}
\cite[Theorem 4.2]{barvinok2002course} implies $\frac{1}{2} = \langle v , e_i + \frac{1}{2} e_j \rangle = v_i$ for some index $j \not = k$. Thus, $v = \frac{1}{2} e_i + \frac{1}{2} e_k$. 

Case 2b): The vertex $v$ satisfies $\langle v, e_i + e_k \rangle < 1$ for all indices $i,k$ such that $u_i \in \Usquare$ and $u_k \in U_{\square,u_i}$.
Thus there exist indices $i,j$ such that $u_i \in \Usquare$ and $u_j \in U_{\square,u_i}$ and $\langle v, e_i + \frac{1}{2} e_j \rangle = \frac{1}{2}$. It must hold $v_i > 0$, since we assume $v_k < 1$ for all $k = 1,\dots,m$.

From our assumption about $\Usquare$ it must hold $-u_i \not \in \Usquare$.
We prove that $v_j = 0$. Therefore, assume that $v_j > 0$. Again, \cite[Theorem 4.2]{barvinok2002course} implies the existence of two indices $k,l$ such that $k \neq i$ and $u_k \in \Usquare$ and $u_l \in U_{\square,u_k}$ and $\langle v, e_k + \frac{1}{2} e_l \rangle = \frac{1}{2}$ and w.l.o.g. we assume $v_k \not = 0$.
Since $-u_i \not \in \Usquare$, it holds $u_k \not = - u_i$. However, in this case we would have
\begin{equation}
    1 = v_i + \frac{1}{2} v_j + v_k + \frac{1}{2} v_l < v_i + v_j + v_k + v_l \leq 1. 
\end{equation}
Hence, $v_j = 0$ and $v_i = \frac{1}{2}$.
Since the set of active inequalities on $v$ linearly spans $\R^m$, \cite[Theorem 4.2]{barvinok2002course} again, we know there exist indices $h,g$ such that $u_h \in \Usquare$, $u_g \in U_{\square,u_h}$ and   $\langle v, e_h + \frac{1}{2} e_g \rangle = \frac{1}{2}$. Just as before, we conclude that $v_h = \frac{1}{2}$ and $v_g = 0$. We get $v = \frac{1}{2}(e_i + e_h)$. 

Whats left is to check that $u_h \in U_{\square,u_i}$. But this is clear, since $\{ \pm u_i,\pm u_h \} \in U$, Lemma \ref{Lemma:StructureUSquare}, and the set $\{ \pm u_i,\pm u_h \}$ defines a parallelogram. 

These are also all vertices of $K_U$. Since if, in addition to the cases before, we also assume that the vertex $v$ satisfies $\langle v, e_i + \frac{1}{2} e_j \rangle < \frac{1}{2}$ for all indices $i,j$ such that $u_i \in \Usquare$ and $u_j = -u_i$ we cannot generate new vertices. That is due to the fact that at least two entries of $v$ are not equal to zero, say $v_i,v_j \not = 0$, and thus
\begin{equation}
    \lin\{e_1 + \dots+ e_m , e_s : s \not = i,j \} \subsetneq \R^m.
\end{equation}

\qed

\bigskip
We now prove that the vertices of $K_U$ are contained in $\mathrm{cl}({\Pcone})$.

\begin{lemma}
    \label{Lemma:VRepresentationOfKUcontainedinPcone}
    Let $U \in \mathcal{U}(2,m)$. We have the following subset relation 
    \begin{equation}
        \bigcup_{i : u_i \in \Utriangle} \{ e_i \} \cup \bigcup_{(i,j) : u_i \in \Usquare, u_j \in U_{\square,u_i} } \left\{ \frac{1}{2}(e_i + e_j) \right\}  \subseteq \mathrm{cl}({\Pcone}).
    \end{equation}
\end{lemma}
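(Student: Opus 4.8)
The plan is to show that each point in the union on the left-hand side arises as a limit of cone-volume vectors of genuine polygons $P(U,b)$, so that it lies in $\mathrm{cl}(\Pcone)$. There are two types of points to handle separately, corresponding to the two unions.

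First I would treat the points $e_i$ with $u_i \in \Utriangle$. By definition of $\Utriangle$, there exist $v,w \in U$ with $\pos\{u_i,v,w\} = \R^2$, so $\{u_i,v,w\}$ is a simplicial positive basis. As in the discussion preceding Proposition \ref{Prop:VRepresentationTrapezoid}, this positive basis is the outer normal set of a triangle, and by rescaling to volume $1$ we obtain a polygon $P(U,b)$ whose cone-volume vector is exactly $e_i$ (the entry corresponding to $u_i$ carries the full volume). Hence $e_i \in \Pcone \subseteq \mathrm{cl}(\Pcone)$ directly; no limit is needed here.

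Next I would treat the points $\frac{1}{2}(e_i + e_j)$ with $u_i \in \Usquare$ and $u_j \in U_{\square,u_i}$. By Lemma \ref{Lemma:StructureUSquare}, $u_i \in \Usquare$ forces $-u_i \in U$, and $u_j \in U_{\square,u_i}$ means $\pos\{u_i,-u_i,u_j,u_k\} = \R^2$ for the appropriate partner $u_k$, so $\{u_i,-u_i,u_j,u_k\}$ is the outer normal set of a trapezoid (again Lemma \ref{Lemma:StructureUSquare}). This is precisely the setting of Corollary \ref{Corollary:ConvergenceOfTrapezoidsAgaindsParallelotoptes}: after relabeling so that the opposite pair is $u_1 = -u_3$ and the two transversal normals are $u_2,u_4$, that corollary furnishes a sequence of right-hand sides $b_{n} \in \R_{\geq 0}^4$ whose cone-volume vectors converge to the vertex $\frac{1}{2}(e_i + e_j)$ of $P_\scc$. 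Transporting this sequence back to the full index set of $U$ (setting the remaining coordinates to zero, which corresponds to the other normals not defining facets) yields cone-volume vectors in $\Pcone$ converging to $\frac{1}{2}(e_i + e_j)$, so this point lies in $\mathrm{cl}(\Pcone)$.

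The main obstacle is bookkeeping rather than a genuine analytic difficulty: one must verify that each relevant pair $(u_i,u_j)$ with $u_i \in \Usquare$, $u_j \in U_{\square,u_i}$ really does embed into a four-element subset $\{u_i,-u_i,u_j,u_k\} \subseteq U$ satisfying the hypotheses of Corollary \ref{Corollary:ConvergenceOfTrapezoidsAgaindsParallelotoptes} (in particular that an open hemisphere isolates the appropriate vector), and that the limiting vertex of $P_\scc$ of the four-element problem corresponds to the intended coordinate $\frac{1}{2}(e_i + e_j)$ in the full space rather than one of the other three trapezoid vertices. Both of these are guaranteed by the structural results already proved: Lemma \ref{Lemma:StructureUSquare} supplies the hemisphere and the antipode $-u_i \in U$, and the definition of $U_{\square,u_i}$ identifies $u_j$ as an adjacent normal, so the matching vertex of $P_\scc$ is exactly $\frac{1}{2}(e_i+e_j)$. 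Once these identifications are in place, the two cases together exhaust the left-hand side and establish the claimed inclusion.
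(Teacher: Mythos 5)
Your proposal is correct and follows essentially the same route as the paper: the points $e_i$ for $u_i \in \Utriangle$ are realized exactly by triangles with the origin placed at the vertex where the facets normal to $v$ and $w$ meet (as in Proposition \ref{Prop:VRepresentationTrapezoid}), and the points $\tfrac{1}{2}(e_i+e_j)$ are obtained as limits via Corollary \ref{Corollary:ConvergenceOfTrapezoidsAgaindsParallelotoptes} applied to the trapezoid $\{u_i,-u_i\}\cup U_{\square,u_i}$ supplied by Lemma \ref{Lemma:StructureUSquare}. Your explicit bookkeeping (zero-padding the four-coordinate cone-volume vectors into $\R^m$ and matching the relevant vertices $v_1,v_2$ of the corollary to the pairs $(i,j)$) is carried out in more detail than in the paper, but the argument is the same.
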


\textit{Proof.} 
If $u_i \in \Utriangle$, it is straightforward, consider the case $\{u_2,u_3,u_4 \}$ in Proposition \ref{Prop:VRepresentationTrapezoid}. For indices $i,j$ such that $u_i \in \Usquare$ and $u_j \in U_{\square,u_i}$, Corollary \ref{Corollary:ConvergenceOfTrapezoidsAgaindsParallelotoptes} implies the existence of cone-volume vectors that converge to $\frac{1}{2}(e_i + e_j)$. \qed

\bigskip
We are ready to prove Theorem \ref{thm:VRepresentationOfclPconePlanar} and \ref{thm:HRepresentationOfclPconePlanar}. 
\newline 

\textit{Proof of Theorem \ref{thm:VRepresentationOfclPconePlanar} + \ref{thm:HRepresentationOfclPconePlanar}.}
Lemma \ref{Lemma:PconeSubsetKU} and Lemma \ref{Lemma:VRepresentationOfKU} together imply
\begin{align}
    & \conv\left\{ \bigcup_{i : u_i \in \Utriangle} \{ e_i \} \cup \bigcup_{(i,j) : u_i \in \Usquare, u_j \in U_{\square,u_i} } \left\{ \frac{1}{2}(e_i + e_j) \right\} \right\} \\
     \subseteq & \clPcone \\
     \subseteq & K_U \\
     = &\conv\left\{ \bigcup_{i : u_i \in \Utriangle} \{ e_i \} \cup \bigcup_{(i,j) : u_i \in \Usquare, u_j \in U_{\square,u_i} } \left\{ \frac{1}{2}(e_i + e_j) \right\} \right\}.
\end{align}
\qed 

We were able to provide a $\mathcal{V}$- and $\mathcal{H}$-representation of $\clPcone$. Further, the vertices of $\clPcone$ are precisely the elements that are contained in the $\mathcal{V}-$representation stated in Theorem  \ref{thm:VRepresentationOfclPconePlanar} 

\begin{corollary}
\label{Corollary:VerticesOfPconePlanar}
    The set of vertices of the polytope $\clPcone$ is equal to
    \begin{equation}
         \bigcup_{i : u_i \in \Utriangle} \{ e_i \} \cup \bigcup_{(i,j) : u_i \in \Usquare, u_j \in U_{\square,u_i} } \left\{ \frac{1}{2}(e_i + e_j) \right\}.
    \end{equation}
\end{corollary}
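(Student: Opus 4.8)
The plan is to read the statement off from the vertex computation already performed for $K_U$. By the proof of Theorems \ref{thm:VRepresentationOfclPconePlanar} and \ref{thm:HRepresentationOfclPconePlanar} we have the identity $\clPcone = K_U$, and since the vertices of a polytope are intrinsically its extreme points, it suffices to determine the vertex set of $K_U$. Writing $S$ for the finite set
\[
    \bigcup_{i : u_i \in \Utriangle} \{ e_i \} \cup \bigcup_{(i,j) : u_i \in \Usquare, u_j \in U_{\square,u_i} } \left\{ \tfrac{1}{2}(e_i + e_j) \right\},
\]
Theorem \ref{thm:VRepresentationOfclPconePlanar} already gives $\clPcone = \conv(S)$, so the vertex set is automatically contained in $S$. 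The entire content of the corollary is therefore the reverse inclusion: every point of $S$ is genuinely extreme, i.e.\ none of them lies in the convex hull of the others.

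First I would dispatch the reducible case directly: if $U$ defines a parallelogram then $\Utriangle = \emptyset$, $\Usquare = U$, and the extremality of the four points of $S$ was already exhibited in Proposition \ref{Prop:VRepresentationParallelogram}. For the irreducible case I would apply Barvinok's criterion \cite[Theorem 4.2]{barvinok2002course} at each point of $S$, exactly as in the proof of Lemma \ref{Lemma:VRepresentationOfKU}, but now to certify that the point is a vertex rather than to constrain its form. For $e_i$ with $u_i \in \Utriangle$ the constraint $H^-(e_i,1)$ is active together with the vanishing constraints $\{x_l = 0 : l \neq i\}$ and the normalization $H(e_1+\dots+e_m,1)$, and these normals already span $\R^m$. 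For $\tfrac12(e_i+e_j)$ with $u_i\in\Usquare$ and $u_j\in U_{\square,u_i}$, the active constraints include the antipode inequality $H^-(e_i+\tfrac12 e_{i'},\tfrac12)$ (where $u_{i'}=-u_i$, which exists by Lemma \ref{Lemma:StructureUSquare}), the pair inequality $H^-(e_i+e_j,1)$, the vanishing constraints $\{x_l = 0 : l\neq i,j\}$, and the normalization; a short linear-algebra check shows their normals span $\R^m$, so the point is a vertex.

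Equivalently, and more transparently, one can isolate each point of $S$ by a linear functional over $K_U$. The functional $\langle\,\cdot\,,e_i\rangle$ is uniquely maximized at $e_i$ because every other generator has $i$-th coordinate at most $\tfrac12$. For $\tfrac12(e_i+e_j)$ one first maximizes $x_i$: since $u_i\in\Usquare$, the inequality $H^-(e_i+\tfrac12 e_{i'},\tfrac12)$ together with $x_{i'}\ge 0$ forces $x_i\le\tfrac12$, so the maximum $\tfrac12$ is attained exactly on the edge spanned by the points $\tfrac12(e_i+e_k)$, $u_k\in U_{\square,u_i}$; breaking the tie by then maximizing $x_j$ singles out $\tfrac12(e_i+e_j)$. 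The main obstacle I anticipate is precisely this $\Usquare$ case, where the bookkeeping between the antipode $-u_i$ and the side normals in $U_{\square,u_i}$ must be handled so that the chosen functional (or the collection of active constraints) genuinely separates $\tfrac12(e_i+e_j)$ from all remaining generators. This is exactly the combinatorial structure of $\Usquare$ and $U_{\square,u}$ established in Lemma \ref{Lemma:StructureUSquare} and Corollary \ref{Corollary:StructureUSquare} (in particular $|U_{\square,u_i}|\le 2$), so once those facts are invoked the verification reduces to the routine span/uniqueness checks above and the corollary follows.
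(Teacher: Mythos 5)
Your proof is correct, and at its core it does the same thing as the paper's: both reduce the corollary to showing that each listed generator is an extreme point, and both hinge on the bound $x_i \le \tfrac12$ for coordinates indexed by $\Usquare$. The difference is in how that extremality is certified. The paper invokes Corollary \ref{corollary:ConeVolumeUSquareLessOneHalf} and runs a direct midpoint computation (if $\tfrac{v+w}{2}=\tfrac12(e_i+e_j)$ with $v,w\in\clPcone$, then $v_i=w_i=\tfrac12$, hence $v_j=w_j=\tfrac12$ and $v=w$), whereas you read the same bound off the $\mathcal{H}$-representation $K_U$ (from $x_i+\tfrac12 x_{i'}\le\tfrac12$ and $x_{i'}\ge 0$) and certify vertexhood via Barvinok's active-constraint criterion, equivalently via an exposing lexicographic functional. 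Your route has the small advantage of not re-invoking the polygon-deletion machinery behind Corollary \ref{corollary:ConeVolumeUSquareLessOneHalf}; everything becomes linear programming on $K_U$. One factual slip: the claim $|U_{\square,u_i}|\le 2$ is not established in Lemma \ref{Lemma:StructureUSquare} or Corollary \ref{Corollary:StructureUSquare} (those bound $|\Usquare|$, not $|U_{\square,u}|$), and it is false in general: for $u\in\Usquare$ one has $U_{\square,u}=U\setminus\{\pm u\}$, since $\pos(U)=\R^2$ forces vectors of $U$ strictly on both sides of $\lin(u)$; e.g.\ $U=\{e_1,-e_1,e_2,(1,-1)/\sqrt{2},(2,-1)/\sqrt{5}\}$ has $e_1\in\Usquare$ and $|U_{\square,e_1}|=3$. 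This does not damage your argument: the face $\{x\in K_U : x_i=\tfrac12\}$ equals $\conv\{\tfrac12(e_i+e_k):u_k\in U_{\square,u_i}\}$ whatever its dimension, maximizing $x_j$ over it still isolates $\tfrac12(e_i+e_j)$, and the Barvinok span check never uses the cardinality — but you should drop the parenthetical or prove the correct statement.
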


\textit{Proof.}
For the indices $i$ such that $u_i \in \Utriangle$, it is clear that $e_i$ is a vertex of $\clPcone$. Therefore, consider an index $i$ such that $u_i \in \Usquare$. Let $j$ be an index such that $u_j \in U_{\square,u_i}$. Corollary \ref{corollary:ConeVolumeUSquareLessOneHalf} implies that $\gamma(U,b)_{u_i} \leq \frac{1}{2}$ for any cone-volume vector $\gamma(U,b) \in \clPcone$. Suppose we have two elements $v,w \in \clPcone$ such that 
\begin{equation}
    \frac{v + w}{2} = \frac{1}{2} (e_i + e_j).
\end{equation}
Hence, it must hold $v_{u_i} = w_{u_i} = 1/2$, Corollary \ref{corollary:ConeVolumeUSquareLessOneHalf}. Since, in addition, $|v|_1 = |w|_1 = 1$, it must hold $v_{u_j} =  w_{u_j} = 1/2$ and we are done. 
\qed
\newline

It is remarkable that the extreme points of the set $\clPcone$ correspond to either triangles, the elements contained in $U_\triangle$, or the vertices of $P_\scc(U)$. This behaviour is no longer true for general dimensions.

\begin{example}
    Consider the set $$U = \{u_1,\dots,u_{n+2} \} = \left\{ - e_1, e_1,e_2,\dots,e_n,-\sqrt{n} \cdot (e_1 + \dots + e_n) \right\} \in \mathcal{U}(n,n+2).$$ Since $U$ contains the simplex $V = \{e_1,\dots,e_n,-\sqrt{n} \cdot (e_1 + \dots + e_n) \}$, we have $e_2,\dots,e_{n+2} \in \clPcone$.
    Consider a polytope $P = P(U,b), b \geq 0$ and assume that $-e_1$ defines a facet of $P$, say $F_1(b)$. The set \linebreak $\conv(F_1(b), F_1(b) + b_1 e_1)$ is a subset of $P$ and hence we get the following inequality for the cone-volume vectors $\gamma \in C_{\cv}(U)$,
    \begin{equation*}
        n \cdot \gamma_1 + \gamma_2 \leq 1.
    \end{equation*}
    Using this inequality we get the following $\mathcal{V}$-representation of $\clPcone$:
    \begin{align*}
        \clPcone & = \\ \conv\Bigg\{e_2,\dots,e_{n+2},  \left(\frac{1}{n},\frac{n-1}{n},0,\dots,0\right), & \dots,\left(\frac{1}{n},0,\dots,0,\frac{n-1}{n}\right)\Bigg\}.
    \end{align*}
    Notice that no vertex in this $\mathcal{V}$-representation is a vertex of $P_\scc(U)$ and not every vertex corresponds to a simplex either.
\end{example}
 
We now present an alternative proof of the fact that the cone-volume set coincides with the subspace concentration polytope if and only if the set $U$ defines a parallelogram (cf. \cite[Thm 2.11]{baumbach2025polynomialinequalitiesconevolumespolytopes}), using the $\mathcal{V}-$representation of $\clPcone$. 

\begin{corollary}
    Let $U \in \mathcal{U}(2,m)$. It holds $\Pscc = \Pcone$ if and only if $U = \Usquare$, i.e. $U$ defines a parallelogram. 
\end{corollary}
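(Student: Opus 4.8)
The plan is to prove the two implications separately, using the vertex description of $\clPcone$ from Theorem~\ref{thm:VRepresentationOfclPconePlanar} (equivalently Corollary~\ref{Corollary:VerticesOfPconePlanar}) together with the classical $\mathcal{V}$-representation $P_\scc(U)=\conv\{\tfrac12\sum_{u_i\in B}e_i : B\subseteq U \text{ a linear basis of }\R^2\}$ recalled in the preliminaries. For the easy direction ``$U$ defines a parallelogram $\Rightarrow \Pscc=\Pcone$'', I would observe that if $U=\{\pm u,\pm v\}$ then $\{u,-u\}\in\mathcal{F}(U)$, so $U$ is reducible. Proposition~\ref{PropositionSeparatorsPsccAndConeVolumesDimension} then supplies the \emph{identical} description of $\Pscc$ and $\Pcone$ as the union of $\{x\in\R^4_{\ge0}:x_1+x_2=\tfrac12\}$ and $\{x\in\R^4_{\ge0}:x_3+x_4=\tfrac12\}$, so $\Pscc=\Pcone$ is immediate.

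For the converse I would start from the hypothesis $\Pscc=\Pcone$ and pass to closed convex hulls. Since $\Pscc$ is already a closed convex polytope, $\clPcone=\cl(\conv(\Pcone))=\cl(\conv(\Pscc))=\Pscc$, so the two polytopes coincide and in particular have the same vertex set. The crux is then a coordinate comparison: every generator $\tfrac12(e_i+e_j)$ of $\Pscc$ has all coordinates at most $\tfrac12$, hence every point of $\Pscc$ has all coordinates at most $\tfrac12$, and therefore no $e_i$ lies in $\Pscc$. On the other hand, Theorem~\ref{thm:VRepresentationOfclPconePlanar} exhibits $e_i$ as a vertex of $\clPcone$ for each $u_i\in\Utriangle$. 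Since $\clPcone=\Pscc$, this is only consistent if $\Utriangle=\emptyset$, i.e.\ $U=\Usquare$.

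To finish I would close the loop with the cardinality bookkeeping already available: $\pos(U)=\R^2$ forces $|U|\ge 3$, while Remark~\ref{Remark:StrauctureUSquare} restricts $|\Usquare|$ to $\{0,1,2,4\}$; combined with $U=\Usquare$ this leaves only $|U|=|\Usquare|=4$, and Corollary~\ref{Corollary:StructureUSquare} then identifies $U$ as a parallelogram. Reading the chain of implications backwards recovers the stated equivalence, together with the clause that $U=\Usquare$ is exactly the parallelogram case.

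The only genuinely delicate point is the coordinate bound $e_i\notin\Pscc$, which is what cleanly separates the ``triangle'' vertices $e_i$ of $\clPcone$ from the ``basis'' vertices $\tfrac12(e_i+e_j)$ of $\Pscc$; everything else is the routine reduction $\Pscc=\Pcone\Rightarrow\clPcone=\Pscc$ and the finite-cardinality argument. I expect no real obstacle beyond making that separation precise and citing the structural results for $\Usquare$ correctly.
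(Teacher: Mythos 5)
Your proposal is correct and follows essentially the same route as the paper: both use the $\mathcal{V}$-representation of $\clPcone$ together with the observation that every coordinate of a point in $P_\scc(U)$ is at most $\tfrac12$ to force $\Utriangle=\emptyset$, and then invoke Remark \ref{Remark:StrauctureUSquare} and the cardinality of a positive spanning set to conclude $|\Usquare|=4$. Your treatment is slightly more explicit in two places (the forward direction via reducibility and Proposition \ref{PropositionSeparatorsPsccAndConeVolumesDimension}, and the reduction $\Pscc=\Pcone\Rightarrow\clPcone=\Pscc$), but the substance is identical.
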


\textit{Proof.}
If $U$ defines a parallelogram, we already know $\Pscc = \Pcone$.

Assume now, that $\Pscc = \Pcone$. From Theorem \ref{thm:VRepresentationOfclPconePlanar}, we know that 

    \begin{equation}
        \clPcone = \conv\left\{ \bigcup_{i : u_i \in \Utriangle} \{ e_i \} \cup \bigcup_{(i,j) : u_i \in \Usquare, u_j \in U_{\square,u_i} } \left\{ \frac{1}{2}(e_i + e_j) \right\} \right\}.
    \end{equation}
The set $\Utriangle$ must be empty, since we assume that the cone-volume set equals the subspace concentration polytope and every entry of an element contained in $\Pscc$ is less or equal than $\frac{1}{2}$. Hence, it holds $U = \Usquare$ and thus $\pos(\Usquare) = \R^2$. Therefore, $\Usquare$ contains at least $n + 1 = 3$ elements, \cite[Thoerem 4]{shephard1971diagrams}, and from Remark \ref{Remark:StrauctureUSquare} if follows $|\Usquare| = 4$. Hence, the set $U$ defines a parallelogram. 
\qed

\vspace{\baselineskip}

The vertex representation of the closure of the cone-volume set $C_\cv(U)$ implies a similar version of Theorem \ref{StancuSufficientConditions} i), at least for the convex hull of the cone-volume set. 

\begin{corollary}
\label{Corollary:coneVolumeSetEqualsHyperSimplex}
    For a set $U \in \mathcal{U}(2,m)$ is holds 
    \begin{equation*}
        \clPcone = \conv\{e_1,\dots,e_m\}
    \end{equation*}
    if and only if $U = \Utriangle$.
\end{corollary}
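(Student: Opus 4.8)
The plan is to read off both implications directly from the $\mathcal{V}$-representation of $\clPcone$ established in Theorem \ref{thm:VRepresentationOfclPconePlanar}, together with the coordinate bound of Corollary \ref{corollary:ConeVolumeUSquareLessOneHalf}. Recall that $\Usquare = U \setminus \Utriangle$ by Definition \ref{Definition:OuterNormalsInUsquareAdjacentFacets}, so the hypothesis $U = \Utriangle$ is precisely the condition $\Usquare = \emptyset$; I will phrase the whole argument in terms of whether $\Usquare$ is empty.

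For the forward direction I would assume $U = \Utriangle$, hence $\Usquare = \emptyset$. Then the second family of vertices in Theorem \ref{thm:VRepresentationOfclPconePlanar}, namely $\{\frac{1}{2}(e_i + e_j) : u_i \in \Usquare,\, u_j \in U_{\square,u_i}\}$, is indexed over an empty set and therefore contributes nothing, while the first family $\{e_i : u_i \in \Utriangle\}$ ranges over all of $U$. This immediately yields $\clPcone = \conv\{e_1,\dots,e_m\}$.

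For the reverse direction I would argue by contraposition: assume $U \neq \Utriangle$, so there is an index $i$ with $u_i \in \Usquare$. By Corollary \ref{corollary:ConeVolumeUSquareLessOneHalf}, every cone-volume vector $\gamma(U,b) \in C_\cv(U)$ satisfies $\gamma(U,b)_{u_i} \leq \frac{1}{2}$; since the half-space $\{x \in \R^m : x_i \leq \frac{1}{2}\}$ is closed and convex and contains $C_\cv(U)$, this bound is inherited by the closed convex hull, so every $x \in \clPcone$ has $x_i \leq \frac{1}{2}$. In particular $e_i \notin \clPcone$, whence $\clPcone \subsetneq \conv\{e_1,\dots,e_m\}$ and the two sets differ.

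I do not expect a genuine obstacle here: the entire argument is a bookkeeping consequence of results already proved in this section, and in fact mirrors the reasoning used just above in the characterization $\Pscc = \Pcone \iff U = \Usquare$. The only point requiring a moment's care is the passage from the coordinate bound on individual cone-volume vectors to the same bound on all of $\clPcone$, which holds simply because $x_i \leq \frac{1}{2}$ cuts out a closed convex set.
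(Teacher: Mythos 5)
Your proof is correct and follows essentially the same route as the paper, whose entire proof is that the equivalence is an immediate consequence of the $\mathcal{V}$-representation in Theorem \ref{thm:VRepresentationOfclPconePlanar}. Your reverse direction substitutes the coordinate bound of Corollary \ref{corollary:ConeVolumeUSquareLessOneHalf} for a direct comparison of the vertex sets, but this is only a cosmetic variation within the same framework.
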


\textit{Proof.}
That is an immediate consequence of Theorem \ref{thm:VRepresentationOfclPconePlanar}.
\qed

\vspace{\baselineskip}

Coroallary \ref{Corollary:coneVolumeSetEqualsHyperSimplex} implies a weaker version of Theorem \ref{StancuSufficientConditions} i). Consider a set $U \in \mathcal{U}(2,m)$ that is in general position. It is easy to check that this implies $U = \Utriangle$ and thus the set $\clPcone$ is equal to $\conv(e_1,\dots,e_m)$.

We want to note that Corollary \ref{Corollary:coneVolumeSetEqualsHyperSimplex} can be generalized to arbitrary dimensions by generalizing the definition $\Utriangle$ to any dimension, which will be done in a different paper. However, the generalization of Theorem \ref{thm:VRepresentationOfclPconePlanar} to any dimensions seems non-trivial, since it is not clear what sets we need to consider in comparison to $\Usquare$.

\bibliographystyle{abbrvnat}     
 \bibliography{PhDBib}

@misc{baumbach2025polynomialinequalitiesconevolumespolytopes,
      title={On polynomial inequalities for cone-volumes of polytopes}, 
      author={T. Baumbach and M. Henk},
      year={2025},
      eprint={2506.15370},
      archivePrefix={arXiv},
      primaryClass={math.MG},
      url={https://arxiv.org/abs/2506.15370}, 
      note = {arXiv: 2506.15370 [math.MG]}
}

@Article{Zhu2014,
  author    = {Zhu, G.},
  journal   = {Advances in Mathematics},
  title     = {The logarithmic {M}inkowski problem for polytopes},
  year      = {2014},
  issn      = {0001-8708},
  pages     = {909--931},
  volume    = {262},
  doi       = {10.1016/j.aim.2014.06.004},
  file      = {:Zhu2014 - The Logarithmic Minkowski Problem for Polytopes.pdf:PDF},
  groups    = {Convex Geometry, Lp-Brunn-Minkowski},
  keywords  = {52A40},
  mrnumber  = {3228445},
  owner     = {hugobib},
  timestamp = {2021-01-13},
}

@phdthesis{Subspace_Concentration_of_Geometric_Measures,
  author    = {Pollehn, H.},
  title     = {Subspace Concentration of Geometric Measures},
  address   = {Berlin}, 
  year      = {2019-02}, 
  type      = {{PhD} thesis}
}

@article{cunningham1984testing,
  author    = {Cunningham, W. H.},
  title     = {Testing membership in matroid polyhedra},
  journal   = {Journal of Combinatorial Theory, Series B},
  volume    = {36},
  number    = {2},
  year      = {1984},
  pages     = {161--188},
  issn      = {0095-8956}, 
  doi = {10.1016/0095-8956(84)90023-6}, 
}

@article{chatelain2011matroid,
  author    = {Chatelain, V. and Alfonsín, J. L. R.},
  title     = {Matroid base polytope decomposition},
  journal   = {Advances in Applied Mathematics},
  volume    = {47},
  number    = {1},
  year      = {2011},
  pages     = {158--172},
  issn      = {0196-8858},
  doi       = {10.1016/j.aam.2010.04.005}, 
}

@article{shephard1971diagrams,
  author    = {Shephard, G. C.},
  title     = {Diagrams for positive bases},
  journal   = {Journal of the London Mathematical Society},
  series    = {2}, 
  volume    = {4},
  number    = {1},
  year      = {1971},
  pages     = {165--175},
  doi       = {10.1112/jlms/s2-4.1.165},
}

@book{barvinok2002course,
  author    = {Barvinok, A.},
  title     = {A course in convexity},
  volume    = {54},
  series    = {Graduate Studies in Mathematics},
  publisher = {American Mathematical Soc.},
  year      = {2002},
  isbn      = {978-0-8218-2968-4}, 
  doi       = {10.1090/gsm/054}
}

@book{bochnak2013real, 
  author    = {Bochnak, J. and Coste, M. and Roy, M.-F.},
  title     = {Real algebraic geometry},
  publisher = {Springer},
  year      = {2013},
  isbn      = {978-3-662-03718-8}, 
  doi       = {10.1007/978-3-662-03718-8},
}

@article{boroczky2013logarithmic_JAMS, 
  author    = {Böröczky, K. and Lutwak, E. and Yang, D. and Zhang, G.},
  title     = {The logarithmic Minkowski problem},
  journal   = {Journal of the American Mathematical Society},
  volume    = {26},
  number    = {3},
  year      = {2013},
  pages     = {831--852},
  issn      = {0894-0347}, 
  doi       = {10.1090/S0894-0347-2012-00741-3},
}

@article{stancu2002discrete, 
  author    = {Stancu, A.},
  title     = {The discrete planar {L0-Minkowski} problem},
  journal   = {Advances in Mathematics},
  volume    = {167},
  number    = {1},
  year      = {2002},
  pages     = {160--174},
  issn      = {0001-8708},
  doi = {10.1006/aima.2001.2040},
}

@article{stancu2003number, 
  author    = {Stancu, A.},
  title     = {On the number of solutions to the discrete two-dimensional {L0-Minkowski} problem},
  journal   = {Advances in Mathematics},
  volume    = {180},
  number    = {1},
  year      = {2003},
  pages     = {290--323},
  issn      = {0001-8708}, 
  doi = {10.1016/S0001-8708(03)00005-7},
}

@Article{LiuSunXiong2024b,
  author           = {Liu, Y. and Sun, Q. and Xiong, G.},
  title            = {A matroid polytope approach to sharp affine isoperimetric inequalities for volume decomposition functionals},
  year             = {2024},
  archiveprefix    = {arXiv},
  copyright        = {Creative Commons Attribution 4.0 International},
  creationdate     = {2025-03-25T14:43:28},
  doi              = {10.48550/ARXIV.2404.09152},
  primaryclass     = {math.MG},
  publisher        = {arXiv},
}

@Article{HuangYangZhang2025,
  author           = {Huang, Y. and Yang, D. and Zhzng, G.},
  title            = {Minkowski Problems for Geometric Measures},
  year             = {2025},
  abstract         = {This paper describes the theory of Minkowski problems for geometric measures in convex geometric analysis. The theory goes back to Minkowski and Aleksandrov and has been developed extensively in recent years. The paper surveys classical and new Minkowski problems studied in convex geometry, PDEs, and harmonic analysis, and structured in a conceptual framework of the Brunn-Minkowski theory, its extensions, and related subjects.},
  archiveprefix    = {arXiv},
  copyright        = {arXiv.org perpetual, non-exclusive license},
  creationdate     = {2025-03-25T11:00:00},
  doi              = {10.48550/ARXIV.2502.05427},
  eprint           = {2502.05427},
  file             = {:http\://arxiv.org/pdf/2502.05427v1:PDF},
  groups           = {Lp-Brunn-Minkowski},
  keywords         = {Metric Geometry (math.MG), FOS: Mathematics, 52A38},
  modificationdate = {2025-03-25T11:10:30},
  owner            = {hugobib},
  primaryclass     = {math.MG},
  publisher        = {arXiv},
note = {arXiv: 2502.05427 [math.MG]}
}

@Article{LiuLuSunEtAl2024,
  author           = {Liu, Y. and Lu, X. and Sun, Q. and Xiong, G.},
  journal          = {Pure and Applied Mathematics Quarterly},
title = {The logarithmic {Minkowski} problem in $\mathbb{R}^2$},
  year             = {2024},
  number           = {2},
  pages            = {869--902},
  volume           = {20},
  abstract         = {A necessary condition for the existence of solutions to the logarithmic Minkowski problem in R 2 , which turns to be stronger than the celebrated subspace concentration condition, is given. The sufficient and necessary conditions for the existence of solutions to the logarithmic problem for quadrilaterals, as well as the number of solutions, are fully characterized.},
  creationdate     = {2024-05-06T14:31:35},
  file             = {:LiuLuSunEtAl1227 - The logarithmic Minkowski problem in R 2.pdf:PDF},
  groups           = {Talk Changsha},
  keywords         = {Logarithmic Minkowski problem, cone-volume measure, subspace concentration condition, polytope 2010 Mathematics Subject Classification: 35J60, 52A40},
  modificationdate = {2024-05-06T14:33:40},
  owner            = {hugobib},
doi = {10.4310/PAMQ.2024.v20.n2.a5}
}

@Book{Aigner1997,
  author           = {Aigner, M.},
  publisher        = {Springer-Verlag, Berlin},
  title            = {Combinatorial theory},
  year             = {1997},
  isbn             = {3-540-61787-6},
  note             = {Reprint of the 1979 original},
  series           = {Classics in Mathematics},
  creationdate     = {2023-08-10T15:11:40},
  doi              = {10.1007/978-3-642-59101-3},
  file             = {:Aigner1997 - Combinatorial theory.pdf:PDF},
  groups           = {Matroid},
  keywords         = {05-01 (06-01)},
  modificationdate = {2023-08-10T15:12:05},
  mrnumber         = {1434477},
  owner            = {hugobib},
  pages            = {viii+483},
}

@Article{FeichtnerSturmfels2005,
  author   = {Feichtner, E. and Sturmfels, B.},
  journal  = {Portugaliae Mathematica Nova S\'{e}rie},
  title    = {Matroid polytopes, nested sets and {B}ergman fans},
  year     = {2005},
  issn     = {0032-5155},
  number   = {4},
  pages    = {437--468},
  volume   = {62},
  file     = {:FeichtnerSturmfels2005 - Matroid Polytopes, Nested Sets and Bergman Fans.pdf:PDF},
  groups   = {Matroid},
  keywords = {05B35 (52B20 52B40 52C35)},
  mrnumber = {2191630},
  owner    = {hugobib},
  doi = {10.48550/arXiv.math/0411260}
}

@Misc{BoeroeczkyLutwakYangEtAl2025,
  author           = {B{\"o}r{\"o}czky, K. J. and Lutwak, E. and Yang, D. and Zhang, G.},
  howpublished     = {Preprint, {arXiv}:2502.05430 [math.{MG}] (2025)},
  title            = {The {Logarithmic} {Minkowski} {Problem}},
  year             = {2025},
  arxiv            = {arXiv:2502.05430},
  creationdate     = {2025-03-25T11:24:29},
  doi              = {10.1090/S0894-0347-2012-00741-3},
  groups           = {Lp-Brunn-Minkowski},
  keywords         = {52A40},
  modificationdate = {2025-03-25T11:24:29},
  owner            = {hugobib},
  url              = {https://arxiv.org/abs/2502.05430},
}

@Article{Stancu2016,
  author           = {Stancu, A.},
  journal          = {Advances in Applied Mathematics},
  title            = {{The logarithmic {Minkowski} inequality for non-symmetric convex bodies}},
  year             = {2016},
  issn             = {0196-8858},
  number           = {C},
  pages            = {43--58},
  volume           = {73},
  doi              = {10.1016/j.aam.2015.09.015},
  file             = {:Stancu2016 - The Logarithmic Minkowski Inequality for Non Symmetric Convex Bodies.pdf:PDF},
  groups           = {Lp-Brunn-Minkowski},
  keywords         = {52A40 (52A20)},
  modificationdate = {2021-12-30T21:27:51},
  mrnumber         = {3433500},
  owner            = {hugobib},
  timestamp        = {2021-01-13},
}

@article{BoeroeczkyHenk2016a,
  title    = {Cone-volume measure of general centered convex bodies},
  author   = {B\"{o}r\"{o}czky, K. J. and Henk, M.},
  journal  = {Advances in Mathematics},
  issn     = {0001-8708},
  pages    = {703--721},
  volume   = {286},
  doi      = {10.1016/j.aim.2015.09.021},
  year     = {2016},
}

@Article{LiuSunXiong2021,
  author   = {Liu, Y. and Sun, Q. and Xiong, G.},
  journal  = {Advances in Mathematics},
  title    = {Sharp affine isoperimetric inequalities for the volume decomposition functionals of polytopes},
  year     = {2021},
  issn     = {0001-8708},
  pages    = {107902},
  volume   = {389},
  doi      = {10.1016/j.aim.2021.107902},
  file     = {:LiuSunXiong2021 - Sharp affine isoperimetric inequalities for the volume decomposition functionals of polytopes.pdf:PDF},
  groups   = {Lp-Brunn-Minkowski},
  keywords = {52A40 (52B60)},
  mrnumber = {4289042},
  owner    = {hugobib},
}

@InCollection{Boeroeczky2023,
  author           = {B{\"o}r{\"o}czky, K. J.},
  booktitle        = {Harmonic analysis and convexity},
  publisher        = {Berlin: De Gruyter},
  title            = {The logarithmic {Minkowski} conjecture and the {{\(L_p\)}}-{Minkowski} problem},
  year             = {2023},
  isbn             = {978-3-11-077537-2; 978-3-11-077538-9},
  pages            = {83--118},
  creationdate     = {2025-03-25T11:24:29},
  doi              = {10.1515/9783110775389-003},
  groups           = {Lp-Brunn-Minkowski},
  keywords         = {35J96,52A40},
  modificationdate = {2025-03-25T11:24:29},
  owner            = {hugobib},
  url              = {real.mtak.hu/191895/1/2210.00194v3.pdf},
  zbl              = {1525.35156},
  zbmath           = {7756042},
}

@Article{BoeroeczkyHegedusZhu2016,
  author           = {{B\"or\"oczky} , K. J. and {Hegedus}, P. and {Zhu}, G.},
  journal          = {IMRN. International Mathematics Research Notices},
  title            = {On the discrete logarithmic Minkowski problem},
  year             = {2016},
  issn             = {1073-7928; 1687-0247/e},
  number           = {6},
  pages            = {1807--1838},
  volume           = {2016},
  file             = {:Boeroeczky2016 - On the Discrete Logarithmic Minkowski Problem.pdf:PDF},
  groups           = {Convex Geometry, Lp-Brunn-Minkowski},
  keywords         = {52A20},
  modificationdate = {2021-12-30T18:56:26},
  owner            = {hugobib},
  publisher        = {Oxford University Press, Cary, NC},
  timestamp        = {2021-01-13},
  zbl              = {1345.52002},
  doi              = {10.1093/imrn/rnv189},
}

@Article{ChenLiZhu2019,
  author           = {Chen, S. and Li, Q. and Zhu, G.},
  journal          = {Transactions of the American Mathematical Society},
  title            = {The logarithmic {M}inkowski problem for non-symmetric measures},
  year             = {2019},
  issn             = {0002-9947},
  number           = {4},
  pages            = {2623--2641},
  volume           = {371},
  doi              = {10.1090/tran/7499},
  file             = {:Chen2019a - The Logarithmic Minkowski Problem for Non Symmetric Measures.pdf:PDF},
  groups           = {Convex Geometry, Lp-Brunn-Minkowski},
  keywords         = {52A40 (35J75 35J96)},
  modificationdate = {2021-12-30T18:45:51},
  mrnumber         = {3896091},
  owner            = {hugobib},
  timestamp        = {2021-01-13},
}

\end{document}